\newcolumntype{L}{>{$}l<{$}}
\DeclareMathSymbol{\shortminus}{\mathbin}{AMSa}{"39}
\newtheorem{theorem}{Theorem}[section]
\newtheorem{lemma}[theorem]{Lemma}
\newtheorem{corollary}[theorem]{Corollary}
\newtheorem{proposition}[theorem]{Proposition}
\theoremstyle{definition}  
\newtheorem{definition} [theorem] {Definition} 
\newtheorem{example} [theorem] {Example}
\newtheorem{remark} [theorem] {Remark}
\theoremstyle{definition}
\newcommand{\Z}{{\mathbb{Z}}}
\newenvironment{customthm}[1]
{\innercustomthm}
{\endinnercustomthm}
\newenvironment{customcoro}[1]
{\innercustomcoro}
{\endinnercustomcoro}
\title{On Acylindrical Tree Actions and  Outer Automorphism Group of Baumslag-Solitar Groups}
\author[Som]{Bratati Som}
\author[Wang]{Daxun Wang}
\address[]{B. Som: Department of Mathematics, University at Buffalo}
\email{bratatis@buffalo.edu}
\address[]{D. Wang: Yau Mathematical Sciences Center, Tsinghua University, Beijing, China}
\email{wangdaxun@mail.tsinghua.edu.cn}
\begin{document}

\begin{abstract}
This paper explores acylindrical actions on trees, building on previous works related to the mapping class group and projection complexes. We demonstrate that the quotient action of a $1$-acylindrical action of a group on a tree by an equivariant family of subgroups remains $1$-acylindrical. We establish criteria for ensuring that this quotient action is non-elementary acylindrical, thus preserving the group’s acylindrical hyperbolicity. Additionally, we show that the fundamental group of a graph of groups admits the largest acylindrical action on its Bass-Serre tree under certain conditions. As an application, we analyze the outer automorphism group of non-solvable Baumslag-Solitar groups, we prove its acylindrical hyperbolicity, highlighting the differences between various tree actions and identifying the largest acylindrical action.
\end{abstract}

\maketitle

\begin{section}{Introduction}

The study of \textit{acylindrical action} on a hyperbolic space was initiated by Bowditch in \cite{Bowditchcurve}. Roughly speaking, it is the action whereby the number of elements that simultaneously coarsely stabilize sufficiently distant points is uniformly bounded. The study of acylindrical action on hyperbolic spaces was developed by Osin \cite{acylhypgps} building on the work of Bowditch \cite{Bowditchcurve} and Sela \cite{Selaacyl}. This has proven to be an effective tool to study many important families of groups. In fact, many groups (such as mapping class groups and the outer automorphism group of free groups, as well as most 3-manifold groups, etc.) admit a non-elementary acylindrical action on some hyperbolic spaces. 

In this paper, we are interested in the quotient action of an acylindrical hyperbolic action. Our motivation for studying this quotient action comes from the algebraic counterpart to Thurston's Dehn filling theorem, in relation to relatively hyperbolic groups \cite{Osinrelativehyp}, \cite{Dehnfillinginrel} and, more generally, to acylindrically hyperbolic groups \cite{DGO}.

Specifically, similar to how new manifolds with hyperbolic structures are created from a 3-manifold with an existing hyperbolic structure via Dehn filling, in the context of group theory, taking quotients of an acylindrically hyperbolic group by certain ``special" normal subgroups produces new acylindrically hyperbolic groups. In \cite{DehnFillingDehnTwist}, Dahmani, Hagen, and Sisto showed that quotients of the mapping class group by normal subgroups generated by suitable powers of Dehn twists preserve the acylindricity of the group. In proving this theorem, the authors relied on the fact established by Dahmani \cite{Dahmani} that the curve graph has the structure of a composite projection graph. Later, Clay and Mangahas in \cite{hyperbolicquotient} investigated a scenario where groups act on projection complexes with a sufficient number of WPD hyperbolic elements. They demonstrated the acylindricity of the quotient group by normal closure of subgroup generated by appropriate \textit{``equivariant L-spinning family."} As an application of this construction, Clay and Mangahas present new examples of acylindrically hyperbolic groups obtained from quotients of mapping class groups of orientable surfaces. More generally, for actions on hyperbolic spaces, Dahmani, Guirardel and Osin \cite{DGO} introduced the notion of \textit{very rotating families} which provides a natural framework for developing a geometric version of small cancellation theory. They prove similar quotient results using normal subgroups generated by the rotating families. Even though simplicial trees can be seen as projection complexes naturally (see Section 3.1, \cite{CMM}) as well as $0$-hyperbolic spaces, none of the above mentioned theories apply to trees directly. In this paper, we not only develope a similar theory for actions on trees, but our conditions are more general as we drop both rotating and spinning condition and work with only an equivariant family of subgroups. We recall the definition of an equivariant family of subgroups below:

Let $G$ be a group acting on a tree $T$. For each vertex $v$ of $T$, let $R_v$ be a subgroup of the stabilizer of $v$ in $G$. We say the family of subgroups $\{R_v\}$ is an \textit{equivariant family} of subgroups of $G$ if $gR_vg^{-1}=R_{gv}$, for every $g$ in $G$ and every vertex $v$ of $T$.

We note that, by Bass-Serre theory, any group $G$ acting on a tree without edge inversions induces a \textit{graph of groups} $(\Gamma,\mathcal{G})$ of $G$, which consists of a graph $\Gamma$ together with a group for each vertex and edge of $\Gamma$, and
monomorphisms from each edge group to the adjacent vertex groups. Conversely, any graph of groups $(\Gamma,\mathcal{G})$ has a canonical associated group $\pi_1(\Gamma,\mathcal{G})$, called the \textit{fundamental group of graph of groups}, and a tree $X$, called the \textit{Bass-Serre tree}, such that $\pi_1(\Gamma,\mathcal{G})$ acts on the Bass-Serre tree $X$. The fundamental theorem of Bass-Serre theory says that these processes are in fact mutually inverse. 

We say an action of a group on a tree is an \textit{acylindrical tree action} if it is acylindrical. In \cite{acylactionontrees}, Minasyan and Osin showed that a tree action is acylindrical if and only if it is \textit{$(k,C)$-acylindrical} in the sense that the
pointwise stabilizer of any edge path in the tree of length at least $k$ contains at most $C$ elements. In this paper, we first show that the quotient action of a $(1,C)$-acylindrical tree action by an equivariant family of subgroups is always a $(1,C)$-acylindrical tree action (see Theorem \ref{thm: quotientActionAcyl}). Further, we provide sufficient conditions for an equivariant family of subgroups that ensure the quotient action is \textit{non-elementary acylindrical} and as a consequence the quotient group remains acylindrically hyperbolic.

\begin{customthm}{A}\label{thmA}
\textit{Let $G$ be a group acting on a simplicial tree $T$, and let $N$ be a normal subgroup of $G$. If $N$ is a normal subgroup of $G$ generated by an equivariant family of subgroups $\{R_v\}$, then the quotient space $T/N$ is a tree. Moreover, suppose the action $G\curvearrowright T$ is a $(1,C)$-acylindrical action. Then the quotient action $G/N\curvearrowright T/N$ is a non-elementary acylindrical tree action if $\{R_v\}$ satisfies at least one of the following conditions:}
\begin{enumerate}
    \item \textit{the first Betti number $b_1(\Gamma)\geq 2$ where $\Gamma=T/G$.}
    \item \textit{there are two vertices $\hat{u}$ and $\hat{v}$ in $\Gamma$ such that $\langle G_{\hat{e}},R_{\hat{u}}\rangle\neq G_{\hat{u}}$ and $\langle G_{\hat{f}},R_{\hat{v}}\rangle\neq G_{\hat{v}}$, where $\hat{e}$ and $\hat{f}$ are the edges on the geodesic path $[\hat{u},\hat{v}]$ adjacent to $\hat{u}$ and $\hat{v}$ respectively. Further, there exist two elements $g,h\in G_{\hat{v}}\setminus \langle G_{\hat{f}}, R_{\hat{v}}\rangle$ such that $gh^{-1}\not\in \langle G_{\hat{f}}, R_{\hat{v}}\rangle$.}
    \item \textit{$\Gamma$ contains a immersed circuit. Further, there exists a vertex $\hat{v}$ and an edge $\hat{e}$ adjacent to $\hat{v}$ on the circuit such that $\langle G_{\hat{e}}, R_{\hat{v}}\rangle\neq G_{\hat{v}}$.}
\end{enumerate}
\end{customthm}

Two types of graphs of groups are the simplest interesting examples. One is called an \textit{edge of groups}, where the underlying graph consists of a single edge. The other one is called a \textit{loop of groups}, where the underlying graph consists of a single loop. By Bass-Serre theory, the fundamental group of an edge of groups is a free amalgamated product, while the fundamental group of a loop of groups is a HNN-extension. As a consequence of Theorem \ref{thmA}, we obtain the following two corollaries about quotient actions for these two special cases.

\begin{customcoro}{B}\label{coroB}
\textit{Let $G=A\ast_C B$ be a free amalgamated product where $C$ is finite, and let $T$ be its Bass-Serre tree. If $N$ is generated by an equivariant family of subgroups $\{R_v\}$. Then the quotient action $G/N\curvearrowright T/N$ is a non-elementary acylindrical tree action if $\{R_v\}$ satisfies the following:}
\begin{enumerate}
    \item \textit{$\langle C, R_A\rangle\neq A$ and  $\langle C, R_B\rangle\neq B$; and}
    \item \textit{there exists two different elements $g$ and $h$ in $ B\setminus \langle C, R_B\rangle$ such that $gh^{-1}\not\in \langle C, R_B\rangle$.}
\end{enumerate}
\end{customcoro}

\begin{customcoro}{C}\label{coroC}
\textit{Let $G=A\ast_{H\sim_{\phi} K}$ be an HNN-extension where $H,K$ are finite, and let $T$ be its Bass-Serre tree. If $N$ is generated by an equivariant family of subgroups $\{R_v\}$. Then the quotient action $G/N\curvearrowright T/N$ is a non-elementary acylindrical tree action if $\{R_v\}$ satisfies either $\langle H, R_A \rangle\neq A$ or $\langle K, R_A \rangle\neq A$.}
\end{customcoro}

Note that any given group with an acylindrical action may admit many acylindrical actions on different hyperbolic spaces. It is natural to ask which acylindrical action is the ``best." In \cite{hyperbolicstructure}, Abbott, Balasubramanya, and Osin introduced a partial order on the set of acylindrical actions which allows us to optimize all acylindrical actions. In particular, a larger action in this poset reveals, to some extent, more information about the group. Therefore, it is natural to consider whether a largest action exists in this poset. 
It is proven that every hierarchically hyperbolic group \cite{largestaction} and many graph products \cite{graphproduct} admit a largest acylindrical hyperbolic action. In the second part of this paper, we study existence of such largest acylindrical actions that arise from graphs of groups. In \cite{acylactionontrees}, Minasyan and Osin provided sufficient conditions for the tree action $\pi_1(\Gamma,\mathcal{G})\curvearrowright X$ to be acylindrical. In this paper, we give sufficient conditions for the fundamental group of graph of groups $\pi_1(\Gamma, \mathcal{G})$ to admit a largest acylindrical action.

\begin{customthm}{D}\label{thmD}
\textit{Let $(\Gamma, \mathcal{G})$ be a finite graph of groups, and let $T$ be the Bass-Serre tree of $(\Gamma, \mathcal{G})$. Denote the fundamental group $\pi_1(\Gamma, \mathcal{G})$ by $G$. If the action $G\curvearrowright T$ is acylindrical, and if each vertex group acts elliptically whenever $G$ acts acylindrically on any hyperbolic space $S$, then $G$ admits the largest acylindrical action on $T$.}   
\end{customthm}

We remark that the above theorem shares a similar idea as Proposition 4.13 of \cite{hyperbolicstructure}. However, when restricting to tree actions which are specifically acylindrical, we obtained a proof of existence of a largest such action by using Bass-Serre theory.

Next, as an application of the previously mentioned results along with some additional findings, we concentrate on a specific group that exhibits interesting graph of groups structures: the outer automorphism group of non-solvable Baumslag-Solitar groups. These groups have been studied particularly in \cite{Autpdividesq}, \cite{AutomorphismBSgroups}, \cite{pqnocommonfactors} and \cite{pdoesnotdivideq}. Recall that Baumslag-Solitar groups have the following standard presentation
\[\mbox{BS}(p,q) = \langle x, t \hspace{0.2cm}| \hspace{0.2cm} tx^pt^{-1} = x^q \rangle\]
where $p,q$ are both non-zero integers. For non-solvable Baumslag-Solitar groups neither $|p|$ nor $|q|$ equals 1. In general, acylindricity of the automorphism group Aut$(G)$ has been studied by others with some conditions on $G$. Genevois \cite{oneendacyl} proved that if $G$ is a one-ended hyperbolic group then Aut$(G)$ is acylindrically hyperbolic. Genevois and Horbez \cite{infiniteendacyl} showed that if $G$ is an infinitely-ended finitely generated group then Aut$(G)$ is acylindrically hyperbolic. For the outer automorphism group Out$(G)$, it is known that Out$(F_n)$ for $n \geq 2$ is acylindrically hyperbolic \cite{acylofouterspace}. But none of these cover Out$(\mbox{BS}(p,q))$ as $\mbox{BS}(p,q)$ is neither hyperbolic nor infinite-ended. In \cite{AutomorphismBSgroups}, Clay computed the presentation of Out$(\mbox{BS}(p,q))$ when $p$ properly divides $q$. Further, Clay showed that Out$(\mbox{BS}(p,q))$ admits two natural graph of groups structures $(\Gamma_1,\mathcal{G}_1)$ and $(\Gamma_2,\mathcal{G}_2)$, where the underlying graph $\Gamma_1$ is an infinite ray and the underlying graph $\Gamma_2$ is a single edge. By Bass-Serre theory, these two graph of groups structures correspond to two actions of Out$(\mbox{BS}(p,q))$ on different Bass-Serre trees $X_1$ and $X_2$ respectively. We examine both actions in detail and compare them. The following theorem summarizes the distinctions between these two tree actions.

\begin{customthm}{E}\label{thmE}\textit{Let $G=Out(BS(p,q))$ where $q=pn$ for some $p,|n|>1$. Let $X_1$ be the Bass-Serre tree of the infinite ray of groups $(\Gamma_1,\mathcal{G}_1)$ of $G$ and let $X_2$ be the Bass-Serre tree of the edge of groups $(\Gamma,\mathcal{G})$ of $G$.  Then we have the following results:
\begin{enumerate}
    \item $G \curvearrowright X_1$ is not an acylindrical action while $G \curvearrowright X_2$ is non-elementary acylindrical, thus $G \curvearrowright X_1$ is not equivalent to $G \curvearrowright X_2$ ;
    \item $G \curvearrowright X_1$ and $G \curvearrowright X_2$ are both non-elementary WPD actions;
    \item $G \curvearrowright X_1$ and $G \curvearrowright X_2$ have the same set of hyperbolic elements;
    \item $G \curvearrowright X_2$ is the largest acylindrical hyperbolic action.  
\end{enumerate}}
\end{customthm}

\textbf{Outline of the paper.} In the first part of the paper, we establish acylindricity results for general graphs of groups. In Section \ref{section: BS theory}, we recall the basic notions about Bass-Serre theory. In Section \ref{section: acyl}, we recall the definitions and results about acylindrical actions. In Section \ref{section: quotient action}, we examine quotient actions of groups acting on trees, provide a proof of Theorem \ref{thmA}, and derive Corollaries \ref{coroB} and \ref{coroC}. This leads to the conclusion that the quotient group by normal subgroups generated by an equivariant family of subgroups is acylindrically hyperbolic. Section \ref{section: largest section} discusses the largest acylindrical action and aims to prove another main result of the paper, Theorem \ref{thmD}. 

Next we specifically focuses on Out$(\mbox{BS}(p,q))$ in Section \ref{section: OutBS}. We briefly describe the two graph of groups structures for Out$(\mbox{BS}(p,q))$ from Clay's work in \cite{AutomorphismBSgroups}, along with their corresponding actions on trees, and compare these actions. We demonstrate that Out$(\mbox{BS}(p,q))$ has a non-elementary acylindrical action on a tree, confirming that it is an acylindrically hyperbolic group. As an application of the results from Sections \ref{section: quotient action} and \ref{section: largest section}, we show that Out$(\mbox{BS}(p,q))$ has a largest acylindrical action, and that acylindricity is preserved in quotient actions by certain normal subgroups. We illustrate the quotients of Out$(\mbox{BS}(p,q))$ through Example \ref{example: infinite quotient} and Example \ref{example: finite quotient}. In addition, we prove that the two actions are both non-elementary WPD, providing an alternative proof of the acylindrical hyperbolicity of Out$(\mbox{BS}(p,q))$ when $p$ divides $q$ properly. Finally, at the end of Section \ref{section: OutBS}, we prove our last main result, Theorem \ref{thmE}.

\vspace{0.1in}

\textbf{Acknowledgments.}  
We would like to thank our advisor Johanna Mangahas for her exceptional guidance and encouragement. We would also like to thank Carolyn Abbott, Sahana Balasubramanya, Indira Chatterji, Ashot Minasyan and Wenyuan Yang for helpful conversations, and Nicholas Touikan for carefully reading and commenting on an earlier version of this paper. The first author acknowledges travel support from the Simons
Foundation (965204, JM).
\end{section}

\section{Bass-Serre Theory}\label{section: BS theory}

In this section we recall some basic notions of the Bass-Serre theory. There are two equivalent versions of this theory; details can be found in \cite{tree} and \cite{Bass}. We will primarily work with \cite{tree}. However we will occasionally use tools from \cite{Bass} to obtain more clear and easier proofs.

\subsection{Graph of groups}

\begin{definition}\cite[Definition 2.1.1]{tree}
A \textit{graph} $\Gamma$ consists of a set $V(\Gamma)$, a set $E(\Gamma)$ and two maps
$$E(\Gamma)\rightarrow V(\Gamma)\times V(\Gamma),\ \ \ \ e\mapsto (o(e),t(e))$$
and
$$E(\Gamma)\rightarrow E(\Gamma), \ \ \ \  e\mapsto \overline{e}$$
which satisfy the following condition: for each $e\in E(\Gamma)$ we have $\overline{\overline{e}}=e$, $\overline{e}\neq e$ and $o(e)=t(\overline{e})$. An element $v\in V(\Gamma)$ is called a \textit{vertex} of $\Gamma$; An element $e\in E(\Gamma)$ is called an \textit{oriented edge} of $\Gamma$, and $\overline{e}$ is called the \textit{inverse} edge of $e$. The vertex $o(e)=t(\overline{e})$ is called the \textit{origin} of $e$, and the vertex $t(e)=o(\overline{e})$ is called the \textit{terminus} of $e$. These two vertices are called the \textit{extremities} of $e$.
\end{definition}

\begin{figure}[ht]
    \centering
\begin{tikzpicture}
      \draw (0,2) -- (2.5,2) node[midway, sloped, above] {};
      \draw[->, thick] (1.27,2) -- (1.29,2);

      \node[label={below, yshift=-0.3cm:}] at (1.25,2.4) {$e$};
      \node[label={below, yshift=-0.3cm:}] at (0,1.6) {$o(e)$};
      \node[label={below, yshift=-0.3cm:}] at (2.5,1.6) {$t(e)$};
      \node[label={below, yshift=-0.3cm:}] at (8.5,2) {$f$};
      \node[label={below, yshift=-0.3cm:}] at (5.5,2.5) {$t(f)$};
      \node[label={below, yshift=-0.3cm:}] at (5.5,2) {\rotatebox{90}{$=$}};
      \node[label={below, yshift=-0.3cm:}] at (5.5,1.5) {$o(f)$};

      \tikzset{enclosed/.style={draw, circle, inner sep=0pt, minimum size=.1cm, fill=black}}
      
      \node[enclosed, label={right, yshift=.2cm:}] at (0,2) {};
      \node[enclosed, label={right, yshift=.2cm:}] at (2.5,2) {};
      
      \node[enclosed, label={right, yshift=.2cm:}] at (6,2) {};

      \draw(6, 2) arc(-180:180:1);
      \draw[->, thick] (8,1.98) -- (8,2.02);
\end{tikzpicture}

    \caption{Two examples of graphs. The left graph consists of one edge with different extremities, whereas the right graph consists of one edge with the same extremities.}
    \label{fig: graphs}
\end{figure}
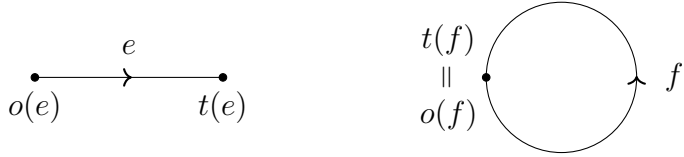

\begin{definition}
 A \textit{graph of groups} $(\Gamma,\mathcal{G})$ consists of a connected graph $\Gamma$, and a family of groups $\mathcal{G}$, together with:
\begin{enumerate}
\item a \textit{vertex group} $G_v\in\mathcal{G}$ for each $v\in V(\Gamma)$;
\item an \textit{edge group} $G_e\in \mathcal{G}$ for each $e\in E(\Gamma)$, such that $G_e=G_{\overline{e}}$; and
\item a monomorphism $\alpha_e: G_e\hookrightarrow G_{t(e)}$ for each $e\in E(\Gamma)$.
\end{enumerate}
Further, we say $(\Gamma,\mathcal{G})$ is a \textit{finite} graph of groups if the graph $\Gamma$ is both locally finite and has finite diameter.
\end{definition}
There are two particular cases of graph of groups, which we will describe in Example \ref{example:edge of groups} and Example \ref{example:loop of groups}. A more general graph of groups can be seen as a combination of these following two types.

\begin{example}\label{example:edge of groups} An \textit{edge of groups} is pictured as follows. 

\begin{center}
\begin{tikzpicture}
      \draw (0,2) -- (2.5,2) node[midway, sloped, above] {};
      \draw[->, thick] (1.27,2) -- (1.29,2);

      \node[label={below, yshift=-0.3cm:}] at (0,2.8) {};
      \node[label={below, yshift=-0.3cm:}] at (1.25,2.4) {$G_e$};
      \node[label={below, yshift=-0.3cm:}] at (0,1.6) {$G_{o(e)}$};
      \node[label={below, yshift=-0.3cm:}] at (2.5,1.6) {$G_{t(e)}$};
      
      \tikzset{enclosed/.style={draw, circle, inner sep=0pt, minimum size=.1cm, fill=black}}
      
      \node[enclosed, label={right, yshift=.2cm:}] at (0,2) {};
      \node[enclosed, label={right, yshift=.2cm:}] at (2.5,2) {};
\end{tikzpicture}
\end{center}

The monomorphisms are $\alpha_e: G_e\hookrightarrow G_{t(e)} $ and $\alpha_{\overline{e}}: G_e\hookrightarrow G_{o(e)}$.
\end{example}

\begin{example}\label{example:loop of groups}
A \textit{loop of groups} is pictured as follows.

\begin{center}
\begin{tikzpicture}
      \node[label={below, yshift=-0.3cm:}] at (8.5,3.2) {};
      \node[label={below, yshift=-0.3cm:}] at (8.5,2) {$G_f$};
      \node[label={below, yshift=-0.3cm:}] at (5.5,2) {$G_v$};

      \tikzset{enclosed/.style={draw, circle, inner sep=0pt, minimum size=.1cm, fill=black}}

      \node[enclosed, label={right, yshift=.2cm:}] at (6,2) {};

      \draw(6, 2) arc(-180:180:1);
      \draw[->, thick] (8,1.98) -- (8,2.02);
\end{tikzpicture}
\end{center}

The monomorphisms are $\alpha_f: G_f\hookrightarrow G_v $ and $\alpha_{\overline{f}}: G_f\hookrightarrow G_v$.

\end{example}

\begin{definition}
Let $(\Gamma,\mathcal{G})$ be a graph of groups. Consider the group $F(\Gamma,\mathcal{G})$ generated by the vertex groups $G_v$ for $v\in V(\Gamma)$ and the elements $e$ of $E(\Gamma)$, subject to the following relations: 
$$\overline{e}=e^{-1}\ \ \ \ \mbox{and}\ \ \ \ e\alpha_e(g)e^{-1}=\alpha_{\overline{e}}(g) \ \ \ \ \mbox{for any}\ e\in E(\Gamma),\ g\in G_e$$
i.e., the group $F(\Gamma,\mathcal{G})$ is the quotient of the free product of all $G_v$ and edges $e\in E(\Gamma)$ by the normal subgroup generated by elements $e\overline{e}$ and $e\alpha_e(g)e^{-1}\alpha_{\overline{e}}(g)^{-1}$.
\end{definition}

\begin{definition}\cite[Definition 5.1.9]{tree} Let $c$ be a path in $\Gamma$. We let $e_1,...,e_n$ denote the edges of $c$ where $n$ is the length of $c$, and put $v_i=o(e_{i+1})=t(e_i)$.
A \textit{word of type $c$} is an element in $F(\Gamma,\mathcal{G})$ of the form
$$g_0e_1g_1e_2...e_ng_n$$
where $g_i\in G_{v_i}$.
\end{definition}

\subsection{Fundamental group of graph of groups}

\begin{definition} Two equivalent definitions of the fundamental group of a graph of groups:
\begin{enumerate}
    \item[(a)] Let $(\Gamma,\mathcal{G})$ be a graph of groups, and let $v_0$ be a vertex of $\Gamma$. The \textit{fundamental group of $(\Gamma,\mathcal{G})$ at $v_0$}, denoted by $\pi_1(\Gamma, v_0)$, is the subgroup of $F(\mathcal{G}, \Gamma)$ consists of words whose type is a circuit based at $v_0$ (i.e. the path $c=(e_1,...,e_n)$ satisfies $o(e_1)=t(e_n)=v_0$).

\vspace{0.075in}
    
    \item[(b)] Let $T$ be a maximal tree of $\Gamma$. By $E(\Gamma\setminus T)$, we denote the set of edges in $E(\Gamma)$ but not in $E(T)$. The \textit{fundamental group of $(\Gamma, \mathcal{G})$ at $T$}, denoted by $\pi_1(\Gamma, T)$, is the group generated by the vertex groups $G_v$ for $v\in V(\Gamma)$ and  edges $e$ of $E(\Gamma\setminus T)$, subject to the following relations:
    $$\overline{e}=e^{-1}\ \ \ \ \mbox{and}\ \ \ \ e\alpha_e(g)e^{-1}=\alpha_{\overline{e}}(g) \ \ \ \ \mbox{for any}\ e\in E(\Gamma\setminus T),\ g\in G_e$$
    In other words, it is the quotient of $F(\Gamma,\mathcal{G})$ by the normal subgroup generated by the set of edges in $T$. Each geometric edge in $\Gamma$ but not in $T$ is called a \textit{stable letter} of $\pi_1(\Gamma, T)$. We note that the number of stable letters is independent of the choice of $T$. This number is called the \textit{first Betti number} of $\Gamma$, and we denote it by $b_1(\Gamma)$ (i.e., $b_1(\Gamma)=\frac{1}{2}|E(\Gamma\setminus T)|$).
\end{enumerate}
\end{definition}
 
We now describe the fundamental groups of the two particular cases of graph of groups that mentioned in Example \ref{example:edge of groups} and Example \ref{example:loop of groups}.

\begin{example}\label{example:free amalgamation}
In this example, we give a presentation of the fundamental group of an edge of groups by using definition (b) above. As pictured in Example \ref{example:edge of groups}, the maximal subtree $T$ of an edge $e$ is the edge itself. So the presentation of the fundamental group is given by
$$\pi_1(\Gamma, T)=\langle G_{o(e)}, G_{t(e)}\hspace{0.1cm} |\hspace{0.1cm}  \alpha_{e}(g)=\alpha_{\overline{e}}(g)\hspace{0.1cm} \mbox{for any}\ g\in G_e\rangle$$
In fact, this fundamental group is the free amalgamated product of the two vertex groups over the edge group, i.e. $\pi_1(\Gamma, T)= G_{o(e)}\ast_{G_e} G_{t(e)}$
\end{example}

\begin{example}\label{example:HNN extension}
Using definition (b) above, we now give a presentation of the fundamental group of a loop of groups. As pictured in Example \ref{example:loop of groups}, the maximal subtree $T$ of a loop $f$ is the vertex $v$. So the presentation of the fundamental group is given by
$$\pi_1(\Gamma, T)=\langle G_v, f\hspace{0.1cm}|\hspace{0.1cm}\overline{f}=f^{-1},\hspace{0.1cm}f\alpha_{e}(g)f^{-1}=\alpha_{\overline{e}}(g)\hspace{0.1cm}\mbox{for any}\ g\in G_f\rangle$$
In fact, this fundamental group is an HNN-extension $G_v\ast_{\alpha_e(G_f)\sim \alpha_{\overline{e}}(G_f)}$ where $f$ is a stable letter.
\end{example}

The following proposition shows that the two definitions above do not depend on which base vertex $v_0$ or maximal subtree $T$ we chose. Moreover, there is a natural projection map $F(\Gamma, \mathcal{G})\rightarrow \pi_1(\Gamma, T)$ given by mapping edges in the maximal tree $T$ to the identity.

\begin{proposition}\cite[Proposition 5.1.20]{tree}
Let $(\Gamma, \mathcal{G})$ be a graph of groups, let $v_0\in V(\Gamma)$ and let $T$ be a maximal tree of $\Gamma$. The natural projection map $F(\Gamma, \mathcal{G})\rightarrow \pi_1(\Gamma, T)$ induces an isomorphism of $\pi_1(\Gamma, v_0)$ onto $\pi_1(\Gamma, T)$.
\end{proposition}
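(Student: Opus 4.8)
The plan is to exhibit the isomorphism explicitly and check it is well-defined in both directions, using the universal (presentation) descriptions of the two groups. First I would recall that $\pi_1(\Gamma, v_0)$ sits inside $F(\Gamma,\mathcal G)$ as the set of elements representable by words of type a circuit based at $v_0$, so the composite $\pi_1(\Gamma,v_0)\hookrightarrow F(\Gamma,\mathcal G)\to \pi_1(\Gamma,T)$ is just the restriction of the natural projection $p\colon F(\Gamma,\mathcal G)\to \pi_1(\Gamma,T)$ (which kills every edge of $T$). Call this restriction $\psi$. To see $\psi$ is surjective: $\pi_1(\Gamma,T)$ is generated by the vertex groups $G_v$ together with the edges $e\in E(\Gamma\setminus T)$; for each such generator I would produce a preimage under $\psi$ by conjugating it along the unique geodesic in $T$ from $v_0$ to the relevant vertex. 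Concretely, for $v\in V(\Gamma)$ let $t_v$ be the word spelling out the $T$-geodesic from $v_0$ to $v$; then for $g\in G_v$ the element $t_v\,g\,t_v^{-1}\in F(\Gamma,\mathcal G)$ has type a circuit based at $v_0$, hence lies in $\pi_1(\Gamma,v_0)$, and $p(t_v\,g\,t_v^{-1})=p(g)$ since $p$ kills $t_v$. A similar conjugation handles each stable letter $e\in E(\Gamma\setminus T)$: replace $e$ by $t_{o(e)}\,e\,t_{t(e)}^{-1}$. So $\psi$ is onto.

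Next I would produce an inverse, which simultaneously gives injectivity. Define a homomorphism $\varphi\colon \pi_1(\Gamma,T)\to \pi_1(\Gamma,v_0)$ on generators by $\varphi|_{G_v}(g)=t_v\,g\,t_v^{-1}$ and $\varphi(e)=t_{o(e)}\,e\,t_{t(e)}^{-1}$ for $e\in E(\Gamma\setminus T)$, with $t_{v_0}$ the empty word. The content here is to check $\varphi$ respects the defining relations of $\pi_1(\Gamma,T)$: the relations internal to each $G_v$ are automatic since conjugation is an automorphism of the free product factor; the relation $\overline e=e^{-1}$ is fine once we note $t_{o(\overline e)}=t_{t(e)}$; and for the relation $e\,\alpha_e(g)\,e^{-1}=\alpha_{\overline e}(g)$ with $e\in E(\Gamma\setminus T)$ and $g\in G_e$, one expands $\varphi(e)\varphi(\alpha_e(g))\varphi(e)^{-1}$, the geodesic words $t_{t(e)}$ cancel telescopically, and the relation $e\,\alpha_e(g)\,e^{-1}=\alpha_{\overline e}(g)$ already holding in $F(\Gamma,\mathcal G)$ delivers $\varphi(\alpha_{\overline e}(g))$. (One also uses that edges of $T$ impose no relations we need to worry about, precisely because $\varphi$ never outputs a bare $T$-edge except inside the $t_v$'s, which come with their inverses.) Then $\psi\circ\varphi=\mathrm{id}$ on generators since $p$ kills each $t_v$, so $\varphi$ is injective; combined with surjectivity of $\psi$ and $\varphi\circ\psi=\mathrm{id}$ on the generating set of $\pi_1(\Gamma,v_0)$ arising from words of type a circuit, both maps are mutually inverse isomorphisms.

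The main obstacle is the bookkeeping in the relation check for $\varphi$: one must be careful that the $T$-geodesic words $t_v$ interact correctly when an edge is traversed, i.e.\ that $t_{t(e)}=t_{o(e)}\cdot e$ up to the relations when $e\in E(T)$, and that this is exactly what makes the conjugating factors telescope for edges both in and out of $T$. The cleanest route is probably to invoke the normal-form / reduced-word machinery for $F(\Gamma,\mathcal G)$ from \cite{tree} (the analogue of Britton's lemma) to guarantee that no unexpected collapses occur and that $\psi$ is genuinely injective on $\pi_1(\Gamma,v_0)$; alternatively, since this is quoted as \cite[Proposition 5.1.20]{tree}, I would simply cite that normal-form theorem for the well-definedness of $\varphi$ rather than re-deriving it. Everything else — surjectivity, and $\psi\varphi=\mathrm{id}$ — is a direct computation with the conjugators.
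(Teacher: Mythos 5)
The paper does not prove this proposition; it is quoted verbatim from the cited reference, so there is no in-paper argument to compare against. Your proposal is the standard textbook proof of that cited result (restrict the projection, conjugate generators along $T$-geodesics $t_v$ to get surjectivity, and build the inverse on generators while checking relations), and it is correct in outline, including the one point that genuinely needs care: that the elements $t_v g t_v^{-1}$ and $t_{o(e)}\,e\,t_{t(e)}^{-1}$ generate $\pi_1(\Gamma,v_0)$, which follows by telescoping $t_{v_i}t_{v_i}^{-1}$ into any word of circuit type.
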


Therefore, for the rest of this paper, we will sometimes use $\pi_1(\Gamma,\mathcal{G})$ to denote the fundamental group of $(\Gamma,\mathcal{G})$.

\subsection{Bass-Serre tree}\label{subsection: construction of BS tree}
Let $(\Gamma, \mathcal{G})$ be a graph of groups, and let $T$ be a maximal tree of $\Gamma$. Set $G=\pi_1(\Gamma, T)$. We can construct a graph $X$ that satisfies the following properties:
\begin{enumerate}
    \item[(a)] there is an action of $G$ on $X$ by left multiplication;
    \item[(b)] the quotient $X/G$ induced by the action of $G$ on $X$ is isomorphic to $\Gamma$; 
    \item[(c)] if $\Tilde{x}$ is a vertex or an edge of $X$, and $\Tilde{y}=g\Tilde{x}$ for some $g\in G$. Then the stabilizer of $\Tilde{y}$ in $G$ is $gG_{q(\Tilde{x})}g^{-1}$, where $q: X\rightarrow \Gamma$ is the quotient map.
    \item[(d)] there exist section maps  $V(\Gamma)\rightarrow V(X)$ and $E(\Gamma)\rightarrow E(X)$ of the quotient map $q$.
\end{enumerate}
In fact, these conditions force the graph $X$ to be as follows: the vertex set $V(X)$ is the disjoint union of left cosets of vertex groups, i.e.
$$V(X)=\bigsqcup_{v\in V(\Gamma)} G/G_{v}$$
Likewise, the edge set $E(X)$ is the disjoint union of left cosets of edge groups, i.e.
$$E(X)=\bigsqcup_{e\in E(\Gamma)} G/G_{e}$$

The graph $X$ is defined by attaching the edges to the vertices in the following way:
$$o(gG_e)=gG_{o(e)}\ \ \ \ \mbox{and}\ \ \ \ t(gG_e)=geG_{t(e)}$$

We remark that $e$ is the identity if $e$ is in the maximal tree $T$. From the construction above, we can see that the quotient $X/G$ has a graph of groups structure that is isomorphic to $(\Gamma, \mathcal{G})$. We conclude this subsection with the following two theorems, known as the fundamental theorems of Bass-Serre theory.

\begin{theorem}\cite[Theorem 5.3.12]{tree}
Let $(\Gamma, \mathcal{G})$ be a graph of groups, and let $T$ be a maximal tree of $\Gamma$. Set $G=\pi_1(\Gamma, T)$. Then the graph $X$ constructed above is a tree, and we call it the \textit{Bass-Serre tree} associated to $(\Gamma, \mathcal{G})$. In particular, $X$ is the universal cover of the quotient graph $\Gamma$.
\end{theorem}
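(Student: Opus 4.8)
The plan is to verify the two defining properties of a tree separately: connectedness and acyclicity (no embedded circuits). Both will be extracted from the algebraic structure of $G = \pi_1(\Gamma, T)$ together with the explicit description of $V(X) = \bigsqcup_{v} G/G_v$ and $E(X) = \bigsqcup_e G/G_e$, with incidence $o(gG_e) = gG_{o(e)}$, $t(gG_e) = geG_{t(e)}$. It is convenient to fix a base vertex $\tilde v_0 = G_{v_0} \in V(X)$ lying over a chosen vertex $v_0$ of the maximal tree $T$, and to use the section maps of (d): for each vertex $v$ of $\Gamma$ choose the unique reduced edge-path in $T$ from $v_0$ to $v$, which lifts to a path in $X$ from $\tilde v_0$ to the ``base lift'' $\tilde v$ of $v$. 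Since $T$ is a tree, these lifts are unambiguous and give a canonical copy of $T$ sitting inside $X$.

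First I would prove \textbf{connectedness}. The subgraph of $X$ spanned by the base lifts $\{\tilde v\}$ together with the lifts of edges of $T$ is isomorphic to $T$, hence connected; call it $\tilde T$. Because $G$ is generated by the vertex groups $G_v$ ($v \in V(\Gamma)$) and the stable letters $e \in E(\Gamma \setminus T)$, it suffices to show that for each such generator $g$, the translate $g\tilde T$ is connected to $\tilde T$ by an edge-path. If $g \in G_v$ then $g$ fixes $\tilde v$, so $g\tilde T$ and $\tilde T$ share the vertex $\tilde v$. If $g = e$ is a stable letter, then the edge $eG_e$ (an edge of $X$ lying over $e$; note $eG_e = e\cdot G_e$ as a coset) has origin $G_{o(e)} = \tilde{o(e)}$ in $\tilde T$ and terminus $eG_{t(e)} = e\tilde{t(e)} \in e\tilde T$, so $\tilde T$ and $e\tilde T$ are joined by this edge. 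A standard induction on word length in these generators then shows every vertex of $X$ (every coset $gG_v$) is connected to $\tilde v_0$.

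Next, \textbf{acyclicity}. Suppose for contradiction that $X$ contains a reduced circuit $\tilde v_0', \tilde e_1, \tilde v_1', \tilde e_2, \dots, \tilde e_n, \tilde v_n' = \tilde v_0'$ with $n \ge 1$ and $\tilde e_{i+1} \ne \overline{\tilde e_i}$. Translating by a group element we may assume $\tilde v_0'$ is the base lift $\tilde v_0$ of $v_0$; each $\tilde e_i$ is a coset $g_{i-1}' e_i' G_{e_i'}$ for a unique edge $e_i'$ of $\Gamma$, and projecting to $\Gamma$ the circuit maps to a closed edge-path $c = (e_1', \dots, e_n')$ at $v_0$ which I do not yet know is reduced in $\Gamma$. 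The point is to read off a \emph{word of type $c$}: tracking the cosets at each vertex, the circuit determines elements $h_i \in G_{v_i'}$ (with $v_i' = o(e_{i+1}') = t(e_i')$) such that the element $w = h_0 e_1' h_1 e_2' \cdots e_n' h_n \in F(\Gamma,\mathcal G)$, whose image in $G = \pi_1(\Gamma,T)$ is trivial because the circuit closes up at $\tilde v_0$. The reducedness of the circuit in $X$ translates precisely into the statement that $w$ is a \emph{reduced} word of type $c$ in the Bass–Serre sense: $\tilde e_{i+1} \ne \overline{\tilde e_i}$ forces that whenever $e_{i+1}' = \overline{e_i'}$ one has $h_i \notin \alpha_{\overline{e_i'}}(G_{e_i'})$. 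I would then invoke the normal-form / Britton's-lemma theorem of Bass–Serre theory (the reduced-word theorem, \cite[Theorem 5.2.7 or equivalent]{tree}): a reduced word of type $c$ with $n \ge 1$ that represents the identity cannot exist; more precisely such a word is nontrivial in $\pi_1(\Gamma,\mathcal G)$, and if in addition $n = 0$ then it just lies in a vertex group. This contradiction shows no reduced circuit exists, so $X$ is a tree.

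The \textbf{main obstacle} is the acyclicity step — specifically, the bookkeeping that converts a reduced loop in $X$ into a genuinely reduced word of type $c$, handling the case where the projected path $c$ backtracks in $\Gamma$ (i.e. $e_{i+1}' = \overline{e_i'}$) and verifying that non-backtracking in $X$ is exactly the coset condition $h_i \notin \alpha_{\overline{e_i'}}(G_{e_i'})$ that the reduced-word theorem requires. Once that dictionary is set up cleanly, the result is immediate from the normal form theorem; connectedness is routine by comparison. The final sentence, that $X$ is the universal cover of $\Gamma$, follows since $X$ is a tree on which $G$ acts with quotient $\Gamma$ and the edge/vertex stabilizers inject appropriately, so $X \to \Gamma$ is the covering associated to the trivial subgroup of the graph-of-groups fundamental group.
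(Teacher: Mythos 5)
The paper does not prove this statement; it is imported verbatim as a cited background result (Theorem 5.3.12 of the reference on trees), so there is no in-paper proof to compare against. Your argument is the standard one from that source: connectedness via the lifted maximal tree $\tilde T$ and induction over the generators of $\pi_1(\Gamma,T)$, and acyclicity by converting a reduced circuit in $X$ into a reduced word of type $c$ representing the identity and invoking the normal form (reduced word) theorem. That strategy is correct and complete in outline. Two small indexing slips are worth fixing: by the incidence formulas $o(gG_e)=gG_{o(e)}$, $t(gG_e)=geG_{t(e)}$, the edge joining $\tilde T$ to $e\tilde T$ is the coset $G_e$ itself (with $g=1$), not $eG_e$, whose origin is $eG_{o(e)}$; and in the reducedness condition the element $h_i$ lies in $G_{t(e_i')}$, so the subgroup it must avoid is $\alpha_{e_i'}(G_{e_i'})\leq G_{t(e_i')}$ rather than $\alpha_{\overline{e_i'}}(G_{e_i'})$, which sits in $G_{o(e_i')}$. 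Neither slip affects the structure of the proof.
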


\begin{theorem}\cite[Theorem 5.3.13]{tree}
Let $G$ be a group which acts on a tree $X$ without edge inversions (i.e. $ge\neq \bar{e}$ for any $g\in G$ and $e\in E(X)$). Then $G$ can be identified with the fundamental group of a certain graph of groups $(\Gamma, \mathcal{G})$, where $\Gamma$ is the quotient graph $X/G$. Moreover, there is an $G$-equivariant isomorphism of the Bass-Serre tree associated to $(\Gamma,\mathcal{G})$ to $X$.
\end{theorem}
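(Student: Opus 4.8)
The plan is to reconstruct the graph of groups $(\Gamma,\mathcal G)$ directly from the action $G\curvearrowright X$, produce a comparison homomorphism $\varphi\colon\pi_1(\Gamma,\mathcal G)\to G$ together with a compatible map $\Phi$ from the associated Bass--Serre tree $\widehat X$ to $X$, and show $\varphi$ is an isomorphism; the map $\Phi$ then becomes automatically the asserted $G$-equivariant isomorphism. First I would fix $\Gamma=X/G$ with quotient map $q\colon X\to\Gamma$, choose a maximal subtree $\Gamma_0$ of $\Gamma$, and choose lifts: a subtree $\widetilde{\Gamma_0}\subseteq X$ mapped isomorphically onto $\Gamma_0$ by $q$ (obtained by extending along $\Gamma_0$, using connectedness of $X$; the no-edge-inversion hypothesis is what makes $q$ a well-behaved morphism of graphs here), a lift $\tilde v\in V(X)$ of each $v\in V(\Gamma)$ with $\tilde v\in\widetilde{\Gamma_0}$ when $v\in\Gamma_0$, and a lift $\tilde e\in E(X)$ of each $e\in E(\Gamma)$ with $\tilde e\in\widetilde{\Gamma_0}$ when $e\in\Gamma_0$, arranged so that $\overline{\tilde e}=\widetilde{\overline e}$ and $o(\tilde e)=\widetilde{o(e)}$. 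Put $G_v=\Stab_G(\tilde v)$ and $G_e=\Stab_G(\tilde e)$; then $G_e=G_{\overline e}$, and $G_e\le G_{o(e)}$ gives a monomorphism $\alpha_{\overline e}\colon G_e\hookrightarrow G_{o(e)}$. Since $t(\tilde e)$ and $\widetilde{t(e)}$ share a $G$-orbit, pick $g_e\in G$ with $t(\tilde e)=g_e\,\widetilde{t(e)}$, taking $g_e=1$ when $e\in\Gamma_0$; then $\Stab_G(t(\tilde e))=g_eG_{t(e)}g_e^{-1}$, so $h\mapsto g_e^{-1}hg_e$ is a monomorphism $\alpha_e\colon G_e\hookrightarrow G_{t(e)}$. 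This defines $(\Gamma,\mathcal G)$, and reading it off along $q$ shows that $X/G$ carries exactly this graph of groups structure.

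Next, using definition (b) of $\pi_1$ relative to $\Gamma_0$, I would set $\varphi\colon\pi_1(\Gamma,\Gamma_0)\to G$ to be the inclusions $G_v\hookrightarrow G$ on vertex groups and $e\mapsto g_e$ on stable letters $e\notin\Gamma_0$; the defining relation $e\,\alpha_e(h)\,e^{-1}=\alpha_{\overline e}(h)$ maps to the identity $g_e(g_e^{-1}hg_e)g_e^{-1}=h$ in $G$, so $\varphi$ is well defined. In parallel I would define $\Phi\colon\widehat X\to X$ (where $V(\widehat X)=\bigsqcup_v\pi_1/G_v$ and $E(\widehat X)=\bigsqcup_e\pi_1/G_e$) by $\Phi(wG_v)=\varphi(w)\tilde v$ and $\Phi(wG_e)=\varphi(w)\tilde e$; this is well defined because stabilizers go to stabilizers, and it is a graph morphism since $o(wG_e)=wG_{o(e)}\mapsto\varphi(w)o(\tilde e)$ and $t(wG_e)=weG_{t(e)}\mapsto\varphi(w)g_e\widetilde{t(e)}=\varphi(w)t(\tilde e)$ (with $e=1$ when $e\in\Gamma_0$); it is $\varphi$-equivariant by construction. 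For surjectivity of $\varphi$: given $g\in G$, pick a path in $X$ from the base lift $\tilde v_0$ to $g\tilde v_0$, project it to a closed path $c=(e_1,\dots,e_n)$ at $v_0$ in $\Gamma$, and read off along this path a word $w$ of type $c$; unwinding the lifts gives $\varphi(w)\tilde v_0=g\tilde v_0$, so $g\in\varphi(w)G_{v_0}\subseteq\im\varphi$. Surjectivity of $\Phi$ on vertices and edges is then immediate from $\Gamma=X/G$, since every $G$-orbit contains a chosen representative $\tilde v$ or $\tilde e$.

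The main obstacle is injectivity of $\varphi$, which I would obtain via the Bass--Serre normal form. Every element of $\pi_1(\Gamma,\Gamma_0)$ is represented by a word $w=h_0e_1h_1\cdots e_nh_n$ of type a closed path $c=(e_1,\dots,e_n)$, which we may take \emph{reduced}: no subword $e_ih_ie_{i+1}$ with $e_{i+1}=\overline{e_i}$ and $h_i\in\alpha_{\overline{e_i}}(G_{e_i})$, and, when $n=0$, $w=h_0\neq 1$. The key lemma to prove is that the image edge-path in $X$ starting at $\tilde v_0$, whose $i$-th edge is $\varphi(h_0e_1\cdots e_{i-1}h_{i-1})\,\tilde e_i$, is \emph{non-backtracking}: a backtrack at step $i$ would mean that this edge and the next are inverses of one another, and chasing the definitions of $\tilde e_i$, $\tilde e_{i+1}$, $g_{e_i}$ through the stabilizer relations shows this is equivalent to precisely the forbidden reduction in $w$. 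Granting the lemma, since $X$ is a tree, a non-backtracking edge-path of length $n\ge 1$ has distinct endpoints, so $\varphi(w)\tilde v_0\neq\tilde v_0$ and $\varphi(w)\neq 1$; when $n=0$, $\varphi(w)=h_0\neq 1$ in $G$. Hence $\varphi$ is injective, so an isomorphism; then $\Phi$ is a bijective morphism between trees, hence a graph isomorphism, and, transporting the $\pi_1$-action on $\widehat X$ to the genuine $G$-action along $\varphi$, it is the claimed $G$-equivariant isomorphism $\widehat X\iso X$. I expect this non-backtracking lemma to be the only genuinely delicate step: it is exactly where the tree structure of $X$ (the absence of circuits) is used to pass back and forth between the combinatorics of reduced words and the geometry of paths, while everything else is bookkeeping with the coset descriptions of $V(\widehat X)$ and $E(\widehat X)$.
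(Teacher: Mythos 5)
The paper offers no proof of this statement---it is quoted verbatim from the cited reference \cite[Theorem 5.3.13]{tree}---and your argument is essentially the standard proof found there: lift a maximal subtree, take vertex and edge groups to be stabilizers of chosen lifts, build the comparison homomorphism $\varphi$ and the coset-level tree map $\Phi$, deduce surjectivity from connectedness of $X$ and injectivity from the non-backtracking lemma for reduced words, which is indeed the one delicate step. The only wrinkle is your requirement that simultaneously $\overline{\tilde e}=\widetilde{\overline e}$ and $o(\tilde e)=\widetilde{o(e)}$ hold for \emph{all} edges, which is overdetermined outside the maximal tree (together they would force $t(\tilde e)=\widetilde{t(e)}$, i.e.\ a lift of all of $\Gamma$); the standard fix, already implicit in your subsequent use of the elements $g_e$, is to impose the origin condition only for one edge of each pair $\{e,\overline e\}$ and define the lift of the reverse edge as the reverse of the lift.
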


\subsection{Action on the Bass-Serre tree}  
\begin{definition}\cite[Definition 2.4]{Foresterdeformation}
Let $G$ be a group acting on a tree $X$. An element $\gamma\in G$ is \textit{elliptic} if it has a fixed point, and \textit{hyperbolic} otherwise. We define the \textit{length function} of $X$ by 
$$\ell_{X}(\gamma)=\underset{x\in V(X)}{\mbox{min}}d(x,\gamma x)$$
Thus $\ell_X(\gamma)=0$ if and only if $\gamma$ is elliptic. If $\gamma$ is hyperbolic, then the set
$$\{x\in X\hspace{0.1cm}| \hspace{0.1cm}d(x,\gamma x)=\ell_X(\gamma)\}$$
is a $\gamma$-invariant linear subtree, called the \textit{axis} of $\gamma$. We denote it by $\mbox{Axis}(\gamma)$. The action of $\gamma$ on $\mbox{Axis}(\gamma)$ is by a translation of amplitude $\ell_X(\gamma)$.
\end{definition}

\begin{definition}\cite[Definition 2.10]{acylactionontrees}
Let $G$ be a group acting on a tree $X$. We say $G$ is acting \textit{elliptically}, or that $G$ is \textit{elliptic}, if the action of $G$ on $X$ has a fixed point.
\end{definition}

\section{Acylindrical tree actions}\label{section: acyl}
In this section, we recall definitions and results about acylindrical actions that we will require throughout the paper. For more details we refer to \cite{Osin}.
   
\begin{definition}\label{acyldef}
Let $G$ be a group acting on a hyperbolic space $S$ by isometry. The action is called \textit{acylindrical} if for every $\varepsilon > 0$ there exist $R, N > 0$ such that for every two points $x, y$ in $S$ with $d(x, y) \geq R$, the set
\[\{g \in G\hspace{0.1cm}|\hspace{0.1cm} d(x, gx) \leq \varepsilon,\hspace{0.1cm}  d(y, gy) \leq \varepsilon \}\]
contains at most $N$ elements.
\end{definition}

When we consider acylindrical actions on trees in particular, we use the definition \ref{acyltreeactdefn} below, but we note that there is at least one other commonly used defition for $(k,C)$-acylindrical actions on trees (see \cite{Selaacyl}, \cite{Weidmannkcacylindrical}, \cite{JSJdecomposition}), where the parameter $k$ differs by $1$ from the following definition. 

\begin{definition}\label{acyltreeactdefn}
Let $G$ be a group acting on a simplicial tree $T$ and let $k \geq 0$ and $C > 0$ be integers. We say that the action is $(k, C)$-\textit{acylindrical} if the pointwise stabilizer of any edge path in $T$ of length at least $k$ contains at most $C$ elements.
\end{definition}

The following theorem is due to Minasyan and Osin \cite{acylactionontrees}, which we include without proof. This theorem shows that the above two definitions are, in fact, equivalent.

\begin{theorem}\cite[Lemma 4.2]{acylactionontrees}\label{thm: acyl=kcacyl}
Let $G$ be a group acting on a simplicial tree $T$ by isometries. This action is acylindrical if and only if there exist constants $k \geq 0$ and $C \geq 1$ such that the action of $G$ on $T$ is $(k, C)$-acylindrical.
\end{theorem}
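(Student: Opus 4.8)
The plan is to prove both implications of Theorem~\ref{thm: acyl=kcacyl} by unwinding the two definitions and translating between the metric language of Definition~\ref{acyldef} and the combinatorial language of Weidmann's $(k,C)$-acylindricity. Throughout, I equip the simplicial tree $T$ with the path metric in which every edge has length $1$, so that $d(x,y)$ for vertices $x,y$ is the number of edges on the geodesic $[x,y]$, and I will restrict attention to vertices (the general case follows since any point of $S=T$ lies within distance $1$ of a vertex, up to adjusting constants).

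For the direction ``$(k,C)$-acylindrical $\Rightarrow$ acylindrical'', suppose the action is $(k,C)$-acylindrical. Given $\varepsilon>0$, I would set $R=k+2\lceil\varepsilon\rceil+2$ (or a similar explicit bound) and $N=C\cdot M$, where $M$ bounds the number of elements $g$ with $d(x,gx)\le\varepsilon$ that can differ by an element fixing a long path — more precisely, if $d(x,gx)\le\varepsilon$ and $d(y,gy)\le\varepsilon$ with $d(x,y)\ge R$, then $g$ moves the midpoint-region of $[x,y]$ a bounded amount, so $g$ maps a subsegment $\sigma$ of $[x,y]$ of length $\ge k$ to a segment Hausdorff-close to $\sigma$; the key combinatorial fact on trees is that finitely many (at most some $M=M(\varepsilon)$) translates of $\sigma$ of the correct length lie within distance $\varepsilon$ of $\sigma$, and any two $g_1,g_2$ in our set with $g_1\sigma=g_2\sigma$ as edge-paths satisfy $g_2^{-1}g_1\in\mathrm{Stab}(\sigma)$, which has size $\le C$. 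Hence the set has at most $CM$ elements.

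For the converse, ``acylindrical $\Rightarrow$ $(k,C)$-acylindrical'', I would apply the acylindricity hypothesis with $\varepsilon=1$ (say), obtaining $R,N>0$; then set $k=\lceil R\rceil$ and $C=N$. If $p$ is an edge path of length $\ge k$ with endpoints $x,y$, then any $g$ in the pointwise stabilizer of $p$ fixes $x$ and $y$, so in particular $d(x,gx)=d(y,gy)=0\le\varepsilon$ while $d(x,y)\ge k\ge R$; by acylindricity this stabilizer has at most $N=C$ elements, as desired. This direction is essentially immediate.

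The main obstacle is the bookkeeping in the first direction: one must show carefully that on a tree, the condition ``$g$ moves both $x$ and $y$ by at most $\varepsilon$ and $d(x,y)$ is large'' forces $g$ to nearly stabilize a definite-length subsegment of $[x,y]$, and then count the finitely many combinatorial possibilities for the image of that subsegment. This uses the tree structure decisively — geodesics are unique, the fixed-point set of an elliptic element is a subtree, and two geodesics that fellow-travel must share a long common subpath — so the estimate on $M(\varepsilon)$ (roughly, the valence is irrelevant and one gets $M$ depending only on $\varepsilon$, using that a path of length $k$ inside a thin region of width $\le 2\varepsilon$ around $[x,y]$ has boundedly many translates) is where the real content lies. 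I expect the cleanest writeup handles vertices and the path metric directly and then notes that passing to an arbitrary hyperbolic-space realization or to non-vertex points only changes constants additively.
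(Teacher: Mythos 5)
The paper does not actually prove this statement: it is quoted verbatim from Minasyan--Osin \cite[Lemma 4.2]{acylactionontrees} and explicitly included ``without proof,'' so there is no in-paper argument to compare against. Judged on its own, your proposal is essentially the standard proof of this equivalence (and, as far as the route goes, the same one as in the cited reference). The converse direction is indeed immediate exactly as you write it: apply Definition \ref{acyldef} with $\varepsilon=1$, take $k=\lceil R\rceil$ and $C=\max(N,1)$, and observe that pointwise stabilizing a path of length $\geq k$ in particular fixes its endpoints. For the forward direction your outline contains the two genuinely necessary ingredients: (i) in a tree, $[x,y]\subseteq [gx,gy]\cup B(x,\varepsilon)\cup B(y,\varepsilon)$, so the central subsegment $\tau$ of $[x,y]$ of length $k$ (take $R\geq k+4\varepsilon+2$, say) satisfies $\tau\cup g\tau\subseteq [gx,gy]\cap[x,y]$, whence $g\tau$ lies \emph{on} the geodesic $[x,y]$ itself and is a translate of $\tau$ by at most $2\varepsilon$ along it --- this is why $M(\varepsilon)=O(\varepsilon)$ is independent of valence, as you claim; and (ii) the fiber of the map $g\mapsto g|_\tau$ is a coset of the pointwise stabilizer of $\tau$, of size $\leq C$. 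Two small points deserve explicit care in a full writeup. First, a hyperbolic element of translation length $\leq\varepsilon$ whose axis contains $[x,y]$ moves both endpoints by $\leq\varepsilon$ without fixing anything, so one cannot shortcut the counting by arguing that $g$ pointwise fixes the middle; your proposal correctly avoids this trap. Second, to conclude $g_2^{-1}g_1\in\mathrm{PStab}(\tau)$ from $g_1\tau=g_2\tau$ you need both $g_i$ to preserve the orientation of $\tau$, which holds because each sends the end of $\tau$ nearer $x$ to a point near $x$; your phrase ``as edge-paths'' implicitly assumes this, but it should be justified. With those details filled in, the argument is correct.
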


\begin{definition}\cite[Definition 2.4]{Osin}
Let $G$ be a group acting on isometrically on a hyperbolic space $S$. An element $g\in G$ is called \textit{elliptic} if all orbits of $g$ are bounded; and \textit{hyperbolic} (or \textit{loxodromic}) if the map $\mathbb{Z}\rightarrow S$ defined by $n\mapsto g^ns$ is a quasi-isometry for some (equivalently, any) $s\in S$. Two hyperbolic elements $g,h\in G$ are called \textit{independent} if the sets $\{g^{\pm \infty}\}$ and $\{h^{\pm \infty}\}$ are disjoint.
\end{definition}

\begin{remark}
Let $G$ be a group acting on a tree $T$. It is known \cite[Proposition I.2.10 and Corollary]{tree} that the action of $G$ acting on $T$ has bounded orbits if and only if there exists some point $x\in T$ fixed by the action $G\curvearrowright T$. Therefore we can remove any ambiguity as to the definition of an elliptic action in the context of simplicial trees.
\end{remark}

The following theorem classifies acylindrical actions on hyperbolic spaces.

\begin{theorem}\cite[Theorem 1.1]{acylhypgps}\label{acylhypgrpclass}
Let $G$ be a group acting acylindrically on a hyperbolic space. Then $G$ satisfies exactly one of the following three conditions.
\begin{enumerate}
\item $G$ has bounded orbits.
\item $G$ is virtually cyclic and contains a hyperbolic element.
\item $G$ contains infinitely many independent hyperbolic elements.
\end{enumerate}
\end{theorem}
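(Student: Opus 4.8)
\emph{Sketch of a proof.} The plan is to argue directly, using acylindricity to control elementary subgroups and to exclude a parabolic-type pathology. First I would check that the three alternatives are pairwise incompatible. If $G$ has bounded orbits then no element of $G$ is hyperbolic, so (1) excludes (2) and (3). If $G$ is virtually cyclic and $h\in G$ is hyperbolic, then a finite-index cyclic subgroup $\langle g\rangle\le G$ contains a nontrivial power of $h$, hence $g$ is hyperbolic and every element of $G$ preserves the two-point limit set $\{g^{+\infty},g^{-\infty}\}$; thus $G$ contains no two independent hyperbolic elements, so (2) excludes (3).

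For the substantive direction, assume $G$ does not have bounded orbits. The first key step is the quantitative lemma that an acylindrical action with an unbounded orbit must contain a hyperbolic element: fixing $\varepsilon$ large in terms of the hyperbolicity constant of $S$ and letting $R,N$ be the associated acylindricity constants, one shows that if every element of $G$ were elliptic then, using that a product of two elliptic isometries with far-apart quasiconvex fixed sets is hyperbolic with quantitatively controlled translation length, one could exhibit more than $N$ elements moving a fixed pair of points at distance $\ge R$ by at most $\varepsilon$ — a contradiction. So $G$ has a hyperbolic element $g$. The second key step is to show that the elementary closure $E(g)=\{h\in G: h\{g^{+\infty},g^{-\infty}\}=\{g^{+\infty},g^{-\infty}\}\}$ is virtually cyclic: acylindricity forces the pointwise stabilizer of a sufficiently long sub-segment of a quasi-axis of $g$ to be finite, this finite group is the kernel of a homomorphism from a subgroup of index at most $2$ in $E(g)$ to a group of translations of a line, and acylindricity again forces the image to be discrete; together these give that $E(g)$ is virtually cyclic. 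Finally one must rule out the possibility that $G$ fixes a single boundary point $\xi=g^{+\infty}$ but not the pair $\{g^{\pm\infty}\}$ (a focal action): then the subgroup fixing $\xi$ with no asymptotic translation toward $\xi$ is unbounded, and conjugating $g$ by such elements and translating by high powers of $g$ once more produces more than $N$ elements moving a far-apart pair by at most $\varepsilon$, contradicting acylindricity.

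With these facts in hand the trichotomy is immediate. If $G=E(g)$, then $G$ is virtually cyclic and contains the hyperbolic element $g$, which is case (2). Otherwise pick $h\in G\setminus E(g)$; then $hgh^{-1}$ is hyperbolic with $\{hg^{+\infty},hg^{-\infty}\}\ne\{g^{+\infty},g^{-\infty}\}$, and since the focal case has been excluded these two pairs may be taken disjoint after a short ping-pong argument, producing two independent hyperbolic elements $a,b$. The conjugates $a^{n}ba^{-n}$ for $n\ge 1$ are all hyperbolic with pairwise disjoint limit sets — an equality $a^{n}b^{\pm\infty}=a^{m}b^{\pm\infty}$ with $n\ne m$ would force $b^{+\infty}$ or $b^{-\infty}$ to lie in $\{a^{+\infty},a^{-\infty}\}$, contradicting independence of $a$ and $b$ — so $G$ contains infinitely many independent hyperbolic elements, which is case (3). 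The main obstacle is the package of quantitative lemmas in the middle paragraph: existence of a hyperbolic element under unbounded orbits, virtual cyclicity of $E(g)$, and the exclusion of focal actions each genuinely uses the acylindricity constants together with standard hyperbolic-space estimates (stability of quasigeodesics, translation-length control for products of elliptic isometries, North--South dynamics on the boundary), whereas the ping-pong step and the mutual-exclusivity checks are routine.
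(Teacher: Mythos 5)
This statement is not proved in the paper at all: it is quoted verbatim from Osin's work on acylindrically hyperbolic groups (\cite{acylhypgps}, Theorem 1.1) and used as a black box, so there is no in-paper argument to compare yours against. Measured against the actual proof in the cited source, your sketch is a faithful reconstruction of the standard strategy: Osin's argument runs through Gromov's classification of isometric actions on hyperbolic spaces (elliptic, horocyclic, lineal, focal, general type), shows that acylindricity forces a horocyclic action to have bounded orbits and rules out focal actions entirely, and uses the virtual cyclicity of the elementary closure $E(g)$ of a loxodromic element. Those are exactly the three ``quantitative lemmas'' you isolate in your middle paragraph, and you are right that they are where all the work lives; your framing of them as consequences of the acylindricity constants plus stability of quasigeodesics and North--South dynamics is accurate in spirit, though each is a genuine theorem rather than a routine estimate.

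Two small points where the sketch is thinner than it should be. First, in the step producing two independent loxodromics from $h\notin E(g)$, the case where $h$ fixes exactly one of $g^{\pm\infty}$ is precisely the focal configuration, so the disjointness of the limit sets is not a ``short ping-pong argument'' layered on top of the exclusion of focal actions --- it \emph{is} that exclusion, applied to the subgroup generated by $g$ and $hgh^{-1}$; you should say so explicitly rather than invoking ping-pong. Second, in the final step your argument only treats coincidences of the form $a^{n}b^{+\infty}=a^{m}b^{+\infty}$; you also need to handle $a^{n}b^{+\infty}=a^{m}b^{-\infty}$, which does not immediately put $b^{\pm\infty}$ into $\{a^{\pm\infty}\}$. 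It is easily repaired (two such coincidences with different exponents would force $b^{+\infty}$ to be fixed by a nontrivial power of $a$, so at most finitely many indices are bad and one passes to a subsequence), but as written the claim of pairwise disjointness is not fully justified. Neither issue affects the overall correctness of the outline.
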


As an application of the above theorem, every element of a group acting acylindrically on a hyperbolic space
is either elliptic or hyperbolic. In the theorem above, $(1)$ and $(2)$ are called \textit{elementary} actions, whereas $(3)$ is called a \textit{non-elementary} action.

\begin{definition}
We say a group $G$ is \textit{acylindrically hyperbolic} if it admits a non-elementary acylindrical action on a hyperbolic metric space $S$ by isometries.
\end{definition}

\section{Preserving acylindricity under quotient}\label{section: quotient action}
In this section, we provide sufficient conditions on normal subgroups of $G$ such that acylindricity of the action of $G$ on a tree $T$ is preserved under quotients by such normal subgroups. We begin by recalling the definition of an equivariant family of subgroups of $G$.

\begin{definition}
Let $G$ be a group acting on a tree $T$. For each vertex $v\in V(T)$, let $R_v$ be a subgroup of the stabilizer of $v$ in $G$. We say the family of subgroups $\{R_v\}$ is an \textit{equivariant family} of subgroups of $G$ if it satisfies the following condition:
\begin{itemize}
    \item[-] If $g$ lies in $G$ and $v$ is a vertex of $T$ then $gR_vg^{-1}=R_{gv}$.
\end{itemize}
\end{definition}

\begin{remark}\label{rmk: orbitrep of equi-family}
The equivariance condition implies that for each vertex $v$ the subgroup $R_v$ is normal in the stabilizer of $v$, and that the subgroup $\langle R_v \rangle$ of $G$ generated by the family is normal in $G$. Further, if we choose orbit representatives $v_i$ for the action of $G$ on the vertices of $T$, then $\langle R_v \rangle$ is the normal closure of the set $\{ R_{v_i} \}$. Letting $(\Gamma,\mathcal{G})$ be the graph of groups associated to the action of $G$ on $T$, we can always find such a set of orbit representatives by choosing a lift $\tilde{\Gamma}$ of $\Gamma$ in $T$, namely $\langle R_v \rangle=\langle\langle R_{v_i}| v_i\in V(\tilde{\Gamma})\rangle\rangle$. Moreover, moving forward we will choose a particular lift $\tilde{\Gamma}$ of $\Gamma$ such that the vertex stabilizers in $\tilde{\Gamma}$ remain the same as the vertex groups in $\Gamma$, i.e., $G_{\hat{v}} = G_v$ for lifts of vertices $\hat{v}$ in $\Gamma$ to $v$ in $\tilde{\Gamma}$. With this choice of lift, we identify $R_v$'s as a subgroup of the vertex groups $G_{\hat{v}}$ in $\Gamma$ for each $v \in V(\tilde\Gamma)$. This implies that $\langle R_v \rangle=\langle\langle R_{\hat{v}}|\hat{v}\in V(\Gamma)\rangle\rangle$.

\end{remark}

Let us consider an action on a tree. Our next result classifies normal subgroup quotients which preserve a tree structure.

\begin{proposition}\label{prop: quotient_tree}
Let $G$ be a group acting on a simplicial tree $T$, and let $N$ be a normal subgroup of $G$. The quotient graph $T/N$ is a tree if and only if $N$ is generated by an equivariant family of subgroups $\{R_v\}$, i.e., $N = \langle R_v \rangle$.
\end{proposition}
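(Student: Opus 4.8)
The plan is to prove both directions by analyzing the quotient map $q: T \to T/N$ and the structure of fibers. First I would set up the basic observation: since $T$ is a tree and $N$ acts on it, the quotient graph $T/N$ is connected, so $T/N$ is a tree if and only if it has no nontrivial reduced cycles, equivalently if and only if the action of $N$ on $T$ has the property that $N$ is generated by the vertex stabilizers $N \cap \Stab_G(v) = \Stab_N(v)$. This is a standard fact from Bass--Serre theory: for a group $N$ acting on a tree $T$ without inversions, the quotient $T/N$ is a tree precisely when $N = \langle \Stab_N(v) : v \in V(T)\rangle$ (one sees this from the graph-of-groups decomposition of $N$ induced by $T$: the fundamental group of that graph of groups is a free product of the vertex groups amalgamated along edge groups \emph{times} a free group of rank $b_1(T/N)$, so $T/N$ is a tree iff that free part is trivial, iff $N$ is generated by its vertex stabilizers). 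I would state this as the key lemma, citing \cite{tree} for the underlying structure theory.

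Next, for the ``if'' direction: suppose $N = \langle R_v \rangle$ where $\{R_v\}$ is an equivariant family with $R_v \le \Stab_G(v)$. Then each $R_v \le \Stab_N(v)$, so $N = \langle R_v : v \in V(T)\rangle \le \langle \Stab_N(v) : v \in V(T)\rangle \le N$; hence $N$ is generated by its vertex stabilizers, and by the key lemma $T/N$ is a tree. (I should also note $N$ acts without inversions on $T$: since $G$ acts without edge inversions—or after barycentric subdivision we may assume this—so does $N$; this is needed for the quotient to be a genuine graph, and the paper works in this setting.)

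For the ``only if'' direction: suppose $T/N$ is a tree. Set $R_v := \Stab_N(v) = N \cap \Stab_G(v)$ for each vertex $v$. The equivariance is immediate: for $g \in G$, $g R_v g^{-1} = g(N \cap \Stab_G(v))g^{-1} = gNg^{-1} \cap g\Stab_G(v)g^{-1} = N \cap \Stab_G(gv) = R_{gv}$, using normality of $N$ in $G$. It remains to show $N = \langle R_v : v \in V(T)\rangle$, i.e., that $N$ is generated by its vertex stabilizers—this is exactly the content of the key lemma applied to the action $N \curvearrowright T$, since $T/N$ being a tree forces the free part of $N$'s graph-of-groups decomposition to vanish.

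The main obstacle is making the key lemma precise and self-contained: one must carefully invoke Bass--Serre theory for the (possibly non-finitely-generated, non-finitely-presented) subgroup $N$ acting on $T$, identify $N$ with $\pi_1$ of the quotient graph of groups $(T/N, \mathcal{N})$, and argue that when $T/N$ is a tree this $\pi_1$ is generated by the images of the vertex groups—while conversely a nontrivial loop in $T/N$ produces a stable letter (a hyperbolic element of $N$) not expressible in terms of vertex stabilizers. I would handle this by choosing a spanning tree $T_0$ of $T/N$, lifting it to $T$, and observing that the edges of $T/N$ outside $T_0$ correspond to stable letters generating a free quotient of $N$ of rank $b_1(T/N)$; $N$ is generated by vertex stabilizers iff this rank is $0$ iff $T/N$ is a tree. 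All other steps are routine once this is in place.
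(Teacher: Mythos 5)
Your proof is correct and follows essentially the same route as the paper: both directions rest on the Bass--Serre decomposition of $N$ acting on $T$, with $T/N$ being a tree exactly when the free part of that decomposition (the stable letters, counted by $b_1(T/N)$) vanishes, i.e., when $N$ is generated by its vertex stabilizers $\mathrm{Stab}_N(v)$. If anything, packaging this as a single key lemma makes your ``if'' direction more precise than the paper's, which argues informally that quotienting by the $R_v$'s folds edges at each vertex without creating loops.
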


\begin{proof}
Let $\Gamma_{N}$ denote the quotient graph $T/N$. Consider the action of $N \curvearrowright T$ that we obtained by restricting the action of $G \curvearrowright T$ to $N$. 
Assume that $\Gamma_N$ is a tree. We note that $\Gamma_{N}$ can be lifted to a subtree $\tilde{\Gamma}_{N}$ in $T$ such that $N\cdot \tilde{\Gamma}_N=T$, i.e., $\tilde{\Gamma}_N$ is a fundamental domain of $T$ under the action $N\curvearrowright T$. By Bass-Serre theory, $N$ admits a graph of groups structure $(\Gamma_N, \mathcal{G}_N)$ such that for each vertex $\bar{v}$ in $\Gamma_N$, the vertex group is $G_{\bar{v}}=\mbox{Stab}_{N}(v)$ where $v$ is the lift of $\bar{v}$ in $\tilde{\Gamma}_N$, and for each edge $\overline{e}$ in $\Gamma_N$, the edge group is $G_{\overline{e}}=\mbox{Stab}_N(e)$ where $e$ is the lift of $\bar{e}$ in $\tilde{\Gamma}_N$. Further, we have $N=\pi_1(\Gamma_N, \mathcal{G}_N)$. Since $\Gamma_N$ is a tree, then $N=\langle \sqcup_{\overline{v}\in V(\Gamma_N)} G_{\overline{v}}\rangle=\langle\sqcup_{v\in V(\tilde{\Gamma}_N)}\mbox{Stab}_N(v)\rangle$. Now let $R_v=\mbox{Stab}_N(v) = \mbox{Stab}_G(v) \cap N \leq \mbox{Stab}_G(v)$ for each vertex $v$ in $\tilde{\Gamma}_N$. Since $N\cdot \tilde{\Gamma}_N= T$, we can repeat the choice equivariantly throughout vertices in $T$, i.e., for any $w\in V(T)$, we have $w=h\cdot v$ for some $h\in N$ and $v\in V(\tilde{\Gamma}_N)$, we then choose $R_{w}=hR_vh^{-1}$. It follows that $\{R_v\}$ is an equivariant family of subgroups of $G$. Under this choice of $\{R_v\}$, it is clear that $N=\langle R_v\rangle$. Conversely, suppose $N$ is a normal subgroup generated by $\{R_v\}$, an equivariant family of subgroups of $G$. Since each $R_v$ is a subgroup of some vertex stabilizer, it does not contain any stable letters. In addition, quotient $T$ by each $R_v$ folds edges emanating from $v$. Thus, it follows from Bass-Serre theory and the definition of equivariant family that in the quotient graph, valence of each vertex might be reduced but no loop is created. Therefore the quotient graph $T/N$ remains a tree.

\end{proof}

\begin{theorem}\label{thm: quotientActionAcyl}
Let $G$ be a group with a $(1,C)$-acylindrical action on a simplicial tree $T$, i.e. the edge stabilizers are finite and let $\{R_v\}$ be an equivariant family of subgroups of $G$. Then the quotient action $G/\langle R_v \rangle$ on $T/ \langle R_v \rangle$ remains $(1,C)$-acylindrical.
\end{theorem}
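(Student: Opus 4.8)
The plan is to use Theorem \ref{thm: acyl=kcacyl}: since $G \curvearrowright T$ is acylindrical, there are constants $k \geq 0$ and $C \geq 1$ such that the pointwise stabilizer in $G$ of any edge path of length $\geq k$ in $T$ has at most $C$ elements. I will show that $\bar{G} := G/\langle R_v\rangle$ acts on $\bar{T} := T/\langle R_v\rangle$ in a $(k, C)$-acylindrical way, with the \emph{same} constants $k$ and $C$; by Theorem \ref{thm: acyl=kcacyl} again this gives acylindricity of the quotient action. Note $\bar{T}$ is indeed a tree by Proposition \ref{prop: quotient_tree}, so the statement makes sense.

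First I would set up the combinatorics of the quotient map $q \colon T \to \bar{T}$. Write $N = \langle R_v\rangle$, which is normal in $G$, so the $G$-action descends to $\bar{G} \curvearrowright \bar{T}$. The key point is a path-lifting statement: given an edge path $\bar{p} = (\bar{e}_1, \dots, \bar{e}_m)$ in $\bar{T}$ of length $m \geq k$, I want to lift it to an edge path $p = (e_1, \dots, e_m)$ in $T$ of the same length with $q(e_i) = \bar{e}_i$. This is possible because $q$ is a quotient of trees by a group acting without inversions (after passing to a subdivision if necessary to kill inversions — a standard maneuver that does not affect acylindricity constants up to a factor of $2$, which I would absorb into $k$): choose a lift $e_1$ of $\bar{e}_1$; inductively, $\bar{e}_{i+1}$ shares the vertex $t(\bar{e}_i)$ with $\bar{e}_i$, and since $q$ is surjective on edges incident to a given vertex lift, we may choose $e_{i+1}$ incident to $t(e_i)$ with $q(e_{i+1}) = \bar{e}_{i+1}$. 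Because $\bar{p}$ is a reduced path in the tree $\bar{T}$ and we lift consecutively, the resulting $p$ is itself a reduced edge path of length $m$ (no backtracking can be introduced at a step without projecting to backtracking in $\bar T$). The main technical care here is the inversion issue and making sure the lift stays reduced; this is the step I expect to be the main obstacle, though it is more bookkeeping than genuine difficulty.

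Next I would bound the pointwise stabilizer of $\bar{p}$ in $\bar{G}$. Let $\bar{g} \in \bar{G}$ fix every vertex of $\bar{p}$. Lift $\bar{g}$ to some $g \in G$. Then $g$ maps the lift $p$ to another lift $g \cdot p$ of $\bar{p}$; since the two lifts project to the same reduced path, there is $n \in N$ with $n g \cdot p = p$ vertex-by-vertex — here one uses that for a tree action by $N$, any two lifts of a reduced path in the quotient are related by a single element of $N$ (this follows from the fact that $\bar T = T/N$ is a tree, so the path lifts uniquely up to the $N$-action, together with connectedness of $\bar p$). Thus $ng$ lies in the pointwise stabilizer $\mathrm{Stab}_G(p)$ of the length-$m \geq k$ path $p$, which has at most $C$ elements. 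Since the coset $\bar g = gN$ is determined by $ng \in \mathrm{Stab}_G(p)$ modulo $N \cap \mathrm{Stab}_G(p)$, the assignment $\bar g \mapsto (ng) \cdot (N \cap \mathrm{Stab}_G(p))$ is well defined and injective from the pointwise stabilizer of $\bar p$ in $\bar G$ into $\mathrm{Stab}_G(p) / (N \cap \mathrm{Stab}_G(p))$; hence that stabilizer has at most $|\mathrm{Stab}_G(p)| \leq C$ elements. Therefore $\bar G \curvearrowright \bar T$ is $(k, C)$-acylindrical, and by Theorem \ref{thm: acyl=kcacyl} the quotient action is acylindrical, completing the proof.
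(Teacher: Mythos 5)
Your overall strategy is the same as the paper's: pass to the $(k,C)$-acylindricity formulation via Theorem \ref{thm: acyl=kcacyl}, lift a long path $\bar p$ from $T/N$ to a path $p$ in $T$ (the paper does this in one stroke by lifting the whole quotient tree to a fundamental domain $\tilde\Gamma_N\subset T$, which also disposes of your inversion and reducedness bookkeeping), and then inject the pointwise stabilizer of $\bar p$ in $\overline{G}$ into $\mathrm{PStab}_G(p)$ modulo $N$. The final counting step is fine \emph{granted} the key claim that some $n\in N$ satisfies $ngp=p$.

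The genuine gap is the lemma you invoke to produce that $n$: it is \emph{not} true that any two lifts of a reduced path in $T/N$ are related by a single element of $N$. For a counterexample, let $N=A\ast B\ast E$ be the fundamental group of a segment of groups $\hat u - \hat v - \hat w$ with trivial edge groups, acting on its Bass--Serre tree $T$; then $N$ is generated by the equivariant family given by the full vertex stabilizers and $T/N$ is the segment itself. The lifts of this reduced length-$2$ path through the vertex $B$ are indexed by pairs $(b_1,b_2)\in B\times B$, and an element $m\in N$ carries the lift $(1,1)$ to $(b_1,b_2)$ only if $m=b_1=b_2$; so for $b_1\neq b_2$ the two lifts are not $N$-translates. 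Uniqueness of lifts up to the $N$-action holds edge by edge, but the resulting elements of $N$ cannot in general be chosen coherently along the path --- that coherence is exactly the crux of the theorem, not a formal consequence of $T/N$ being a tree. What you actually need is weaker, since your two lifts are $p$ and $gp$ for a single global isometry $g$ of $T$ projecting to an element that fixes $\bar p$ pointwise; this is also precisely the step the paper's own proof asserts (that a suitable representative of $\bar g$ lies in $\mathrm{PStab}_G([u,v])$). But the justification you offer for it is false as stated, so this step requires a different argument --- for instance, normalizing $g$ to fix the endpoint $u$ and analyzing the common initial segment of $[u,v]$ and $g[u,v]$ --- rather than matching the two lifts edge-by-edge by elements of $N$.
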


\begin{proof}
For simplicity, we denote $N=\langle R_v\rangle$, denote the quotient group $G/N$ by $\overline{G}$, and denote the quotient graph $T/N$ by $\Gamma_N$. Since $\Gamma_N$ is a tree, then $\Gamma_N$ can be lifted to a subtree $\tilde{\Gamma}_N$ (an isometrically embedded copy of $\Gamma_N$) in $T$ such that $N\cdot \tilde{\Gamma}_N=T$, i.e., $\tilde{\Gamma}_N$ is the fundamental domain of $T$ under the action $N\curvearrowright T$.  We now assume that $G\curvearrowright T$ is $(1,C)$-acylindrical. Then every edge stabilizer under the action $G\curvearrowright T$ is finite. Let $\bar{e} =[\bar{u},\bar{v}]$ be an edge in $\Gamma_N$. Further, let $u$ and $v$ be the lifts of $\bar{u}$ and $\bar{v}$ respectively in $\tilde{\Gamma}_N$. By the construction of $\tilde{\Gamma}_N$, we have $d_T(u,v) = 1$. Choose coset representatives $\Sigma_{N}\subset G$, we can represent elements in $\overline{G}$ by elements in $\Sigma_{N}$. For each $g\in \mbox{PStab}_{\overline{G}}([\bar{u},\bar{v}])=\mbox{PStab}_{\overline{G}}(\{\bar{u},\bar{v}\})$, we claim that $g\in \mbox{PStab}_G([u,v])$. If not, then $g\cdot[u,v] \neq n\cdot[u,v]$ for any $n \in N$. However, since $g$ stabilizes $\bar{u}$ and $\bar{v}$, then $g\cdot u=n\cdot u$ and $g\cdot v=n'\cdot v$ for some $n,n'\in N$ and $n\neq n'$. This implies that the path $[nv,nu]\cup [nu,n'v]$ in $T$ creates a circuit in the quotient $\Gamma_N$, contradicting the fact that $\Gamma_N$ is a tree (Proposition \ref{prop: quotient_tree}). As we have $|\mbox{PStab}_G([u,v])|\leq C$. This shows that $|\mbox{PStab}_{\overline{G}}([\bar{u},\bar{v}])|\leq C$, i.e. each edge stabilizer in the quotient action of $\overline{G} \curvearrowright \Gamma_N$ is finite. Thus the quotient action $\overline{G}\curvearrowright \Gamma_N$ is also $(1,C)$-acylindrical.

\end{proof}

\begin{remark}
Note that the proof technique above for Theorem \ref{thm: quotientActionAcyl} does not work for $(k,C)$-acylindrical actions of $G$ on $T$ for $k \geq 2$. In particular, $g\in \mbox{PStab}_{\overline{G}}([\bar{u},\bar{v}])$ does not imply $g\in \mbox{PStab}_G([u,v])$ in general, as the existence of a single element $n\in N$ such that $g\cdot [u,v] = n\cdot [u,v]$ cannot be guaranteed. The following example is provided by Nicholas Touikan: 

Consider the HNN-extension $G = \langle a,b \rangle *_{\langle a \rangle\sim_{\phi}  \langle b \rangle}$ where $\phi$ is given by $\phi: a\mapsto b$, i.e., $G = \langle a,b,t \  | \  tat^{-1} = b \rangle$. Then the action of $G$ on its Bass-Serre tree $T$ is $(2,1)$-acylindrical. Let us consider the equivariant family of subgroups generated by $R_{\langle a,b \rangle} = \langle ab^{-1} \rangle$. Then we have $N = \langle\langle ab^{-1} \rangle\rangle$ and $\overline{G} = G/N = \langle a,t \ | \  tat^{-1}=a \rangle$. In addition, the quotient tree $\Gamma_N$ is a bi-infinite line pointwise stabilized by the infinite cyclic group $\langle a \rangle$, therefore loosing acylindricity in the quotient action. Now we consider the length two path $[\overline{t^{-1}\langle a,b\rangle}, \overline{at\langle a,b \rangle}]=[t^{-1}\langle a \rangle, t\langle a \rangle]$ in $\Gamma_N$. Then $a \in \Sigma_N$ stabilizes this path. Let $\tilde\Gamma_N = \mbox{Axis}(t)$ in the action of $G\curvearrowright T$, we can lift the path $[\overline{t^{-1}\langle a,b \rangle}, \overline{at\langle a,b \rangle}]$ to $[t^{-1}\langle a,b \rangle, t\langle a,b \rangle]$ in $T$. Then there is no $n \in N$ such that $a \cdot [t^{-1}\langle a,b \rangle, t\langle a,b \rangle] = n \cdot [t^{-1}\langle a,b \rangle, at\langle a,b \rangle]$. Therefore, $a \notin \mbox{PStab}_G([t^{-1}\langle a,b \rangle, t\langle a,b \rangle])$.
\end{remark}

\underline{Notations.} Let $G$ be a group acting on a tree $T$, and let $(\Gamma,\mathcal{G})$ be a finite graph of groups associated to the action $G\curvearrowright T$. Further, let $N$ be the normal subgroup of $G$ generated by an equivariant family of subgroups $\{R_v\}$. We denote the quotient group $G/N$ by $\overline{G}$, the quotient tree $T/N$ by $\overline{T}$. Choose a coset representatives $\Sigma_{N}\subset G$, we then identify elements in $\overline{G}$ by elements in $\Sigma_{N}$. We use $v$ to denote vertex in $T$, $\hat{v}$ to denote vertex in $\Gamma$ and $\bar{v}$ to denote vertex in $\overline{T}$. For any two vertices $\hat{u}$ and $\hat{v}$ in $\Gamma$, we denote $[\hat{v},\hat{v}]$ the geometric geodesic path (disregrading the orientation) joining $\hat{u}$ and $\hat{v}$.

We need a few lemmas before proving the main theorem, Theorem \ref{thm: quotientActionAcyl}, of this section.

\begin{lemma}\label{lemma: valence in quotient}
Let $v$ be a vertex in $T$. The vertex $\bar{v}$ in $\overline{T}$ has valence at least two if and only if at least one the following holds:
\begin{enumerate}
\item there is an edge $\hat{e}$ in $\Gamma$ emanating from $\hat{v}$ such that $G_{\hat{v}}\neq \langle G_{\hat{e}}, R_{\hat{v}}\rangle$; 
\item $\hat{v}$ has valence at least two.
\end{enumerate}
\end{lemma}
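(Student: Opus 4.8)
The plan is to analyze how the quotient map $T \to \overline{T}$ acts locally around the vertex $v$, using the structure established in Remark~\ref{rmk: orbitrep of equi-family} and Proposition~\ref{prop: quotient_tree}. Recall that $\overline{T} = T/N$ is a tree, and that passing to the quotient by $N = \langle R_v \rangle$ does two things locally: it folds together edges at $v$ that lie in the same orbit under $R_v$ (equivalently under $\mathrm{Stab}_G(v)$, after identifying via the equivariant family), and it may also identify $v$ with translates $n v$ for $n \in N$. Since $\overline{T}$ is a tree with $\overline{T}/\overline{G} = T/G = \Gamma$, the key observation is that edges emanating from $\bar v$ in $\overline{T}$ correspond to $\mathrm{Stab}_{\overline{G}}(\bar v)$-orbits of edges emanating from $\bar v$, and these in turn are controlled by the double coset data $\langle G_{\hat e}, R_{\hat v}\rangle \backslash G_{\hat v}$ together with the combinatorics of the edges of $\Gamma$ at $\hat v$.

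The main computation is to count edges at $\bar v$. First I would set up notation: let $\hat v$ be the image of $v$ in $\Gamma$, with vertex group $G_{\hat v} = \mathrm{Stab}_G(v)$ (using the chosen lift as in Remark~\ref{rmk: orbitrep of equi-family}), and let $\hat e_1, \dots, \hat e_m$ be the edges of $\Gamma$ emanating from $\hat v$. The edges of $T$ emanating from $v$ that map to $\hat e_i$ are in bijection with the cosets $G_{\hat v}/\alpha_{\hat e_i}(G_{\hat e_i})$, i.e. with $G_{\hat v}/G_{\hat e_i}$. In the quotient $\overline{T}$, the edges emanating from $\bar v$ mapping to $\hat e_i$ are in bijection with $\langle G_{\hat e_i}, R_{\hat v}\rangle \backslash G_{\hat v}$ — because the $R_{\hat v}$-action (folding) merges exactly the $R_{\hat v}$-orbits of such edges, and one checks that the stabilizer data makes this into a clean double-coset count. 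Thus $\bar v$ has a \emph{loop} at it only if some $\hat e_i$ is a loop at $\hat v$ in $\Gamma$ or gets identified with its reverse — but Proposition~\ref{prop: quotient_tree} guarantees no loops are created, so $\bar v$ has valence $\geq 2$ precisely when either $\Gamma$ already has valence $\geq 2$ at $\hat v$ (condition (2)), or some single edge $\hat e_i$ contributes at least two edges at $\bar v$, i.e. $|\langle G_{\hat e_i}, R_{\hat v}\rangle \backslash G_{\hat v}| \geq 2$, which is exactly $G_{\hat v} \neq \langle G_{\hat e_i}, R_{\hat v}\rangle$ (condition (1)).

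For the converse direction I would argue the contrapositive: if $\hat v$ has valence one in $\Gamma$ (say with the unique edge $\hat e$) and $G_{\hat v} = \langle G_{\hat e}, R_{\hat v}\rangle$, then the only edge emanating from $\bar v$ in $\overline{T}$ corresponds to the single double coset $\langle G_{\hat e}, R_{\hat v}\rangle \backslash G_{\hat v} = \{*\}$, so $\bar v$ has valence exactly one. Both directions then require carefully justifying the double-coset bijection; that identification is the step I expect to be the main obstacle, since it requires tracking how the folding operation interacts with the Bass-Serre tree structure of both $G \curvearrowright T$ and $\overline{G} \curvearrowright \overline{T}$, and verifying that the equivariance condition $g R_v g^{-1} = R_{gv}$ makes the count independent of the chosen lift. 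Once that bijection is in hand, the equivalence of the two conditions is immediate. I would also note that the statement is really local at $v$, so it suffices to work inside the star of $v$ and its image, which keeps the argument finite and combinatorial.
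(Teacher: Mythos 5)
Your proposal is correct and follows essentially the same route as the paper: the paper's proof is exactly the local orbit/coset analysis you describe, carried out by exhibiting explicit representatives (an element $g\in G_{\hat v}\setminus\langle G_{\hat e},R_{\hat v}\rangle$ giving two edges $e, ge$ in distinct $\langle R_v\rangle$-orbits for the ``if'' direction, and lifting two distinct edges at $\bar v$ for the ``only if'' direction) rather than stating the coset bijection $\langle G_{\hat e},R_{\hat v}\rangle\backslash G_{\hat v}$ in full generality. The identification you flag as the main obstacle is precisely what those explicit computations verify, so there is no substantive difference.
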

\begin{proof}
Assume that condition (1) holds. As in Remark \ref{rmk: orbitrep of equi-family}, we can assume Stab$(v) = G_{\hat{v}}$. Let $e$ be the lift (in $\tilde\Gamma$) of $\hat{e}$ that emanating from $v$. Since $G_{\hat{v}}\neq \langle G_{\hat{e}}, R_{\hat{v}}\rangle$, then there exists a non-trivial element $g\in G_{\hat{v}}\setminus \langle G_{\hat{e}}, R_{\hat{v}}\rangle$. Further, since $g\not\in G_{\hat{e}}$, we have $ge$ and $e$ are two different edges emanating from $v$. Since $g\not\in R_{\hat{v}}$. then $gG_{\hat{e}}\neq hG_{\hat{e}}$ for any $h\in R_{\hat{v}}$. This shows that $ge$ and $e$ lie in two different $\langle R_v\rangle$-orbits. Thus $\bar{v}$ has valence at least two. Now assume that condition (2) holds, then there are at least two different $G$-orbits of edges emanating from $v$. It follows that there are at least two different $\langle R_v \rangle$-orbits of edges emanating from $v$. Therefore $\bar{v}$ has valence at least two.

For the only if direction, we assume that $\bar{v}$ has valence at least two. Then there are two different edges $\bar{e}$ and $\bar{f}$ emanating from $\bar{v}$ in $\overline{T}$. Let $e$ (resp. $f$) be an edge emanating from $v$ in $T$ such that $q(e)=\bar{e}$ (resp. $q(f)=\bar{f}$), where $q: T\rightarrow \overline{T}$ is the natural quotient map. There are two cases, depending on whether $e$ and $f$ lie in the same $G$-orbit or not. If $e$ and $f$ do not lie in the same $G$-orbit, then $\hat{v}$ has valence at least two. If $e$ and $f$ lie in the same $G$-orbit $\hat{e}$, without loss of generality, we may assume that Stab$(e) = G_{\hat{e}}$. Then there exists an element $g\in G_{\hat{v}}\setminus G_{\hat{e}}$ such that $f=ge$, i.e., $f$ is represented by $gG_{\hat{e}}$. Further, since $\bar{e}\neq \bar{f}$, then $gG_{\hat{e}}\neq hG_{\hat{e}}$ for any $h\in R_{\hat{v}}$. This implies that $g\in G_{\hat{v}}\setminus \langle G_{\hat{e}}, R_{\hat{v}}\rangle$. Thus $G_{\hat{v}}\neq \langle G_{\hat{e}}, R_{\hat{v}}\rangle$. This completes the proof. 
\end{proof}

\begin{lemma}\label{lemma: quotient infinite tree}
$\overline{T}$ is an infinite tree if at least one of the following holds:
\begin{enumerate}
    \item there are two vertices $\hat{u}$ and $\hat{v}$ in $\Gamma$ such that $\langle G_{\hat{e}},R_{\hat{u}}\rangle\neq G_{\hat{u}}$ and $\langle G_{\hat{f}},R_{\hat{v}}\rangle\neq G_{\hat{v}}$, where $\hat{e}$ and $\hat{f}$ are the edges on the geodesic path $[\hat{u},\hat{v}]$ that adjacent to $\hat{u}$ and $\hat{v}$ respectively.
    \item $\Gamma$ contains a circuit.
\end{enumerate}
\end{lemma}

\begin{proof}

By Proposition \ref{prop: quotient_tree}, $\overline{T}$ is always a tree. Then it suffices to show that $\overline{T}$ is infinite if at least one of the above conditions holds. Assume that condition (1) holds. Let $[\hat{u},\hat{v}]$ be the geodesic segment in $\Gamma$ joining $\hat{u}$ and $\hat{v}$, we can lift $[\hat{u},\hat{v}]$ to a geodesic segment $[u,v]$ inside $\tilde\Gamma$, the partcular lift of $\Gamma$ as in Remark \ref{rmk: orbitrep of equi-family}. Note that $[u,v]$ generates a subtree $T'$ of $T$, i.e., $G\cdot [u,v] =T'\subseteq T$.  Since $G_{\hat{e}}\neq G_{\hat{u}}$ and $G_{\hat{f}}\neq G_{\hat{v}}$, then $T'$ must be an infinite tree. By Lemma \ref{lemma: valence in quotient}, we have both $\bar{u}$ and $\bar{v}$ have valence at least two and the image of $T'$ under the quotient map $q: T\rightarrow \overline{T}$ contains a bi-infinite line. More precisely, there exist $g\in G_{\hat{u}}\setminus \langle G_{\hat{e}},R_{\hat{u}}\rangle$ and $h\in G_{\hat{v}}\setminus \langle G_{\hat{f}},R_{\hat{v}}\rangle$ such that $g,h\in \overline{G}$, $gh$ is hyperbolic in both $G$ and $\overline{G}$, and both $T$ and $\overline{T}$ contain $\mbox{Axis}(gh)$. Therefore $\overline{T}$ is infinite. Now assume that condition (2) holds. This implies that $G$ contains a stable letter $t$. We note that the equivariant family of subgroups $\{R_v\}$ does not contain any stable letters of $G$, then each axis of the stable letters survives under the quotient map $q: T\rightarrow \overline{T}$. Thus $\overline{T}$ contains $\mbox{Axis}(t)$. This implies that $\overline{T}$ is infinite.
\end{proof}

The following proposition provides sufficient conditions for the graph of groups structure $(\Gamma,\mathcal{G})$ of $G$ to ensure that the quotient action contains enough hyperbolic elements.

\begin{proposition}\label{prop: nonelementary}
The quotient action $\overline{G}\curvearrowright \overline{T}$ contains at least two independent hyperbolic elements if $\{R_v\}$ satisfies at least one of the following conditions:
\begin{enumerate}
    \item the first Betti number $b_1(\Gamma)\geq 2$ where $\Gamma=T/G$.
    \item there are two vertices $\hat{u}$ and $\hat{v}$ in $\Gamma$ such that $\langle G_{\hat{e}},R_{\hat{u}}\rangle\neq G_{\hat{u}}$ and $\langle G_{\hat{f}},R_{\hat{v}}\rangle\neq G_{\hat{v}}$, where $\hat{e}$ and $\hat{f}$ are the edges on the geodesic path $[\hat{u},\hat{v}]$ that are adjacent to $\hat{u}$ and $\hat{v}$ respectively. Further, there exist two elements $g,h\in G_{\hat{v}}\setminus \langle G_{\hat{f}}, R_{\hat{v}}\rangle$ such that $gh^{-1}\not\in \langle G_{\hat{f}}, R_{\hat{v}}\rangle$.
    \item $\Gamma$ contains an immersed circuit. Further, there exists a vertex $\hat{v}$ and an edge $\hat{e}$ adjacent to $\hat{v}$ on the circuit such that $\langle G_{\hat{e}}, R_{\hat{v}}\rangle\neq G_{\hat{v}}$.
\end{enumerate}
\end{proposition}

\begin{proof}
Assume that condition (1) holds. Then $G$ contains at least two different stable letters $t_1$ and $t_2$. Since the equivariant family of subgroups $\{R_v\}$ does not contain any stable letters of $G$, then we have $t_1,t_2\in \overline{G}$ and $\overline{T}$ contains $\mbox{Axis}(t_1)$ and $\mbox{Axis}(t_2)$. This shows that $t_1$ and $t_2$ are two independent hyperbolic elements of the action $\overline{G}\curvearrowright \overline{T}$.

Assume that condition (2) holds. First, we can lift the geodesic segment $[\hat{u},\hat{v}]$ to the geodesic segment $[u,v]$ in $\tilde\Gamma$ as in Remark \ref{rmk: orbitrep of equi-family}. Let $e,f$ be the lifts of $\hat{e}$ and $\hat{f}$ on $[u,v]$ that are adjacent to $u$ and $v$ respectively. By assumption, we have $f,gf,hf$ are three different edges emanating from $v$. Moreover, they lie in three different $\langle R_v \rangle$-orbits, i.e., $\bar{v}$ has valence at least three and $q(f),q(gf),q(hf)$ represents three different edges in $\overline{T}$ emanating from $\bar{v}$. Let $g'\in G_{\hat{u}}\setminus \langle G_{\hat{e}}, R_{\hat{u}}\rangle$, similar to the proof of Lemma \ref{lemma: quotient infinite tree}, we have $g'g$ and $g'h$ are two independent hyperbolic elements of both $G$ and $\overline{G}$, and both $T$ and $\overline{T}$ contain $\mbox{Axis}(g'g)$ and $\mbox{Axis}(g'h)$.

Assume that condition (3) holds. Since $\Gamma$ contains an immersed circuit, we have $G$ contains a stable letter $t$. Denote the circuit by $\hat{c}$ and choose a base vertex $\hat{u}$ on $\hat{c}$. Without loss of generality, we may assume that the orientation of $\hat{e}$ is the same as $\hat{c}$, for otherwise we can choose $\hat{c}^{-1}$ instead of $\hat{c}$. Then we can lift $\hat{c}$ to a geodesic segment $[u,tu]$ in $T$ where $q(u)=\hat{u}$, and this geodesic segment generates the axis of $t$ in $T$. More precisely, we have $t^{m} \cdot [u,tu]=\mbox{Axis}(t)$ for all $m\in \mathbb{Z}$. Similar to condition (1), we have $\overline{T}$ contains $\mbox{Axis}(t)$. Let $v,e$ be the lift of $\hat{v},\hat{e}$ in $[u,tu]$ respectively. Since $\langle G_{\hat{e}}, R_{\hat{v}}\rangle\neq G_{\hat{v}}$, similar to the proof of Lemma \ref{lemma: valence in quotient}, there exists $g\in G_{\hat{v}}\setminus \langle G_{\hat{e}}, R_{\hat{v}}\rangle$ such that $ge$ and $e$ are two different edges in $T$ emanating from $v$. We note that $gt$ is a hyperbolic element of the action $G\curvearrowright T$ with translation length $\ell(gt)=\ell(t)$. In fact, the geodesic segment $[gu,gtu]$ generates the axis of $gt$ in $T$. In addition, since $ge$ and $e$ are two different edges in $T$, and $ge$ lies on $[gu,gtu]$. Then $gt$ and $t$ are two independent hyperbolic elements of the action $G\curvearrowright T$. Recall that $g\in G_{\hat{v}}\setminus \langle G_{\hat{e}}, R_{\hat{v}}\rangle$, then $gt,t\in \overline{G}$ and $\overline{T}$ contains both $\mbox{Axis}(gt)$ and $\mbox{Axis}(t)$. This shows that $gt$ and $t$ are two independent hyperbolic elements of the action $\overline{G}\curvearrowright \overline{T}$.

\end{proof}

We now provide a proof of our first main result, which is restated as follows.

\begin{theorem}\label{quotient_Thm}
Let $G$ be a group acting on a simplicial tree $T$, and let $N$ be a normal subgroup of $G$. If $N$ is generated by an equivariant family of subgroups $\{R_v\}$, then the quotient space $T/N$ is a tree. Moreover, suppose the action $G\curvearrowright T$ is a $(1,C)$-acylindrical action. Then the quotient action $G/N\curvearrowright T/N$ is a non-elementary acylindrical tree action if $\{R_v\}$ satisfies at least one of the following conditions:
\begin{enumerate}
    \item the first Betti number $b_1(\Gamma)\geq 2$ where $\Gamma=T/G$.
    \item there are two vertices $\hat{u}$ and $\hat{v}$ in $\Gamma$ such that $\langle G_{\hat{e}},R_{\hat{u}}\rangle\neq G_{\hat{u}}$ and $\langle G_{\hat{f}},R_{\hat{v}}\rangle\neq G_{\hat{v}}$, where $\hat{e}$ and $\hat{f}$ are the edges on the geodesic path $[\hat{u},\hat{v}]$ that are adjacent to $\hat{u}$ and $\hat{v}$ respectively. Further, there exist two elements $g,h\in G_{\hat{v}}\setminus \langle G_{\hat{f}}, R_{\hat{v}}\rangle$ such that $gh^{-1}\not\in \langle G_{\hat{f}}, R_{\hat{v}}\rangle$.
    \item $\Gamma$ contains an immersed circuit. Further, there exists a vertex $\hat{v}$ and an edge $\hat{e}$ adjacent to $\hat{v}$ on the circuit such that $\langle G_{\hat{e}}, R_{\hat{v}}\rangle\neq G_{\hat{v}}$.
\end{enumerate}
\end{theorem}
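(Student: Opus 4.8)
\emph{Proof strategy.} That $T/N$ is a tree is Proposition~\ref{prop: quotient_tree}, and by Theorem~\ref{thm: quotientActionAcyl} the induced action of $\overline{G}:=G/N$ on $\overline{T}:=T/N$ is acylindrical; so the only thing left is that this action is \emph{non-elementary}. I would first reduce this, via the trichotomy of Theorem~\ref{acylhypgrpclass}, to the following: \textbf{it suffices to exhibit two hyperbolic elements $g_{1},g_{2}\in\overline{G}$ on $\overline{T}$ with $\mathrm{Axis}(g_{1})\neq\mathrm{Axis}(g_{2})$.} Indeed, the action restricted to $H:=\langle g_{1},g_{2}\rangle$ is still acylindrical; applying Theorem~\ref{acylhypgrpclass} to $H\curvearrowright\overline{T}$, this action is not bounded (it contains the hyperbolic $g_{1}$), and it is not virtually cyclic, because a virtually cyclic group acting on a tree with a hyperbolic element preserves a unique line, which is then the axis of every hyperbolic element of the group, contradicting $\mathrm{Axis}(g_{1})\neq\mathrm{Axis}(g_{2})$. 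Hence $H$, and so $\overline{G}$, contains infinitely many independent hyperbolic elements, and $\overline{G}\curvearrowright\overline{T}$ is non-elementary.

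I work with the lift $\tilde{\Gamma}$ of $\Gamma$ and the identifications $G_{\hat v}=\mathrm{Stab}_{G}(v)$, $R_{v}\le G_{\hat v}$ of Remark~\ref{rmk: orbitrep of equi-family}, together with the bookkeeping from the proof of Lemma~\ref{lemma: valence in quotient}: since $R_{\hat v}\trianglelefteq G_{\hat v}$, for $x,y\in G_{\hat v}$ the two edges $x\!\cdot\!e$, $y\!\cdot\!e$ at $v$ (with $e$ a lift at $v$ of an edge $\hat e$ adjacent to $\hat v$) have the same image in $\overline{T}$ iff $x^{-1}y\in\langle G_{\hat e},R_{\hat v}\rangle$; consequently, for $\bar\delta\in\mathrm{Stab}_{\overline{G}}(\bar v)$ one has $\bar\delta\cdot\overline{e}=\overline{e}$ iff a representative of $\bar\delta$ lies in $\langle G_{\hat e},R_{\hat v}\rangle$. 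Consider case~(1): because $N\trianglelefteq G$, $\overline{T}/\overline{G}=T/G=\Gamma$, so the graph of groups for $\overline{G}$ again has underlying graph $\Gamma$ with $b_{1}(\Gamma)\ge 2$, hence carries a stable letter $t$. The canonical epimorphism $\pi_{1}(\Gamma,\mathcal{G})\twoheadrightarrow\pi_{1}(\Gamma)=F_{b_{1}(\Gamma)}$ kills every vertex group, hence kills $N=\langle R_{v}\rangle$ (each $R_{v}$ is contained in a vertex group), so it descends to $\overline{G}\twoheadrightarrow F_{b_{1}(\Gamma)}$. The image of $t$ is a free generator, so $t$ is not conjugate into a vertex group and therefore acts hyperbolically on $\overline{T}$ — so $\overline{G}$ has unbounded orbits; and $\overline{G}$ surjects onto $F_{b_{1}(\Gamma)}$ with $b_{1}(\Gamma)\ge 2$, so $\overline{G}$ is not virtually cyclic. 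By Theorem~\ref{acylhypgrpclass} the action is non-elementary.

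For cases~(2) and~(3), Proposition~\ref{prop: quotient infinite tree} — whose hypotheses (1) and (2) are exactly the first parts of our (2) and (3) — already supplies a hyperbolic element $\bar\alpha\in\overline{G}$ whose axis survives in $\overline{T}$; I only need a second hyperbolic element with a different axis. In case~(2), the construction in Proposition~\ref{prop: quotient infinite tree} lets me take $\mathrm{Axis}(\bar\alpha)$ through $\bar v$ using there, as one of its two incident edges, the image $\overline{e}$ of the lift of $\hat f$ in $\tilde{\Gamma}$; let $\bar c$ be the other one. Via the displayed criterion, the extra hypothesis on $g,h\in G_{\hat v}\setminus\langle G_{\hat f},R_{\hat v}\rangle$ translates into: $\bar v$ has at least three edges in the $\hat f$-direction in $\overline{T}$, and at least one of them, say $\overline{\delta e}$ with $\bar\delta\in\mathrm{Stab}_{\overline{G}}(\bar v)$, differs from both $\overline{e}$ and $\bar c$. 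Then $\bar\delta\bar\alpha\bar\delta^{-1}$ is hyperbolic with axis $\bar\delta\!\cdot\!\mathrm{Axis}(\bar\alpha)$, which uses the edge $\overline{\delta e}$ at $\bar v$, an edge $\mathrm{Axis}(\bar\alpha)$ does not use there; so $\mathrm{Axis}(\bar\delta\bar\alpha\bar\delta^{-1})\neq\mathrm{Axis}(\bar\alpha)$ and the reduction applies. In case~(3), $\bar\alpha$ is a conjugate of the stable letter of the immersed circuit, so after conjugation $\mathrm{Axis}(\bar\alpha)$ passes through $\bar v$ using the images $\bar c_{1},\bar c_{2}$ of the two consecutive circuit edges at that visit, with $\bar c_{1}$ the image of a lift of $\hat e$. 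Because the circuit is immersed and a Bass-Serre action has no edge inversions, $\bar c_{2}$ lies in a different $\overline{G}$-orbit of oriented edges than the image of any lift of $\hat e$ at $v$; so for $g\in G_{\hat v}\setminus\langle G_{\hat e},R_{\hat v}\rangle$ the edge $\overline{gc_{1}}$ differs from $\bar c_{1}$ (displayed criterion) and from $\bar c_{2}$ (orbit reasons), whence $\mathrm{Axis}(\bar g\bar\alpha\bar g^{-1})=\bar g\!\cdot\!\mathrm{Axis}(\bar\alpha)\neq\mathrm{Axis}(\bar\alpha)$ and the reduction applies.

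The hard part, I expect, is making precise that the conjugating element moves the axis off itself at $\bar v$ rather than merely transposing the two edges of the axis there — the ``reflection'' configuration, where the conjugate of $\bar\alpha$ would share the axis of $\bar\alpha$. This is exactly why case~(2) requires a third edge (so the conjugating element can send $\overline{e}$ past both $\overline{e}$ and $\bar c$) and why case~(3) requires the circuit to be \emph{immersed} (so the two axis edges at $\bar v$ are of distinct orbit type, and moving one of them inside its own orbit cannot reach the other); the sub-case of a loop edge in~(3) is the most delicate, as it forces one to unwind the Bass-Serre description of edges incident to $v$. A small auxiliary point, used in the reduction, is that a virtually cyclic group acting on a tree with a hyperbolic element has a unique invariant axis — standard, and dispatched in a couple of lines (finite subgroups fix points, a finite-index cyclic subgroup is normal and pins down the line, and two hyperbolic elements with disjoint axes generate a non-virtually-cyclic group by Theorem~\ref{acylhypgrpclass}).
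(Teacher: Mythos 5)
Your proposal is correct, and it reaches the same endpoint as the paper while implementing the non-elementarity step by a genuinely different route in each case. The paper argues directly: in case (1) it takes two distinct stable letters $t_1,t_2$ and asserts they are independent hyperbolic elements of $\overline{G}\curvearrowright\overline{T}$; in case (2) it exhibits the explicit pair $g'g$, $g'h$ (with $g'\in G_{\hat u}\setminus\langle G_{\hat e},R_{\hat u}\rangle$), whose axes both contain $[u,v]$ but diverge at $v$ because $f$, $gf$, $hf$ survive as three distinct edges at $\bar v$; in case (3) it uses the pair $t$, $gt$, whose axes separate at $v$ along $e$ versus $ge$. You instead first prove a clean reduction -- for an acylindrical tree action, two hyperbolic elements with distinct axes already force alternative (3) of Theorem \ref{acylhypgrpclass}, via the observation that a virtually cyclic group containing a hyperbolic element preserves a single line which is the common axis of all its hyperbolic elements -- and then produce the second hyperbolic element by conjugating the first (cases (2) and (3)) or by pushing forward to $\pi_1(\Gamma)=F_{b_1(\Gamma)}$ (case (1)). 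Your route buys two things: in case (1) the surjection onto the free group yields non-elementarity without having to verify that the axes of two distinct stable letters are independent, a point the paper passes over quickly and which really does need the distinct-axes-implies-independent reduction you make explicit; in cases (2) and (3) the conjugation argument isolates exactly where the extra hypotheses enter (the third $\hat f$-direction edge at $\bar v$ in (2) to escape both axis edges, and immersedness in (3) to keep the two axis edges at $\bar v$ in different oriented-edge orbits). The paper's route buys concreteness: the two hyperbolic elements are written down and their axes visibly overlap only in the finite segment $[u,v]$, respectively diverge at $v$. Both arguments rest on the same underlying machinery -- Proposition \ref{prop: quotient_tree} and Theorem \ref{thm: quotientActionAcyl} for the tree structure and acylindricity of the quotient action, and Lemma \ref{lemma: valence in quotient} (your displayed criterion for when two edges at $v$ become identified in $\overline{T}$) for the survival of enough branching -- so the difference is in how the final ``two independent hyperbolic elements'' are certified, not in the geometric content.
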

\begin{proof}
The proof follows directly from Theorem \ref{thm: quotientActionAcyl} and Proposition \ref{prop: nonelementary}.
\end{proof}

We remark that if $(\Gamma,\mathcal{G})$ is finite (i.e. $\Gamma$ has both finite valence and finite diameter), then $G\curvearrowright T$ is a $(1,C)$-acylindrical action if and only if every edge group in $(\Gamma,\mathcal{G})$ is finite. Thus we can obtain the following two particular corollaries as immediate applications of the above theorem.

\begin{corollary}\label{coro: quotient of amalgamation}
Let $G=A\ast_C B$ be a free amalgamated product where $C$ is finite, and let $T$ be its Bass-Serre tree. If $N$ is generated by an equivariant family of subgroups $\{R_v\}$. Then the quotient action $G/N\curvearrowright T/N$ is a non-elementary acylindrical tree action if $\{R_v\}$ satisfies the following:
\begin{enumerate}
    \item $\langle C, R_A\rangle\neq A$ and  $\langle C, R_B\rangle\neq B$; and
    \item there exists two different elements $g$ and $h$ in $ B\setminus \langle C, R_B\rangle$ such that $gh^{-1}\not\in \langle C, R_B\rangle$.
\end{enumerate}
\end{corollary}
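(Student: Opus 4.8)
The plan is to deduce this corollary directly from Theorem \ref{quotient_Thm} by specializing to the case where $(\Gamma,\mathcal{G})$ is the edge of groups realizing $G = A \ast_C B$. First I would set up the dictionary: the quotient graph $\Gamma = T/G$ is a single edge $\hat{e}$ with two vertices $\hat{u}$ and $\hat{v}$, where (after choosing the lift $\tilde\Gamma$ as in Remark \ref{rmk: orbitrep of equi-family}) the vertex groups are $G_{\hat{u}} = A$ and $G_{\hat{v}} = B$, the edge group is $G_{\hat{e}} = C$ (identified via the amalgamation with its images in $A$ and $B$), and the equivariant family restricts to subgroups $R_A \le A$ and $R_B \le B$ via the identification $R_{\hat{u}} = R_A$, $R_{\hat{v}} = R_B$. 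In this situation $b_1(\Gamma) = 0$ and $\Gamma$ contains no immersed circuit, so conditions (1) and (3) of Theorem \ref{quotient_Thm} are vacuous, and everything must come from condition (2).

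Next I would observe that the geodesic path $[\hat{u},\hat{v}]$ in $\Gamma$ is exactly the single edge $\hat{e}$, so the edges on this path adjacent to $\hat{u}$ and to $\hat{v}$ are both equal to $\hat{e}$; that is, in the notation of Theorem \ref{quotient_Thm} we have $\hat{e} = \hat{f}$. Hence the hypothesis "$\langle G_{\hat{e}}, R_{\hat{u}}\rangle \ne G_{\hat{u}}$ and $\langle G_{\hat{f}}, R_{\hat{v}}\rangle \ne G_{\hat{v}}$" translates precisely to "$\langle C, R_A\rangle \ne A$ and $\langle C, R_B\rangle \ne B$," which is hypothesis (1) of the corollary. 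Likewise the second part of condition (2) of Theorem \ref{quotient_Thm}, namely the existence of $g,h \in G_{\hat{v}} \setminus \langle G_{\hat{f}}, R_{\hat{v}}\rangle$ with $gh^{-1} \notin \langle G_{\hat{f}}, R_{\hat{v}}\rangle$, translates to the existence of two distinct $g,h \in B \setminus \langle C, R_B\rangle$ with $gh^{-1} \notin \langle C, R_B\rangle$ — which is exactly hypothesis (2) of the corollary. (The fact that $g$ and $h$ are forced to be distinct is automatic: if $g = h$ then $gh^{-1} = 1 \in \langle C, R_B\rangle$, contradicting the requirement.)

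Therefore the hypotheses of the corollary are equivalent to condition (2) of Theorem \ref{quotient_Thm}, and since $N = \langle R_v\rangle$ is generated by an equivariant family by assumption, Proposition \ref{prop: quotient_tree} gives that $T/N$ is a tree and Theorem \ref{quotient_Thm} gives that $G/N \curvearrowright T/N$ is a non-elementary acylindrical tree action, which is the conclusion. The only genuine point requiring care — the place where I would slow down — is verifying that the abstract amalgamated-product presentation $A \ast_C B$ really does correspond to the edge-of-groups graph of groups whose Bass-Serre tree is (equivariantly) the given tree $T$, so that the identifications $G_{\hat{u}} = A$, $G_{\hat{v}} = B$, $G_{\hat{e}} = C$, $R_{\hat{u}} = R_A$, $R_{\hat{v}} = R_B$ are legitimate; this is where Example \ref{example:free amalgamation} and the fundamental theorem of Bass-Serre theory (Theorem 2.20 in the excerpt) are invoked. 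Everything else is bookkeeping.
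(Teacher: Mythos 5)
Your proposal is correct and matches the paper's own argument: the paper proves this corollary in one line by specializing condition (2) of Theorem \ref{quotient_Thm} to the edge-of-groups case, exactly as you do (your version simply spells out the bookkeeping that the single edge $\hat{e}=\hat{f}$ carries edge group $C$ and the two vertex groups are $A$ and $B$). No gaps; the extra care you flag about identifying $T$ with the Bass--Serre tree of the edge of groups is handled implicitly in the paper via Bass--Serre theory.
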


\begin{proof}
The proof follows directly from condition (2) of Theorem \ref{quotient_Thm}.
\end{proof}

\begin{corollary}\label{coro: quotient of HNN}
Let $G=A\ast_{H\sim_{\phi} K}$ be an HNN-extension where $H,K$ are finite, and let $T$ be its Bass-Serre tree. If $N$ is generated by an equivariant family of subgroups $\{R_v\}$. Then the quotient action $G/N\curvearrowright T/N$ is a non-elementary acylindrical tree action if $\{R_v\}$ satisfies either $\langle H, R_A \rangle\neq A$ or $\langle K, R_A \rangle\neq A$.
\end{corollary}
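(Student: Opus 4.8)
The plan is to realize $G=A\ast_{H\sim_{\phi}K}$ as the fundamental group of a loop of groups and then apply condition (3) of Theorem \ref{quotient_Thm}. By Bass--Serre theory (see Example \ref{example:HNN extension}), the graph of groups $(\Gamma,\mathcal{G})$ associated to the action $G\curvearrowright T$ has underlying graph a single loop: one vertex $\hat{v}$ with vertex group $G_{\hat{v}}=A$, and one geometric edge $\hat{e}$ with $o(\hat{e})=t(\hat{e})=\hat{v}$, edge group $G_{\hat{e}}=H$, and the two monomorphisms $\alpha_{\hat{e}}\colon H\hookrightarrow A$ (the given inclusion of $H$) and $\alpha_{\overline{\hat{e}}}\colon H\hookrightarrow A$ (the inclusion of $K$ precomposed with $\phi$). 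In particular $\Gamma$ contains an immersed circuit, namely the length-one closed path $(\hat{e})$, which is immersed because $\hat{e}\neq\overline{\hat{e}}$ by the graph axioms.

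First I would record that the edges of $\Gamma$ adjacent to $\hat{v}$ lying on this circuit are exactly $\hat{e}$ and its inverse $\overline{\hat{e}}$, and that under the identifications above one has $\langle\alpha_{\hat{e}}(G_{\hat{e}}),R_{\hat{v}}\rangle=\langle H,R_A\rangle$ and $\langle\alpha_{\overline{\hat{e}}}(G_{\hat{e}}),R_{\hat{v}}\rangle=\langle K,R_A\rangle$ as subgroups of $A=G_{\hat{v}}$. Consequently, the hypothesis that $\langle H,R_A\rangle\neq A$ or $\langle K,R_A\rangle\neq A$ is precisely the assertion that there is a vertex $\hat{v}$ and an edge adjacent to $\hat{v}$ on the circuit with $\langle G_{\hat{e}},R_{\hat{v}}\rangle\neq G_{\hat{v}}$. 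Thus the triple $(\Gamma,\mathcal{G},\{R_v\})$ satisfies condition (3) of Theorem \ref{quotient_Thm}.

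It then remains only to invoke Theorem \ref{quotient_Thm}: since $G\curvearrowright T$ is non-elementary acylindrical and $N=\langle R_v\rangle$ is generated by an equivariant family, the quotient $T/N$ is a tree (also by Proposition \ref{prop: quotient_tree}) and the quotient action $G/N\curvearrowright T/N$ is non-elementary acylindrical. This completes the proof.

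The only genuine obstacle is the orientation bookkeeping in the second step: one must verify that the two distinct embeddings of the loop's edge group into the vertex group $A$ are exactly the copies of $H$ and of $K$, so that the single disjunctive hypothesis ``$\langle H,R_A\rangle\neq A$ or $\langle K,R_A\rangle\neq A$'' matches the ``there exists an edge on the circuit'' clause of condition (3). Beyond this identification the corollary is an immediate specialization of Theorem \ref{quotient_Thm}, so no further work is needed.
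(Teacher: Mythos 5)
Your proposal is correct and follows exactly the route the paper takes: the paper's proof is the one-line observation that the corollary ``follows directly from condition (3)'' of the quotient theorem, and your write-up simply makes explicit the loop-of-groups identification and the matching of the two edge-group embeddings with $H$ and $K$. No further comment is needed.
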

\begin{proof}
The proof follows directly from condition (3) of Theorem \ref{quotient_Thm}.
\end{proof}

We will discuss a particular application of Corollary \ref{coro: quotient of amalgamation} to the outer automorphism group of Baumslag-Solitar group in subsection \ref{subsection: quotient action of OutBS}.

\section{Largest acylindrical action of graph of groups}\label{section: largest section}
In this section, our goal is to prove Theorem \ref{thmD}. This theorem provides sufficient conditions for the fundamental group of graph of groups admitting a largest acylindrical action. This is a particular case of Proposition 4.3 in \cite{hyperbolicstructure} which gives sufficient condition for existence of a largest action of a group acting cocompactly on a connected graph. Restricting to action on trees we give a simpler proof using Bass-Serre theory. We begin by briefly recall some definitions from \cite{hyperbolicstructure} defined by Abbott, Balasubramanya and Osin.

\begin{definition}
Let $X$ and $Y$ be two generating sets of group $G$. We say that $X$ is \textit{dominated} by $Y$, denoted by $X \preceq Y$, if 
$$\mbox{sup}_{y \in Y} |y|_X < \infty$$
where $|\cdot |_X = d_X(1, \cdot)$ denotes the word length with respect to $X$. It is clear that $\preceq$ defines a preorder on the generating sets of the group $G$ and induces an equivalence relation:
    \[X \sim Y \Longleftrightarrow X \preceq Y \hspace{0.2cm} \text{and} \hspace{0.2cm} Y \preceq X\]
Let $[X]$ denote the equivalence class of the generating set $X$ and $\mathcal{G}(G)$ be the set of all equivalence classes of generating sets of $G$. The preorder $\preceq$ induces a partial order $\preccurlyeq$ on $\mathcal{G}(G)$ by the rule
\[[X] \preccurlyeq [Y] \Longleftrightarrow X \preceq Y.\]
\end{definition}

Given a group $G$ with a generating set $X$, we denote by $\Gamma(G,X)$ the Cayley graph of $G$ with respect to the generating set $X$.

\begin{definition}
Let $\mathcal{AH}(G)$ be the set of equivalence classes $[X]\in \mathcal{G}(G)$ such that $\Gamma(G,X)$ is hyperbolic and the action $G\curvearrowright \Gamma(G,X)$ is acylindrical. We say an equivalence class of generating sets is \textit{largest} if it is the largest element in $\mathcal{A}\mathcal{H}(G)$.
\end{definition}

\begin{definition}\cite[Section 3]{hyperbolicstructure}
We say two actions of a group $G$ on metric spaces $S$ and $R$ are \textit{equivalent}, denoted by $G \curvearrowright S \sim G \curvearrowright R$, if there exists a coarsely $G$-equivariant quasi-isometry $S \to R$.
\end{definition}

The action of a group $G$ on a metric space $S$ is said to be \textit{cobounded} if there exists a bounded subset $B \subseteq S$ such that $S = \bigcup_{g \in G} gB$. Given a cobounded acylindrical action of $G$ on a hyperbolic space $S$, the following lemma provides a (possibly infinite) generating set $X$ of $G$ such that $\Gamma(G,X)$ is equivariantly quasi-isometric to $S$.

\begin{lemma}\cite[Lemma 3.11]{hyperbolicstructure}\label{lemma: equivaction}
Let $G$ be a group acting coboundedly on a geodesic metric space $S$. Let $B \subseteq S$ be a bounded subset such that $\bigcup_{g \in G} gB = S$. Let $D =$ diam$(B)$ and let $b$ be any point of $B$. Then the group $G$ is generated by the set
\[X = \{g \in G\hspace{0.1cm}|\hspace{0.1cm} d_S(b, gb) \leq 2D + 1\}\]
and the natural action $G\curvearrowright \Gamma(G, X)$ is equivalent to $G \curvearrowright S$.
\end{lemma}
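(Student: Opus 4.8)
The statement to prove is Lemma 3.11 of \cite{hyperbolicstructure}: for a group $G$ acting coboundedly on a geodesic space $S$, with $B$ a bounded subset covering $S$ under the $G$-action, $D = \diam(B)$, and $b \in B$, the set $X = \{g \in G \mid d_S(b,gb) \leq 2D+1\}$ generates $G$ and the action $G \curvearrowright \Gamma(G,X)$ is equivalent to $G \curvearrowright S$. I will prove the two claims in turn.

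First I would show $X$ generates $G$. Fix $h \in G$; I want to write $h$ as a product of elements of $X$. Since $S$ is geodesic, pick a geodesic $\gamma$ from $b$ to $hb$ and subdivide it into segments of length at most $1$, giving points $b = p_0, p_1, \dots, p_n = hb$ with $d_S(p_{i-1}, p_i) \leq 1$. Since $\bigcup_{g} gB = S$, for each $i$ choose $g_i \in G$ with $p_i \in g_i B$; take $g_0 = 1$ (as $b = p_0 \in B$) and note we may take $g_n = h$ (since $hb \in hB$). Then for consecutive indices, $d_S(g_{i-1}b, g_i b) \leq d_S(g_{i-1}b, p_{i-1}) + d_S(p_{i-1},p_i) + d_S(p_i, g_i b) \leq D + 1 + D = 2D+1$, using that $g_{i-1}b$ and $p_{i-1}$ both lie in $g_{i-1}B$ (so are within $D$), and similarly $g_i b, p_i \in g_i B$. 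Hence $g_{i-1}^{-1}g_i \in X$ for each $i$, and $h = g_n = (g_0^{-1}g_1)(g_1^{-1}g_2)\cdots(g_{n-1}^{-1}g_n)$ is a product of elements of $X$. So $X$ generates $G$.

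Next I would establish the equivalence of actions, i.e.\ produce a coarsely $G$-equivariant quasi-isometry $\Gamma(G,X) \to S$. The natural candidate is the orbit map $\phi \colon G \to S$, $g \mapsto gb$, extended coarsely to $\Gamma(G,X)$. Coarse $G$-equivariance is immediate since $\phi(hg) = hgb = h\phi(g)$ exactly. For the quasi-isometry estimates: on one side, if $d_X(1,g) = n$, writing $g = x_1\cdots x_n$ with $x_i \in X$ and using the triangle inequality along the orbit points together with $d_S(b, x b) \leq 2D+1$ for $x \in X$, one gets $d_S(b, gb) \leq (2D+1)\, d_X(1,g)$, and by $G$-equivariance $d_S(gb, g'b) \leq (2D+1)\, d_X(g,g')$. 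On the other side, the generating-set argument above run with a geodesic from $gb$ to $g'b$ shows $d_X(g,g') \leq d_S(gb,g'b) + 1$ (the number of subdivision points plus one); combining gives the bi-Lipschitz-type bounds up to additive constants. Finally, coboundedness gives that $\phi(G) = Gb$ is $D$-coarsely dense in $S$ (every point of $S$ lies in some $gB$, hence within $D$ of $gb$), so $\phi$ is coarsely surjective. Thus $\phi$ is a coarsely $G$-equivariant quasi-isometry, and $G \curvearrowright \Gamma(G,X) \sim G \curvearrowright S$.

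The only mildly delicate point is bookkeeping the constants in the two-sided quasi-isometry estimate — in particular making sure the lower bound $d_X(g,g') \leq d_S(gb,g'b)+1$ comes out with the right additive slack from the subdivision argument, and that the choice $g_0 = 1$, $g_n = h$ is legitimate (it is, because $b \in B$ and $hb \in hB$). There is no real obstacle; everything follows from coboundedness plus the geodesic property of $S$ and repeated use of the triangle inequality.
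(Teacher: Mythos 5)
Your argument is correct: the generation of $G$ by $X$ via subdividing a geodesic from $b$ to $hb$ into unit segments and picking translates of $B$ covering the subdivision points, followed by the orbit-map quasi-isometry estimates, is the standard \v{S}varc--Milnor-type proof, and all the constants check out (in particular $d_X(g,g')\leq d_S(gb,g'b)+1$ from the subdivision and $d_S(gb,g'b)\leq (2D+1)d_X(g,g')$ from the triangle inequality, with $D$-coarse density of the orbit giving coarse surjectivity). Note that the paper itself states this lemma without proof, citing Lemma 3.11 of Abbott--Balasubramanya--Osin, so there is no in-paper argument to compare against; your proof is essentially the one given in that reference.
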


As an application of the above lemma, we say that a particular cobounded acylindrical action $G\curvearrowright S$ on a hyperbolic space is largest, when the equivalence class of the generating set associated to this action through the above correspondence, $[X]$, that is the largest element in $\mathcal{AH}(G)$.

We now focus on cobounded acylindrical tree actions. Let $(\Gamma,\mathcal{G})$ be a finite graph of groups, and let $T$ be the Bass-Serre tree of $(\Gamma,\mathcal{G})$. Denote the fundamental group of graph of groups $\pi_1(\Gamma,\mathcal{G})$ by $G$. We first note that $\Gamma$ can be lifted to a subtree $\tilde{\Gamma}$ in $T$ such that $G\cdot \tilde{\Gamma}=T$, i.e., $\tilde{\Gamma}$ is the fundamental domain of $T$ under the action $G\curvearrowright T$. Since $\Gamma$ is a finite graph, we have $\tilde{\Gamma}$ is of finite diameter and $G\curvearrowright T$ is a cobounded action. Let $D=\mbox{diam}(\tilde{\Gamma})$ and let $\tilde{v}_0$ be a base vertex of $\tilde{\Gamma}$. Then Lemma \ref{lemma: equivaction} provides a generating set $X=\{g\in G| d_T(\tilde{v}_0,g\tilde{v}_0)\leq 2D+1\}$ of $G$. Further, we have $G\curvearrowright T\sim G\curvearrowright \Gamma(G,X)$. Note that $G$, as the fundamental group of $(\Gamma,\mathcal{G})$, has the usual generating set $Y=(\sqcup_{v\in V(\Gamma)} G_v)\sqcup \{e\in E(\Gamma)| \ e \ \mbox{is a stable letter}\}$. The following lemma shows that these two generating sets $X$ and $Y$ are equivalent.

\begin{lemma}\label{lemma: eqGenSets}
Let $X$ and $Y$ be the generating sets of $G$ defined as above, then  $X \sim Y$.
\end{lemma}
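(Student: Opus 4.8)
The plan is to show $X \sim Y$ by verifying both $X \preceq Y$ and $Y \preceq X$, using the geometry of the Bass-Serre tree and the fact that $\tilde\Gamma$ is a fundamental domain of finite diameter $D$.

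For $X \preceq Y$, I would argue that every generator in $X$ has bounded $Y$-length. Take $g \in X$, so $d_T(\tilde v_0, g\tilde v_0) \le 2D + 1$. Since $\tilde\Gamma$ is a fundamental domain, the geodesic $[\tilde v_0, g\tilde v_0]$ passes through at most $2D+2$ translates of $\tilde\Gamma$, and at each ``transition'' between consecutive fundamental domains one picks up either an element of a vertex group $G_v$ (an edge fold inside a translate of $\tilde\Gamma$) or a stable letter $e \in E(\Gamma)$ crossing between translates; more precisely, using the normal form / path description of elements of $\pi_1(\Gamma,\mathcal G)$ in $F(\Gamma,\mathcal G)$, one writes $g$ as a word $g_0 e_1 g_1 \cdots e_n g_n$ of type $c$ where $c$ is the image in $\Gamma$ of the geodesic $[\tilde v_0, g\tilde v_0]$, hence $n \le 2D+1$ and each $g_i \in Y$ and each $e_i \in Y$. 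Thus $|g|_Y \le 2(2D+1) + 1$, a bound independent of $g$, giving $X \preceq Y$.

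For $Y \preceq X$, I would bound $|y|_X$ for each $y \in Y$. If $y = e$ is a stable letter, then $d_T(\tilde v_0, e\tilde v_0)$ is bounded: the edge of $\tilde\Gamma$ corresponding to $e$ is adjacent to $\tilde v_0$ up to distance $D$, and $e$ moves that edge by one step, so $d_T(\tilde v_0, e\tilde v_0) \le 2D+1$, meaning $e \in X$ already and $|e|_X = 1$. If $y \in G_v$ for some vertex $v \in V(\Gamma)$, then $y$ fixes the lift $\tilde v$ of $v$ in $\tilde\Gamma$, so $d_T(\tilde v_0, y\tilde v_0) \le d_T(\tilde v_0, \tilde v) + d_T(\tilde v, y\tilde v_0) = d_T(\tilde v_0, \tilde v) + d_T(y\tilde v, y\tilde v_0) = 2 d_T(\tilde v_0, \tilde v) \le 2D \le 2D+1$, so again $y \in X$ and $|y|_X = 1$. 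Since every generator in $Y$ lies in $X$, we get $\sup_{y \in Y}|y|_X = 1 < \infty$, hence $Y \preceq X$.

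Combining the two directions gives $X \sim Y$. I expect the main (only mild) obstacle to be the $X \preceq Y$ direction: one must carefully translate the metric bound $d_T(\tilde v_0, g\tilde v_0) \le 2D+1$ into a bounded-length word in the Bass-Serre generators, which requires invoking the correspondence between geodesics in $T$ and words of type $c$ in $F(\Gamma,\mathcal G)$ from the Bass-Serre dictionary recalled in Section \ref{section: BS theory}, together with the fact that a geodesic of length $\le 2D+1$ in $T$ projects to a path of length $\le 2D+1$ in $\Gamma$ and meets boundedly many translates of $\tilde\Gamma$. The reverse direction is essentially immediate once one notes $Y \subseteq X$.
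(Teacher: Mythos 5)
Your proof is correct and follows essentially the same route as the paper: the containment $Y \subseteq X$ (which you verify in more detail than the paper, which simply cites Bass--Serre theory) gives one direction, and the normal-form description of elements $g$ with $d_T(\tilde{v}_0, g\tilde{v}_0)\le 2D+1$ as words $g_1e_1\cdots g_ne_nh$ of bounded length gives the other. The only discrepancy is notational: under the paper's convention $X\preceq Y$ means $\sup_{y\in Y}|y|_X<\infty$, so the containment $Y\subseteq X$ proves $X\preceq Y$ (not $Y\preceq X$) and the normal-form bound proves $Y\preceq X$; you have the two labels swapped, but since you establish both inequalities the conclusion $X\sim Y$ is unaffected.
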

\begin{proof}
From Bass-Serre theory, we note that $Y\subset X$. This implies that $\sup_{Y \in Y}|y|_X = 1$. Therefore $X \preceq Y$. Conversely, without loss of generality, we may assume that $\tilde{v}_0$ is labeled by the group $G_{v_0}$. As discussed in Remark 1.18 of \cite{Bass}, each vertex of $T$ at distance $n$ from $G_{v_0}$ can be represented by a unique reduced path in $\Gamma$ of the form $g_1e_1g_2e_2...g_ne_n$ where $o(e_1)=v_0$ and $g_i$ is contained in the transversal set $\Sigma_{e_i}$ for $G_{o(e_i)}/\alpha_{\overline{e}_i}(G_{e_i})$. Thus, for each $g\in X$ we have $g\tilde{v}_0=gG_{v_0}=g_1e_1g_2e_2...g_ne_nG_{v_0}$ where $n=d_T(\tilde{v}_0,g\tilde{v}_0)\leq 2D+1$. This implies that $g=g_1e_1g_2e_2...g_ne_nh$ for some $h\in G_{v_0}$, which shows that $g$ has word length at most $4D+3$ with respect to the generating set $Y$. Therefore we have $Y \preceq X$.

\end{proof}

Before giving the proof of our main theorem, we record the following proposition which gives sufficient conditions for an action to be the largest. Our goal is to apply this result to acylindrical tree actions.

\begin{proposition}\cite[Proposition 5.2]{largestaction}\label{prop: largestelt}
Let $G$ be a group, $\{H_1,\cdots, H_n\}$ a finite collection of subgroups of $G$, and $F$ be a finite subset of $G$ such that $F \cup (\bigcup_{i=1}^{n} H_i)$ generates $G$. Assume that:
\begin{enumerate}
    \item $\Gamma(G, F \cup (\bigcup_{i=1}^{n} H_i))$ is hyperbolic and the action of $G$ on it is acylindrical.
    \item Each $H_i$ is elliptic in every acylindrical action of $G$ on a hyperbolic space.
\end{enumerate}
Then $[F \cup (\bigcup\limits_{i=1}^{n} H_i)]$ is the largest element in $\mathcal{A}\mathcal{H}(G)$.
\end{proposition}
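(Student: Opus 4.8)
The plan is to prove that the class $[A]$, where I abbreviate $A := F \cup \left(\bigcup_{i=1}^{n} H_i\right)$, is the greatest element of the poset $(\mathcal{AH}(G), \preccurlyeq)$ by a direct verification from the definitions. By hypothesis (1) together with the assumption that $A$ generates $G$, we immediately have $[A] \in \mathcal{AH}(G)$, so the only thing left is to show that $[Y] \preccurlyeq [A]$ for every $[Y] \in \mathcal{AH}(G)$. Unwinding the order, $[Y] \preccurlyeq [A]$ is by definition $Y \preceq A$, which means $\sup_{a \in A} |a|_Y < \infty$. Thus the entire problem reduces to producing, for an arbitrary acylindrical hyperbolic generating set $Y$, a uniform bound on the $Y$-word length of the elements of $A$.

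First I would dispose of the finite part. Each $f \in F$ is a finite word in $Y$, and since $F$ is finite, $\max_{f \in F} |f|_Y < \infty$ trivially. The substance is bounding $\sup_{h \in H_i} |h|_Y$ for each $i$, and here I would invoke hypothesis (2). Since $[Y] \in \mathcal{AH}(G)$, the Cayley graph $\Gamma(G,Y)$ is hyperbolic and the action $G \curvearrowright \Gamma(G,Y)$ is acylindrical; this is precisely an acylindrical action of $G$ on a hyperbolic space, so (2) tells us $H_i$ acts elliptically, i.e.\ the orbit of the base vertex $1$ under $H_i$ is bounded. Because the action is by left multiplication this orbit is exactly the vertex set $H_i \subseteq G = V(\Gamma(G,Y))$, and since $1 \in H_i$ we get $\sup_{h \in H_i} |h|_Y = \sup_{h \in H_i} d_Y(1,h) < \infty$.

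Combining the two estimates gives
\[
\sup_{a \in A} |a|_Y = \max\left\{\ \max_{f \in F} |f|_Y,\ \max_{1 \le i \le n} \sup_{h \in H_i} |h|_Y\ \right\} < \infty,
\]
so $Y \preceq A$ and hence $[Y] \preccurlyeq [A]$. As $[Y]$ ranges over all of $\mathcal{AH}(G)$, this shows $[A]$ dominates every class, and therefore $[A]$ is the largest element of $\mathcal{AH}(G)$, as claimed.

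I would flag two points of care rather than of computation, since the argument is genuinely short. The first is the direction of the preorder: $Y \preceq A$ controls the word length of the elements of $A$ (not of $Y$), so it is the generators of the candidate largest action that must be shown short in every competing action; inverting this would reverse the whole proof. The second is the meaning of ``$H_i$ is elliptic,'' which I take in the standard sense that some (equivalently every) $H_i$-orbit is bounded — this is exactly the formulation yielding the word-length bound. The hard part, such as it is, is conceptual bookkeeping around these two definitions; if instead one only knew that each individual element of $H_i$ is elliptic, one would additionally apply the classification of acylindrical actions (Theorem \ref{acylhypgrpclass}) to the restricted action $H_i \curvearrowright \Gamma(G,Y)$ to rule out a loxodromic in $H_i$ and thereby upgrade elementwise ellipticity to boundedness of the orbit.
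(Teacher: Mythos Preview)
Your proof is correct. Note, however, that the paper does not actually prove this proposition: it is quoted verbatim from \cite[Proposition~5.2]{largestaction} and simply recorded as a tool to be applied, so there is no ``paper's own proof'' to compare against. That said, your argument is exactly the intended one --- unwind the definition of the partial order, use finiteness of $F$ for the trivial part, and invoke ellipticity of each $H_i$ in the acylindrical action $G \curvearrowright \Gamma(G,Y)$ to bound $\sup_{h \in H_i} |h|_Y$ --- and your care about the direction of $\preceq$ is well placed, since that is the only place one can slip.
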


We are now ready to prove our main theorem in this section.

\begin{theorem}\label{thm: largestaction}
Let $(\Gamma, \mathcal{G})$ be a finite graph of groups, and let $T$ be the Bass-Serre tree of $(\Gamma, \mathcal{G})$. Denote the fundamental group $\pi_1(\Gamma, \mathcal{G})$ by $G$. If the action $G\curvearrowright T$ is acylindrical, and if each vertex group acts elliptically whenever $G$ acts acylindrically on any hyperbolic space $S$, then $G$ admits the largest acylindrical action on $T$.
\end{theorem}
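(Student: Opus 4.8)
The plan is to apply Proposition~\ref{prop: largestelt} directly to the generating set of $G$ coming from its graph of groups structure. First I would write $G = \pi_1(\Gamma,\mathcal{G})$ using definition (b), so that $G$ is generated by the finite collection of vertex groups $\{G_{v} : v \in V(\Gamma)\}$ together with the finite set $F = \{e \in E(\Gamma) : e \text{ is a stable letter}\}$ of stable letters; since $(\Gamma,\mathcal{G})$ is a finite graph of groups, both collections are finite, so we are exactly in the setting of Proposition~\ref{prop: largestelt} with $\{H_1,\dots,H_n\} = \{G_v : v\in V(\Gamma)\}$ and this $F$. Note $F \cup \bigcup_v G_v = Y$, the usual generating set considered in Lemma~\ref{lemma: eqGenSets}.

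Next I would verify the two hypotheses of Proposition~\ref{prop: largestelt}. For hypothesis (1): by Lemma~\ref{lemma: equivaction}, the cobounded action $G \curvearrowright T$ is equivalent to $G \curvearrowright \Gamma(G,X)$, where $X = \{g \in G : d_T(\tilde v_0, g\tilde v_0) \le 2D+1\}$ and $D = \operatorname{diam}(\tilde\Gamma)$; by Lemma~\ref{lemma: eqGenSets}, $X \sim Y$, hence $\Gamma(G,Y)$ is equivariantly quasi-isometric to $\Gamma(G,X)$ and therefore to $T$. Since $T$ is a tree it is $0$-hyperbolic, so $\Gamma(G,Y)$ is hyperbolic, and since $G \curvearrowright T$ is acylindrical by assumption, acylindricity transfers across the equivariant quasi-isometry to $G \curvearrowright \Gamma(G,Y)$. (Here one uses that acylindricity is a quasi-isometry invariant of cobounded actions, which follows from Theorem~\ref{thm: acyl=kcacyl} together with Lemma~\ref{lemma: equivaction}.) Hypothesis (2) is precisely the standing assumption of the theorem: each vertex group $G_v$ acts elliptically in every acylindrical action of $G$ on a hyperbolic space $S$, i.e.\ each $H_i = G_v$ is elliptic in every such action.

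With both hypotheses checked, Proposition~\ref{prop: largestelt} yields that $[Y] = [F \cup \bigcup_v G_v]$ is the largest element of $\mathcal{AH}(G)$. Finally I would translate this back into the language of actions: since $[X] = [Y]$ and the action $G\curvearrowright \Gamma(G,X)$ is equivalent to $G \curvearrowright T$, and the largest element of $\mathcal{AH}(G)$ corresponds (via Lemma~\ref{lemma: equivaction} and the remark following it) to a largest cobounded acylindrical action, we conclude that $G \curvearrowright T$ is the largest acylindrical action of $G$, as claimed.

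The main obstacle I anticipate is making the transfer of acylindricity and hyperbolicity across the equivalence $G\curvearrowright T \sim G \curvearrowright \Gamma(G,Y)$ fully rigorous, and in particular checking that the ``largest'' relation detected at the level of generating-set equivalence classes in $\mathcal{AH}(G)$ genuinely corresponds to the action $G\curvearrowright T$ being largest among acylindrical actions; this is the content hidden in Lemma~\ref{lemma: equivaction} and the discussion preceding Proposition~\ref{prop: largestelt}, and the proof should cite these carefully rather than re-derive them. Everything else — identifying the generating set $Y$ with $F \cup \bigcup_v G_v$, finiteness of $F$ and of the vertex-group collection, and $0$-hyperbolicity of $T$ — is routine.
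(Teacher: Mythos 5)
Your proposal is correct and follows essentially the same route as the paper: identifying the generating set $F \cup \bigcup_v G_v$ from the graph of groups structure, using Lemma~\ref{lemma: eqGenSets} (via Lemma~\ref{lemma: equivaction}) to transfer hyperbolicity and acylindricity from $T$ to the Cayley graph, and then invoking Proposition~\ref{prop: largestelt}. The only cosmetic difference is that you spell out the intermediate generating set $X$ explicitly, which the paper folds into its citation of Lemma~\ref{lemma: eqGenSets}.
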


\begin{proof}
Let $F$ be the set of stable letters of $\Gamma$, and let $\{H_1,...,H_n\}$ be the collection of vertex groups in $(\Gamma,\mathcal{G})$. Since $\Gamma$ is a finite graph, we have $F$ is finite and $\{H_1,...,H_n\}$ is a finite collection. Further, by Bass-Serre theory, we have $F \cup (\bigcup_{i=1}^{n} H_i)$ generates $G$. By Lemma \ref{lemma: eqGenSets}, we have the action $G\curvearrowright T$ is equivalent to $G\curvearrowright \Gamma(G, F \cup (\bigcup_{i=1}^{n} H_i))$. By Lemma 3.9 of \cite{hyperbolicstructure}, this is equivalent to $T\sim \Gamma(G, F \cup (\bigcup_{i=1}^{n} H_i))$. Thus there exists a coarsely $G$-equivariant quasi-isometry $T\to \Gamma(G, F \cup (\bigcup_{i=1}^{n} H_i))$. Note that hyperbolicity is preserved under quasi-isometries, and it is a fact that acylindricity of an action is preserved under equivariant quasi-isometries. Therefore we have $\Gamma(G, F \cup (\bigcup_{i=1}^{n} H_i))$ is hyperbolic and the action $G\curvearrowright \Gamma(G, F \cup (\bigcup_{i=1}^{n} H_i))$ is acylindrical. Thus the theorem follows directly from Proposition \ref{prop: largestelt} above.
\end{proof}

\begin{corollary}\label{coro: largestaction}
Let $(\Gamma,\mathcal{G})$ be a finite graph of groups, and let $T$ be the Bass-Serre tree of $(\Gamma,\mathcal{G})$. Denote the fundamental group $\pi_1(\Gamma,\mathcal{G})$ by $G$. If the action $G\curvearrowright T$ is acylindrical, and if each vertex group consists of finite order elements only, then $G$ admits the largest acylindrical action on $T$.
\end{corollary}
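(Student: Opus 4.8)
The plan is to deduce Corollary \ref{coro: largestaction} directly from Theorem \ref{thm: largestaction} by checking that its sole nontrivial hypothesis --- that every vertex group acts elliptically in every acylindrical action of $G$ on a hyperbolic space --- follows from the assumption that each vertex group consists only of finite-order elements. So the real content is a single observation: a group all of whose elements have finite order cannot act acylindrically on a hyperbolic space with an unbounded orbit.

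First I would recall Theorem \ref{acylhypgrpclass}: if $G$ acts acylindrically on a hyperbolic space $S$, then either $G$ has bounded orbits, or $G$ is virtually cyclic containing a hyperbolic element, or $G$ contains infinitely many independent hyperbolic elements. The point is that in the latter two cases $G$ contains a hyperbolic (loxodromic) element $g$, and such a $g$ has infinite order, since $n \mapsto g^n s$ is a quasi-isometric embedding of $\Z$ and in particular injective. Next, I would apply this not to $G$ itself but to an arbitrary vertex group $H_i$ equipped with the restricted action $H_i \curvearrowright S$; the restriction of an acylindrical action to a subgroup is again acylindrical (this is immediate from Definition \ref{acyldef}, since the defining finiteness condition only gets easier when one passes to a subgroup). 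Since every element of $H_i$ has finite order by hypothesis, $H_i$ contains no hyperbolic element, so cases (2) and (3) of Theorem \ref{acylhypgrpclass} are excluded, leaving case (1): $H_i$ has bounded orbits in $S$. By the remark following Theorem \ref{thm: acyl=kcacyl} (or the standard fact for trees, and the general hyperbolic-space version), bounded orbits means $H_i$ fixes a point, i.e.\ $H_i$ acts elliptically.

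With that established, the hypotheses of Theorem \ref{thm: largestaction} are satisfied verbatim --- the action $G \curvearrowright T$ is acylindrical by assumption, and each vertex group acts elliptically whenever $G$ acts acylindrically on a hyperbolic space $S$ --- so Theorem \ref{thm: largestaction} yields that $G$ admits the largest acylindrical action on $T$, which is exactly the conclusion.

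I do not anticipate a genuine obstacle here; the only points requiring a sentence of care are (a) that ``bounded orbits $\Rightarrow$ elliptic'' holds for actions on general hyperbolic spaces and not merely trees (one should cite the appropriate statement, or simply invoke that in our application $S$ may be taken such that this holds, but cleanest is to note it follows from acylindricity together with the classification), and (b) that an acylindrical action restricts to an acylindrical action on any subgroup. Both are routine, so the corollary is essentially a one-line consequence of Theorem \ref{thm: largestaction} together with Theorem \ref{acylhypgrpclass}.
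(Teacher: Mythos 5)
Your proposal is correct and follows essentially the same route as the paper: both arguments observe that a vertex group of finite-order elements contains no loxodromic element in any acylindrical action on a hyperbolic space, invoke the classification of Theorem \ref{acylhypgrpclass} to conclude each vertex group is elliptic, and then apply Theorem \ref{thm: largestaction}. Your additional remarks (that acylindricity passes to subgroups, and that ``elliptic'' here means bounded orbits) merely make explicit points the paper leaves implicit.
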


\begin{proof}
Since each vertex group consists of finite order elements only, then it can not contain any hyperbolic elements whenever $G$ acts acylindrically on any hyperbolic space $S$. By Theorem \ref{acylhypgrpclass}, this is equivalent to the statement that each vertex group acts elliptically whenever $G$ acts acylindrically on any hyperbolic space $S$. Thus the corollary follows directly from Theorem \ref{thm: largestaction} above.
\end{proof}

We note that the outer automorphism group of some Baumslag-Solitar groups is a family of examples that satisfies Corollary \ref{coro: largestaction} and thus Theorem \ref{thm: largestaction}. We will descirbe these groups more explicitly in subsection \ref{subsection: compare actions of OutBS}.

\section{Outer automorphism group of \texorpdfstring{$\mbox{BS}(p,q)$}{Gpq}}\label{section: OutBS}

Recall the Baumslag-Solitar group $\mbox{BS}(p, q)$ is defined to be the group with the following presentation
\[\mbox{BS}(p,q)=\langle x,t\ |\ tx^pt^{-1}=x^q\rangle\]
where $p,q\in \Z\setminus\{0\}$. By interchanging $t\leftrightarrow t^{-1}$, one may always assume $1 \leq p\leq |q|$. Throughout this section, we only consider the non-solvable Baumslag-Solitar groups i.e. we assume that $1 < |p| \leq |q|$. We note that presentations for these outer automorphism groups are known (see Remark \ref{remark: presentation of OutBS} and subsection \ref{subsection: two graphs of Out(BS)} for details).
\begin{remark}\label{remark: presentation of OutBS}
We have the following presentations for Out$(\mbox{BS}(p,q))$ when $p$ does not divide $q$ properly (details can be found in Section 3 of \cite{AutomorphismBSgroups} or \cite{pqnocommonfactors} and \cite{pdoesnotdivideq}).
\begin{itemize}
    \item[-] $\mbox{Out(BS}(p,q)) = \mathbb{Z}_{2|p-q|} \rtimes \mathbb{Z}_2$ if $p$ does not divide $q$.
    \item[-] $\mbox{Out(BS}(p,q)) = \mathbb{Z} \rtimes (\mathbb{Z}_2 \times \mathbb{Z}_2)$ if $p=q$.
    \item[-] $\mbox{Out(BS}(p,q)) = \mathbb{Z}_{2p} \rtimes \mathbb{Z}_2$ if $p = -q$.
\end{itemize}
\end{remark}
We note that Out$(\mbox{BS}(p,q))$ is either finite or virtually $\mathbb{Z}$ in any of the above cases, so it suffices to investigate the case when $p$ properly divides $q$. In this case, Clay showed that Out$(\mbox{BS}(p,q))$ admits a ray of groups structure $(\Gamma_1,\mathcal{G}_1)$ pictured as in Figure \ref{fig: Out(BS) ray}. Further, he showed that $(\Gamma_1,\mathcal{G}_1)$ can be collapsed to an edge of groups $(\Gamma_2,\mathcal{G}_2)$ pictured as in Figure \ref{fig: Out(BS) segment}, and he used $(\Gamma_2,\mathcal{G}_2)$ to compute the presentation of Out$(\mbox{BS}(p,q))$ via Bass-Serre theory. We will describe this presentation in subsection \ref{subsection: two graphs of Out(BS)}. In this section, our first goal is to compare the two tree actions that arise from these two graph of groups structures in the sense of Section \ref{section: largest section}. Next, we apply Corollary \ref{coro: quotient of amalgamation} to Out$(\mbox{BS}(p,q))$ and we show that the quotient action of a non-elementary acylindrical tree action of Out$(\mbox{BS}(p,q))$ by some equivariant family of subgroups is a non-elementary acylindrical tree action.

\underline{Assumption}. For the remainder of this section, we will assume that $q=pn$ where $p, |n|>1$.

\subsection{Deformation space of trees} We begin by recalling the definition of deformation space of trees, which is the tool Clay used to compute the presentation of Out$(\mbox{BS}(p,q))$.

\begin{definition}\cite[Definition 1.1]{AutomorphismBSgroups}
Let $(\Gamma,\mathcal{G})$ be a graph of groups. An edge $e\in E(\Gamma)$ is \textit{collapsible} if $o(e)\neq t(e)$ and $G_{t(e)}=\alpha_e(G_e)$. If one collapses $\{e,\overline{e}\}$ to the vertex $o(e)$, the resulting graph of groups is said to be obtained from $(\Gamma,\mathcal{G})$ by a \textit{collapse move}. The reverse of this move is called an \textit{expansion move}. A finite sequence of these moves is called an \textit{elementary deformation}. 
\end{definition}

\begin{remark}
We note that if an edge $e$ (not a loop) has $G_{t(e)}=\alpha_e(G_e)$, then $e$ contributes to a free amalgamated product $G_{o(e)}\ast_{G_e} G_{e}$ in $\pi_1(\Gamma, \mathcal{G})$. Therefore the above two moves correspond to the graph of groups isomorphism $G_{o(e)}\ast_{G_e} G_{e}\cong G_{o(e)}$. Thus we can always collapse this edge in $\Gamma$ without changing the fundamental group. See Figure \ref{fig:collapse}.
\end{remark}

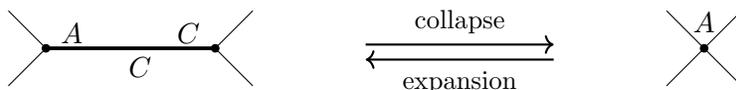
\begin{figure}[ht]
    \centering
\begin{tikzpicture}
      \tikzset{enclosed/.style={draw, circle, inner sep=0pt, minimum size=.1cm, fill=black}}
      
      \node[enclosed, label={right, yshift=.2cm: \small $A$}] at (2.25,3.25) {};
      \node[enclosed, label={left, yshift=.2cm: \small$C$}] at (4.5,3.25) {};
      \node[label={below, yshift=-0.3cm:}] at (3.5,3) {\small$C$}; 
      \node[enclosed] at (11,3.25) {};
       \node[label={below, yshift=-0.3cm:}] at (11,3.6) {\small$A$};

      \draw[line width=0.5mm] (2.25,3.25) -- (4.5,3.25) node[midway, sloped, above] {};
      \draw (2.25,3.25) -- (1.75, 3.75) node[midway, right] {};
      \draw (2.25,3.25) -- (1.75, 2.75) node[midway, right] {};
      \draw (4.5,3.25) -- (5, 3.75) node[midway, right] {};
      \draw (4.5,3.25) -- (5, 2.75) node[midway, right] {};

      \draw[->, thick] (6.5, 3.3) -- (9, 3.3) node[midway, above] {\footnotesize collapse};
      \draw[<-, thick] (6.5, 3.1) -- (9, 3.1) node[midway, below] {\footnotesize expansion};
      
      \draw (11,3.25) -- (10.5, 3.75) node[midway, left] {};
      \draw (11,3.25) -- (10.5, 2.75) node[midway, left] {}; 
      \draw (11,3.25) -- (11.5, 3.75) node[midway, right]{};
      \draw (11,3.25) -- (11.5, 2.75) node[midway, right]{};
\end{tikzpicture}
    \caption{Example of collapse and expansion moves.}
    \label{fig:collapse}
\end{figure}

\begin{definition}
Given $G$-tree $T$, the \textit{unnormalized deformation space} $\mathcal{X}$ containing $T$ is the set of all metric $G$-trees that have the same elliptic subgroups as $T$. We can projectivize $\mathcal{X}$ by taking the quotient of $\mathcal{X}$ under the action of $\mathbb{R}^{+}$ given by rescaling the metric on a given metric $G$-tree. The quotient $\mathcal{D}$ is called a \textit{deformation space} of $G$-trees.  
\end{definition}

Culler and Vogtmann's Outer space is a celebrated example of a deformation space for a finitely generated free group where the only elliptic subgroup is the trivial group. For now, the deformation space $\mathcal{D}$ is just a set, with no additional structure. However, we can view it as an infinite dimensional simplicial complex. In particular, each projectivized $G$-tree $T\in \mathcal{D}$ corresponds to a $n$-dimensional simplex where $n$ is the number of edges in the quotient graph $T/G$, and faces of this simplex correspond to projectivized $G$-trees $T'$ that can be obtained from $T$ by a sequence of collapse moves. We note that, by Forester's deformation theorem \cite[Theorem 1.1]{Foresterdeformation}, any two projectivized $G$-trees $T,T'\in \mathcal{D}$ (disregarding the metric) are related by an elementary deformation. Thus the deformation space $\mathcal{D}$ is in fact a connected simplicial complex. There is a natural action of Out$(G)$ on $\mathcal{D}$ given by precomposing of the action of $G$ on $T$ by an outer automorphism of $G$.

\subsection{Two graph of groups structures of \texorpdfstring{$\mbox{Out}(\mbox{BS}(p,q))$}{OutBSpq}}\label{subsection: two graphs of Out(BS)}
In this subsection, we describe the two graph of groups structures of Out$(\mbox{BS}(p,q))$, and we briefly explain how Clay computed the presentation of Out$(\mbox{BS}(p,q))$ (see Section 4.3 in \cite{AutomorphismBSgroups} for details).

Let $G_{p,q}$ denote the Baumslag-Solitar group $\mbox{BS}(p,q)$, and let $\mathcal{D}_{p,q}$ be the canonical deformation space of $G_{p,q}$.
First, Clay defined an $G_{p,q}$-invariant deformation retract $X_{p,q}$ of $\mathcal{D}_{p,q}$, where each vertex of $X_{p,q}$ corresponds to a particular type of reduced $G_{p,q}$-tree. He showed that \cite[Theorem 3.9]{AutomorphismBSgroups} this subcomplex $X_{p,q} \subseteq \mathcal{D}$ is in fact a tree. Furthermore, he showed that \cite[Proposition 4.1]{AutomorphismBSgroups} the quotient graph $X_{p,q}/G_{p,q}$ is an infinite ray as pictured in Figure \ref{fig: Out(BS) ray}. Thus, by Bass-Serre theory, the quotient graph $X_{p,q}/G_{p,q}$ admits a graph of groups structure $(\Gamma_1, \mathcal{G}_1)$ where $\Gamma_1=X_{p,q}/G_{p,q}$. We now discuss this graph of groups structure in details.
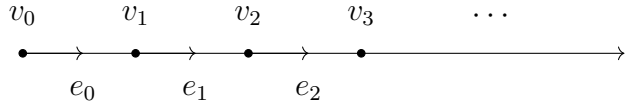
\begin{figure}[ht]
    \centering
\begin{tikzpicture}
      \node[label={below, yshift=-0.3cm:}] at (0,0.5) {$v_0$};
      \node[label={below, yshift=-0.3cm:}] at (1.5,0.5) {$v_1$};
      \node[label={below, yshift=-0.3cm:}] at (3,0.5) {$v_2$};
      \node[label={below, yshift=-0.3cm:}] at (4.5,0.5) {$v_3$};
      \node[label={below, yshift=-0.3cm:}] at (6.25,0.5) {$\cdots$};
      \node[label={below, yshift=-0.3cm:}] at (0.8,-0.5) {$e_0$};
      \node[label={below, yshift=-0.3cm:}] at (2.3,-0.5) {$e_1$};
      \node[label={below, yshift=-0.3cm:}] at (3.8,-0.5) {$e_2$};

      \tikzset{enclosed/.style={draw, circle, inner sep=0pt, minimum size=.1cm, fill=black}}
     
      \node[enclosed, label={right, yshift=.2cm:}] at (0,0) {};
      \node[enclosed, label={right, yshift=.2cm:}] at (1.5,0) {};
      \node[enclosed, label={right, yshift=.2cm:}] at (3,0) {};      \node[enclosed, label={right, yshift=.2cm:}] at (4.5,0) {};

      \draw[->] (0, 0) -- (8, 0);
      \draw[->] (0, 0) -- (0.8, 0);
      \draw[->] (1.5, 0) -- (2.3, 0);
      \draw[->] (3, 0) -- (3.8, 0);

\end{tikzpicture}
    \vspace*{-0.5cm}
    \caption{The graph of groups structure $(\Gamma_1,\mathcal{G}_1)$ of Out$(\mbox{BS}(p,q))$.}
    \label{fig: Out(BS) ray}
\end{figure}

Clay showed that the vertex group $G_{v_0}$ is isomorphic to the dihedral group $\mathbb{Z}_{p|n-1|} \rtimes \mathbb{Z}_2$, generated by the following automorphisms:
\begin{align*}
\psi: \hspace{0.5cm} & x \mapsto x \hspace{2cm} \iota: \hspace{0.5cm} x \mapsto x^{-1}\\
&t \mapsto xt \hspace{3cm}  t \mapsto t
\end{align*}
and for $k \geq 1$, the vertex group $G_{v_k}$ is isomorphic to the dihedral group $\mathbb{Z}_{|n^k(n-1)|} \rtimes \mathbb{Z}_2$ generated by the following automorphisms: 
\begin{align*}
\phi_{k}: \hspace{0.5cm} & x \mapsto x \hspace{2cm} \iota: \hspace{0.5cm} x \mapsto x^{-1}\\
&t \mapsto (t^{-k}x^pt^k)t \hspace{1.6cm}  t \mapsto t
\end{align*}
Notice that we have the following relations:
$\psi^{p(n-1)} = c_x^{-p}$ (where $c_x$ is the inner automorphism $g \mapsto xgx^{-1}$), $\iota \psi = \psi^{-1}\iota$, $\phi_{k+1}^n = \phi_k$ for $k \geq 1$, $\phi_1^n = \psi^p$ and $\iota\phi_k = \phi_k^{-1}\iota$. 

In summary, we have the vertex groups of $(\Gamma_1,\mathcal{G})$ are
$$G_{v_0}=\langle \psi, \iota\  | \ \psi^{p(n-1)}=\iota^2=1,\ \iota\psi=\psi^{-1}\iota \rangle$$
and 
$$G_{v_k}=\langle \phi_k, \iota\ |\ \iota^2=1,\ \phi_k^n=\phi_{k-1}, \ \iota\phi_k=\phi_k^{-1}\iota \rangle$$
The edge groups of $(\Gamma_1,\mathcal{G}_1)$ are 

$$\alpha_{e_0}(G_{e_0})=\langle\phi_1^n,\iota \rangle \leq G_{v_1}\ ,\ \alpha_{\overline{e}_0}(G_{e_0})=\langle\psi^p,\iota \rangle \leq G_{v_0}$$
and 
$$\alpha_{e_k}(G_{e_k})=\langle\phi_k, \iota\rangle\leq G_{v_{k+1}}\ ,\ \alpha_{\overline{e}_k}(G_{e_k})=G_{v_k}\ \ \mbox{for}\ k\geq 1$$

As graph of groups, the edge $e_0$ contributes to a free amalgamated product as follows.

$$G_{v_0}\ast_{G_{e_0}}G_{v_1}=(\mathbb{Z}_{|p(n-1)|} \rtimes \mathbb{Z}_2) \ast_{\mathbb{Z}_{|n-1|}\rtimes \mathbb{Z}_2} (\mathbb{Z}_{|n(n-1)|}\rtimes \mathbb{Z}_2)$$
and each $e_k$ for $k\geq 1$ contributes to a free amalgamated product as follows.
$$G_{v_k}\ast_{G_{e_k}}G_{v_{k+1}}=(\mathbb{Z}_{|n^k(n-1)|}\rtimes \mathbb{Z}_2)\ast_{\mathbb{Z}_{|n^k(n-1)|}\rtimes \mathbb{Z}_2} (\mathbb{Z}_{|n^{k+1}(n-1)|}\rtimes \mathbb{Z}_2)$$

We note that each $e_k$ for $k\geq 1$ is in fact a collapsible edge. So the infinite ray $\Gamma_1$ can collapse to a segment pictured as in Figure \ref{fig: Out(BS) segment}, where one vertex corresponding to $v_0$ and the other vertex corresponding to the end represented by $(v_1, v_2, \dotsc)$. 
\begin{figure}[ht]
    \centering
\begin{tikzpicture}
      \node[label={below, yshift=-0.3cm:}] at (0,0.5) {$v_0$};
      \node[label={below, yshift=-0.3cm:}] at (1.5,-0.5) {$e_0$};
      \node[label={below, yshift=-0.3cm:}] at (3.2,0.5) {$(v_1, v_2, \dotsc)$};

      \tikzset{enclosed/.style={draw, circle, inner sep=0pt, minimum size=.1cm, fill=black}}
     
      \node[enclosed, label={right, yshift=.2cm:}] at (0,0) {};
      \node[enclosed, label={right, yshift=.2cm:}] at (3,0) {};     

      \draw (0, 0) -- (3, 0);

\end{tikzpicture}
    \vspace*{-0.5cm}
    \caption{The graph of groups structure $(\Gamma_2,\mathcal{G}_2)$ of Out$(\mbox{BS}(p,q))$.}
    \label{fig: Out(BS) segment}
\end{figure}
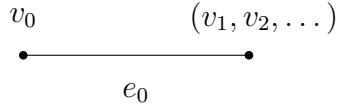

Since each $G_{v_k}$ is embedded in $G_{v_{k+1}}$ as a subgroup for $k\geq 1$, the stabilizer of this other end is the direct limit:
\[\lim_{\to} \mathbb{Z}_{n^k|n-1|} \rtimes \mathbb{Z}_2 = \mathbb{Z}[\frac{1}{|n|}]/|n(n-1)| \rtimes \mathbb{Z}_2\]
Thus the group Out$(G_{p,q})$ can be presented as a free amalgamated product as follows.
\begin{equation}\label{GrpPres}
\mbox{Out}(G_{p,q}) = (\mathbb{Z}_{|p(n-1)|}\rtimes \mathbb{Z}_2)\ast_{\mathbb{Z}_{|n-1|}\rtimes \mathbb{Z}_2}(\mathbb{Z}[\frac{1}{|n|}]/|n(n-1)\mathbb{Z}|\rtimes \mathbb{Z}_2)
\end{equation}

\subsection{Quotient action of \texorpdfstring{$\mbox{Out}(\mbox{BS}(p,q))$}{OutGpq}}\label{subsection: quotient action of OutBS} In this subsection, we apply Corollary \ref{coro: quotient of amalgamation} to Out$(G_{p,q})$. Denote the Bass-Serre tree associated to the graph of groups $(\Gamma_2,\mathcal{G}_2)$ by $T_{p,q}$. We begin by showing that the tree action $\mbox{Out}(G_{p,q})\curvearrowright T_{p,q}$ is non-elementary acylindrical. In fact, this follows from a more general result as follows.

\begin{proposition}\label{prop: amalgamationacyl}
Let $G=A\ast_D B$ be a free amalgamated product where $D$ is finite. If $[A:D]\geq 2$ and $[B:D]>2$, then the action of $G$ on its Bass-Serre tree $T$ is non-elementary acylindrical.
\end{proposition}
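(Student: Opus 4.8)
The plan is to establish the two conditions needed for a non-elementary acylindrical action on a tree: first, that $G \curvearrowright T$ is acylindrical (in fact $(k,C)$-acylindrical for explicit constants, using Theorem \ref{thm: acyl=kcacyl}), and second, that the action is non-elementary, i.e.\ $G$ contains two independent hyperbolic elements (by Theorem \ref{acylhypgrpclass} it suffices to rule out cases (1) and (2)). I would begin with acylindricity. The key structural fact about the Bass-Serre tree $T$ of an edge of groups $A \ast_D B$ is that every edge stabilizer is a conjugate of $D$, hence has order $|D|$, and more importantly that the pointwise stabilizer of a path of combinatorial length $2$ is contained in a conjugate of $D$: if a vertex $u$ lies between two edges $e, f$ of a length-$2$ path, then $\mathrm{PStab}(e) \cap \mathrm{PStab}(f)$ fixes $u$ and both neighbors, and one checks (using that the neighbors of $u$ correspond to cosets of $D$ in the vertex group $\mathrm{Stab}(u)$, which is a conjugate of $A$ or $B$) that such an element lies in the intersection of two distinct conjugates of $D$ inside $\mathrm{Stab}(u)$. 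In any case $\mathrm{PStab}$ of a length-$2$ path has size at most $|D|$. So the action is $(2, |D|)$-acylindrical, and by Theorem \ref{thm: acyl=kcacyl} it is acylindrical. (One should be slightly careful about edge inversions; since $o(e)\ne t(e)$ in an edge of groups, there are no inversions, so Bass-Serre theory applies cleanly.)

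Next I would produce two independent hyperbolic elements, which simultaneously shows the action is non-elementary and that $G$ is not virtually cyclic with a hyperbolic element. Fix the edge $e$ of the Bass-Serre tree with endpoints $u$ (stabilizer $A$) and $v$ (stabilizer $B$). Since $[A:D] \geq 2$, pick $a \in A \setminus D$; since $[B:D] > 2$, pick $b_1, b_2 \in B \setminus D$ lying in two distinct nontrivial cosets of $D$, so $b_1 b_2^{-1} \notin D$. The standard ping-pong / normal-form argument shows $g_1 = a b_1$ and $g_2 = a b_2$ are hyperbolic (their reduced syllable length is $2$, alternating between $A \setminus D$ and $B \setminus D$, so no power is elliptic), with axes passing through the edge $e$. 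Independence follows because the two axes diverge: near $v$ the axis of $g_i$ turns along the edge $b_i e$, and since $b_1 D \ne b_2 D$ the edges $b_1 e$ and $b_2 e$ are distinct, so $\mathrm{Axis}(g_1)$ and $\mathrm{Axis}(g_2)$ share only a bounded segment; hence $\{g_1^{\pm\infty}\}$ and $\{g_2^{\pm\infty}\}$ are disjoint. This is exactly where the hypothesis $[B:D] > 2$ (rather than merely $\geq 2$) is used — we need room for two distinct nontrivial cosets in $B$.

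Finally, combining the two parts: $G \curvearrowright T$ is acylindrical by the first step, and contains the independent hyperbolic pair $g_1, g_2$ by the second step, so by Theorem \ref{acylhypgrpclass} the action falls into case (3), i.e.\ it is non-elementary acylindrical. I expect the main obstacle to be the careful verification in the acylindricity step that the pointwise stabilizer of a length-$2$ path is genuinely bounded by $|D|$ — this requires tracking how neighbors of a vertex correspond to cosets of the (embedded) edge group and checking that fixing two distinct neighbors forces the element into a small intersection; the hyperbolicity and independence of $g_1, g_2$ via Bass-Serre normal forms is standard and should go through routinely.
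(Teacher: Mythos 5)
Your proposal is correct and follows essentially the same route as the paper: acylindricity via a $(k,|D|)$-acylindricity bound coming from finiteness of the edge stabilizers, and non-elementarity via the independent hyperbolic pair $ab_1,ab_2$ with $b_1D\neq b_2D$. The only differences are cosmetic — the paper uses $k=1$ (a single edge stabilizer is already a conjugate of $D$, so your length-$2$ analysis is unnecessary), and you are in fact more careful than the paper in insisting that $b_1,b_2$ lie in distinct cosets of $D$, which is the correct use of the hypothesis $[B:D]>2$.
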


\begin{proof}
It follows from the definition that the action $G\curvearrowright T$ is $(k,C)$-acylindrical for $k=1$ and $C=|D|$. Thus the action is acylindrical follows directly from Theorem \ref{thm: acyl=kcacyl}. It remains to show that the action is non-elementary. Since $[A:D]\geq 2$ and $[B:D]>2$, there exist a non-trivial element $a\in A\setminus D$ and two different non-trivial elements $b_1,b_2\in B\setminus D$ such that $ab_1$ and $ab_2$ contribute to two hyperbolic elements. Since $b_1\neq b_2$, we have $b_1D,b_2D$ are two different edges in $T$. This shows that $\mbox{Axis}(ab_1)$ and $\mbox{Axis}(ab_2)$ represent two different branches of $T$ respectively. Thus $ab_1$ and $ab_2$ are two independent hyperbolic elements. Therefore the action $G\curvearrowright T$ is non-elementary.
\end{proof}

\begin{corollary}\label{acylActionOnTpq}
$\mbox{Out}(G_{p,q}) \curvearrowright T_{p,q}$ is a non-elementary acylindrical action. 
\end{corollary}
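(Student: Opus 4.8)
The plan is to realize $\mbox{Out}(G_{p,q})$ as a free amalgamated product with finite edge group and then invoke Proposition \ref{prop: amalgamationacyl}. By the presentation \eqref{GrpPres} established in Clay's work, we have
\[
\mbox{Out}(G_{p,q}) = (\mathbb{Z}_{|p(n-1)|}\rtimes \mathbb{Z}_2)\ast_{\mathbb{Z}_{|n-1|}\rtimes \mathbb{Z}_2}(\mathbb{Z}[\tfrac{1}{|n|}]/|n(n-1)|\mathbb{Z}\rtimes \mathbb{Z}_2),
\]
so set $A=\mathbb{Z}_{|p(n-1)|}\rtimes \mathbb{Z}_2$, $B=\mathbb{Z}[\tfrac{1}{|n|}]/|n(n-1)|\mathbb{Z}\rtimes \mathbb{Z}_2$, and $D=\mathbb{Z}_{|n-1|}\rtimes \mathbb{Z}_2$. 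The Bass-Serre tree of this edge of groups is exactly $T_{p,q}$. First I would record that $D$ is finite, which is immediate since $|n-1|$ is a finite positive integer (recall $|n|>1$).

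Next I would verify the index hypotheses $[A:D]\geq 2$ and $[B:D]>2$ of Proposition \ref{prop: amalgamationacyl}. For $[A:D]$: the cyclic parts give $[\mathbb{Z}_{|p(n-1)|}:\mathbb{Z}_{|n-1|}] = |p|\geq 2$ since $p>1$, and the $\mathbb{Z}_2$ factors match, so $[A:D]=|p|\geq 2$. For $[B:D]$: the cyclic part of $B$ is $\mathbb{Z}[\tfrac{1}{|n|}]/|n(n-1)|\mathbb{Z}$, which contains $\mathbb{Z}_{|n-1|}$ with infinite index (the subgroup $\mathbb{Z}_{|n-1|}$ sits inside the strictly increasing union $\bigcup_k \mathbb{Z}_{n^k|n-1|}$, which is infinite because $|n|>1$), hence $[B:D]=\infty>2$. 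Both hypotheses of Proposition \ref{prop: amalgamationacyl} are therefore satisfied.

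Applying Proposition \ref{prop: amalgamationacyl} with $G=\mbox{Out}(G_{p,q})$, $A$, $B$, $D$ as above, we conclude that the action of $\mbox{Out}(G_{p,q})$ on its Bass-Serre tree $T_{p,q}$ is non-elementary acylindrical, which is the claim. I expect the only mildly delicate point to be confirming that $D=\mathbb{Z}_{|n-1|}\rtimes\mathbb{Z}_2$ genuinely embeds in both $A$ and $B$ as described and that the index computations are carried out on the correct subgroups (i.e.\ that the amalgamation is along the stated copy of $D$ in each factor, as dictated by the edge-group inclusions $\alpha_{e_0}, \alpha_{\overline{e}_0}$ recorded in subsection \ref{subsection: two graphs of Out(BS)}); everything else is a direct quotation of Proposition \ref{prop: amalgamationacyl}. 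As an immediate consequence, $\mbox{Out}(G_{p,q})$ is acylindrically hyperbolic.
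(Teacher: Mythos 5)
Your proposal is correct and follows exactly the paper's route: the paper also derives the corollary by applying Proposition \ref{prop: amalgamationacyl} to the amalgamated product presentation \eqref{GrpPres}, merely without writing out the index verifications $[A:D]=|p|\geq 2$ and $[B:D]=\infty$ that you supply. Your explicit checks are accurate and simply make the one-line deduction in the paper fully transparent.
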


\begin{proof}
The proof follows directly from the presentation \ref{GrpPres} and Proposition \ref{prop: amalgamationacyl}.

\end{proof}

\begin{corollary}\label{Out(BS(p,q))acyl}
$\mbox{Out(BS}(p,q))$ is an acylindrically hyperbolic group if and only if $p$ divides $q$ properly.
\end{corollary}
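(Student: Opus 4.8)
The plan is to prove the two implications separately. For the ``if'' direction, assume $p$ properly divides $q$, so we are in the standing situation $q = pn$ with $p,|n| > 1$. I would simply invoke Corollary~\ref{acylActionOnTpq}: the presentation~\eqref{GrpPres} exhibits $\mbox{Out}(G_{p,q})$ as a free amalgamated product $(\mathbb{Z}_{|p(n-1)|}\rtimes\mathbb{Z}_2)\ast_{\mathbb{Z}_{|n-1|}\rtimes\mathbb{Z}_2}(\mathbb{Z}[\tfrac{1}{|n|}]/|n(n-1)\mathbb{Z}|\rtimes\mathbb{Z}_2)$ over a finite edge group, and one checks the index hypotheses $[A:D]\geq 2$, $[B:D] > 2$ of Proposition~\ref{prop: amalgamationacyl} hold (since $|p(n-1)| > |n-1|$ as $p > 1$, and the second factor is infinite, so both indices are actually infinite). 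This gives a non-elementary acylindrical action of $\mbox{Out}(G_{p,q})$ on the Bass--Serre tree $T_{p,q}$, hence by the definition of acylindrical hyperbolicity the group is acylindrically hyperbolic.

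For the ``only if'' direction, I would argue the contrapositive: if $p$ does not properly divide $q$, then $\mbox{Out}(\mbox{BS}(p,q))$ is \emph{not} acylindrically hyperbolic. After the normalization $1 \leq p \leq |q|$, the case where $p$ does not properly divide $q$ breaks into: $p$ does not divide $q$; $p = q$; and $p = -q$. By Remark~\ref{remark: presentation of OutBS}, in these cases $\mbox{Out}(\mbox{BS}(p,q))$ is respectively $\mathbb{Z}_{2|p-q|}\rtimes\mathbb{Z}_2$ (finite), $\mathbb{Z}\rtimes(\mathbb{Z}_2\times\mathbb{Z}_2)$ (virtually $\mathbb{Z}$, i.e.\ virtually cyclic), and $\mathbb{Z}_{2p}\rtimes\mathbb{Z}_2$ (finite). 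A finite group or a virtually cyclic group cannot be acylindrically hyperbolic: an acylindrically hyperbolic group is by definition non-elementary, and Theorem~\ref{acylhypgrpclass} forces any acylindrical action of a finite or virtually cyclic group to fall into case (1) or (2), which are the elementary cases; equivalently, acylindrically hyperbolic groups must contain infinitely many independent hyperbolic elements and in particular must be infinite and not virtually cyclic. Hence none of these three groups is acylindrically hyperbolic, completing the contrapositive.

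The argument is essentially a bookkeeping of the known presentations together with the already-established Corollary~\ref{acylActionOnTpq}, so there is no serious obstacle; the only point requiring a little care is making sure the index inequalities in Proposition~\ref{prop: amalgamationacyl} are correctly matched to the two factors of~\eqref{GrpPres} (one must take $A$ to be the finite dihedral-type vertex group and $B$ the infinite direct-limit vertex group, and note $D = \mathbb{Z}_{|n-1|}\rtimes\mathbb{Z}_2$ has index at least $p \geq 2$ in $A$ and infinite index in $B$). One should also remark that the reduction $1 \leq p \leq |q|$ is harmless since $\mbox{BS}(p,q)\cong\mbox{BS}(q,p)$ and $\mbox{BS}(p,q)\cong\mbox{BS}(-p,-q)$, so ``$p$ properly divides $q$'' is the symmetric statement ``$\min(|p|,|q|)$ properly divides $\max(|p|,|q|)$'' and the classification above is exhaustive.
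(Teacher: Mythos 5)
Your proof is correct and follows exactly the paper's route: the ``if'' direction is Corollary~\ref{acylActionOnTpq} applied to the amalgamated-product presentation~\eqref{GrpPres}, and the ``only if'' direction reads off from Remark~\ref{remark: presentation of OutBS} that in all remaining cases the group is finite or virtually cyclic and hence not acylindrically hyperbolic. The paper states this as an immediate consequence of those two results; your write-up just makes the bookkeeping explicit.
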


\begin{proof}
The proof follows directly from Remark \ref{remark: presentation of OutBS} and Corollary \ref{acylActionOnTpq}.
\end{proof}

\begin{remark}
    Note that when $p$ divides $q$ properly, it follows from the group presentation of Out(BS($p,q$)) \ref{GrpPres} that the group is hyperbolic relative to the infinitely generated vertex group in the amalgamated product as the edge group is finite. Thus, by Remark \ref{remark: presentation of OutBS}, we have Out(BS($p,q$)) is relatively hyperbolic if BS($p,q$) is non-solvable.
\end{remark}

We now are ready to apply Corollary \ref{coro: quotient of amalgamation} to the action Out$(G_{p,q})\curvearrowright T_{p,q}$. Recall that the presentation \ref{GrpPres} of Out$(G_{p,q})$ is a free amalgamated product, we then denote $\mbox{Out}(G_{p,q})=A\ast_C B$ where $A,B$ and $C$ are subgroups in presentation \ref{GrpPres} accordingly. To have an equivariant family of subgroups, by Remark \ref{rmk: orbitrep of equi-family}, it suffices to choose proper normal subgroups $R_A$ in $A$ and $R_B$ in $B$. We provide two examples as follows. Consider the action $\mbox{Out}(G_{4,12})\curvearrowright T_{4,12}$. In Example \ref{example: infinite quotient}, we provide an equivariant family of subgroups $\{R_v\}$ of Out$(G_{4,12})$ that satisfies conditions of Corollary \ref{coro: quotient of amalgamation}, and thus shows that the quotient action Out$(G_{4,12})/\langle R_v\rangle\curvearrowright T_{2,4}/\langle R_v\rangle$ is non-elementary acylindrical and the quotient group is $\mbox{Out}(G_{4,12})/\langle R_v\rangle$ is acylindrically hyperbolic. In contrast, we provide an equivariant family of subgroups of Out$(G_{4,12})$ in Example \ref{example: finite quotient} that violates conditions of Corollary \ref{coro: quotient of amalgamation}. The quotient action is still acylindrical, however, it is an elementary acylindrical action.

\begin{figure}[ht]
    \centering
\begin{tikzpicture}
      \node[label={above, yshift=0cm:}] at (0,3.3) {$\cdot$};
      \node[label={above, yshift=0cm:}] at (-0.2,3.35) {$\cdot$};
      \node[label={above, yshift=0cm:}] at (-0.3,3.5) {$\cdot$};

      \node[label={above, yshift=0cm:}] at (-0.7,1.8) {$\cdot$};
      \node[label={above, yshift=0cm:}] at (-0.8,1.65) {$\cdot$};
      \node[label={above, yshift=0cm:}] at (-1,1.6) {$\cdot$};

      \node[label={above, yshift=0cm:}] at (0.3,0.5) {$\cdot$};
      \node[label={above, yshift=0cm:}] at (0.4,0.3) {$\cdot$};
      \node[label={above, yshift=0cm:}] at (0.35,0.1) {$\cdot$};
      
      \node[label={above, yshift=0cm:}] at (4.1,1.6) {$\cdot$};
      \node[label={above, yshift=0cm:}] at (3.9,1.65) {$\cdot$};
      \node[label={above, yshift=0cm:}] at (3.8,1.8) {$\cdot$};

      \tikzset{enclosed/.style={draw, circle, inner sep=0pt, minimum size=.1cm, fill=black}}
      
      \node[enclosed,black, label={right, yshift=.2cm:}] at (1,2) {};

      \node[enclosed,black, label={right, yshift=.2cm:}] at (0,3.7) {};
      \node[enclosed,black, label={right, yshift=.2cm:}] at (-1,2) {};
      \node[enclosed,black, label={right, yshift=.2cm:}] at (0,0.3) {};

      \node[enclosed,black, label={right, yshift=.2cm:}] at (0.1,4.5) {};
      \node[enclosed,black, label={right, yshift=.2cm:}] at (-0.3,4.5) {};
      \node[enclosed,black, label={right, yshift=.2cm:}] at (-0.6,4.35) {};     \node[enclosed,black, label={right, yshift=.2cm:}] at (-0.8,4) {};
      \node[enclosed,black, label={right, yshift=.2cm:}] at (-1.7,2.6) {};
      \node[enclosed,black, label={right, yshift=.2cm:}] at (-1.7,1.4) {};
      \node[enclosed,black, label={right, yshift=.2cm:}] at (0.1,-0.55) {};
      \node[enclosed,black, label={right, yshift=.2cm:}] at (-0.3,-0.55) {};
      \node[enclosed,black, label={right, yshift=.2cm:}] at (-0.6,-0.4) {};
      \node[enclosed,black, label={right, yshift=.2cm:}] at (-0.8,-0.1) {};
      \node[enclosed,black, label={right, yshift=.2cm:}] at (-2,2.25) {};
      \node[enclosed,black, label={right, yshift=.2cm:}] at (-2,1.75) {};

      \draw[very thick](-1,2) -- (-2,2.25) node[midway, right] {};
      \draw[very thick](-1,2) -- (-2,1.75) node[midway, right] {};
    
      \draw[very thick,red](1,2) -- (4,2) node[midway, right] {};
      \draw[very thick,red](1,2) -- (0,3.7) node[midway, right] {};
      \draw[very thick](1,2) -- (-1,2) node[midway, right] {};
      \draw[very thick](1,2) -- (0,0.3) node[midway, right] {};

      \draw[very thick](0,3.7) -- (-0.3,4.5) node[midway, right] {};
      \draw[very thick,red](0,3.7) -- (-0.6,4.35) node[midway, right] {};

      \draw[very thick,red](0,3.7) -- (0.1,4.5) node[midway, right] {};
      \draw[very thick](0,3.7) -- (-0.8,4) node[midway, right] {};
      \draw[very thick](-1,2) -- (-1.7,1.4) node[midway, right] {};
      \draw[very thick](-1,2) -- (-1.7,2.6) node[midway, right] {};
      \draw[very thick](0,0.3) -- (0.1,-0.55) node[midway, right] {};
      \draw[very thick](0,0.3) -- (-0.8,-0.1) node[midway, right] {};
      \draw[very thick](0,0.3) -- (-0.6,-0.4) node[midway, right] {};
      \draw[very thick](0,0.3) -- (-0.3,-0.55) node[midway, right] {};

      \node[enclosed,black, label={right, yshift=.2cm:}] at (4,2) {};
      \node[enclosed,black, label={right, yshift=.2cm:}] at (5,3) {};      \node[enclosed,black, label={right, yshift=.2cm:}] at (5.3,2.4) {};      \node[enclosed,black, label={right, yshift=.2cm:}] at (5.3,1.6) {};

      \node[enclosed,black, label={right, yshift=.2cm:}] at (5,1) {};

      \draw[very thick](4,2) -- (5,3) node[midway, right] {};
      \draw[very thick, red](4,2) -- (5,1) node[midway, right] {};

      \draw[very thick, red](4,2) -- (5.3,2.4) node[midway, right] {};
      \draw[very thick](4,2) -- (5.3,1.6) node[midway, right] {};

      \node[label={above, yshift=0cm:}] at (1.2,2.3) {\small$A$};
      \node[label={above, yshift=0cm:}] at (3.8,2.3) {\small$B$};
      \node[label={above, yshift=0cm:}] at (2.5,1.7) {\small$C$};
      \node[label={above, yshift=0cm:}] at (0.5,3.6) {\small$\psi B$};
      \node[label={above, yshift=0cm:}] at (-0.7,2.3) {\small$\psi^2 B$};
      \node[label={above, yshift=0cm:}] at (-0.2,0.65) {\small$\psi^3 B$};
      \node[label={above, yshift=0cm:}] at (5.5,3) {\small$\phi_i A$};
      \node[label={above, yshift=0cm:}] at (6.5,2.5) {\small$\phi_i^{|n^i(n-1)|-1} A$};
      \node[label={above, yshift=0cm:}] at (5.8,1.5) {\small$\phi_j A$};
      \node[label={above, yshift=0cm:}] at (6.2,0.9) {\small$\phi_j^{|n^j(n-1)|-1} A$};

      \node[label={above, yshift=0cm:}] at (0.3,4.8) {\small$\psi\phi_i A$};     \node[label={above, yshift=0cm:}] at (-1.1,4.6) {\small$\psi\phi_j A$};

      \node[label={above, yshift=0cm:}] at (-2.35,2.75) {\small$\psi^2\phi_i A$};  
      \node[label={above, yshift=0cm:}] at (-2.7,1.8) {\small$\psi^2\phi_j A$};

      \node[label={above, yshift=0cm:}] at (-1.4,-0.05) {\small$\psi^3\phi_i A$}; \node[label={above, yshift=0cm:}] at (-0.2,-0.85) {\small$\psi^3\phi_j A$};

      \node[label={above, yshift=0cm:}] at (-4.5,2) {$\cdots$};
      \node[label={above, yshift=0cm:}] at (8.5,2) {$\cdots$};

\end{tikzpicture}
    \caption{A local picture of the tree $T_{4,12}$. Every translate of $A$ has valence 4 and every translate of $B$ has infinite valence. The two red lines represent Axis($\psi\phi_i$) and Axis($\psi\phi_j$) respectively.}
    \label{fig: T24}
\end{figure}
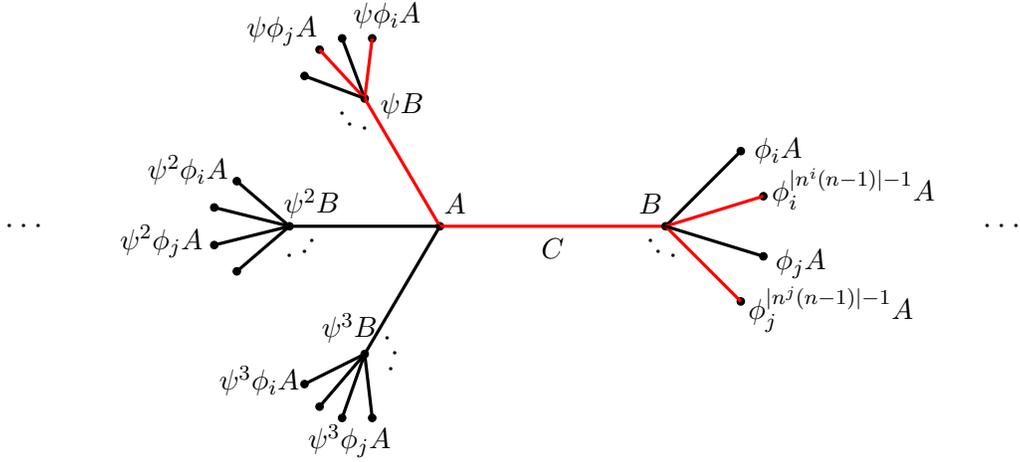

\begin{example}[Infinite quotient]\label{example: infinite quotient}
First, we have the following presentation of Out$(G_{4,12})$.
$$\mbox{Out}(G_{4,12})=A\ast_C B=(\mathbb{Z}_{8}\rtimes \mathbb{Z}_2)\ast_{\mathbb{Z}_{2}\rtimes \mathbb{Z}_2} (\mathbb{Z}[\frac{1}{3}]/6\mathbb{Z}\rtimes \mathbb{Z}_2)$$
where $A$ is generated by $\{\psi,\iota\}$, $B$ is generated by $\{\iota, \phi_1,...,\phi_k,...\}$. We also have the following relation $\psi^4=\phi_1^3$. It follows that $C$ is generated by $\{\psi^4,\iota\}$ (resp. $\{\phi_1^3,\iota\}$) as subgroup of $A$ (resp. $B$). Further, we have $\psi^8=\iota^2=1$ and $\phi_k^{3^k}=\phi_1$ for $k\geq 1$. This shows that all translates of $A$ in $T_{4,12}$ has valence $4$ whereas all translates of $B$ in $T_{4,12}$ has infinite valence. See Figure \ref{fig: T24} for the picture of $T_{4,12}$. Let us now consider $R_A$ is the subgroup of $A$ generated by $\{\psi^2,\iota\}$ and $R_B$ is the subgroup of $B$ generated by $\{\phi_1,\iota\}$. Since both $A$ and $B$ are abelian groups, we have $R_A$ and $R_B$ are both normal subgroups. One can verify that, under these choices, the equivariant family of subgroups $\{R_v\}$ satisfies all conditions of Corollary \ref{coro: quotient of amalgamation}. In fact, the quotient tree $T_{4,12}/\langle R_v \rangle$ is pictured as in Figure \ref{fig: infinite quotient}. In addition, each pair of elements $\psi\phi_i, \psi\phi_j$ for $i,j\geq 2$ and $i\neq j$ contribute to a pair of independent hyperbolic elements for the quotient action. Thus we have $\mbox{Out}(G_{4,12})/\langle R_v \rangle\curvearrowright T_{4,12}/\langle R_v \rangle$ is non-elementary acylindrical, and thus the quotient group $\mbox{Out}(G_{4,12})/\langle R_v \rangle$ is acylindrically hyperbolic.

\end{example}

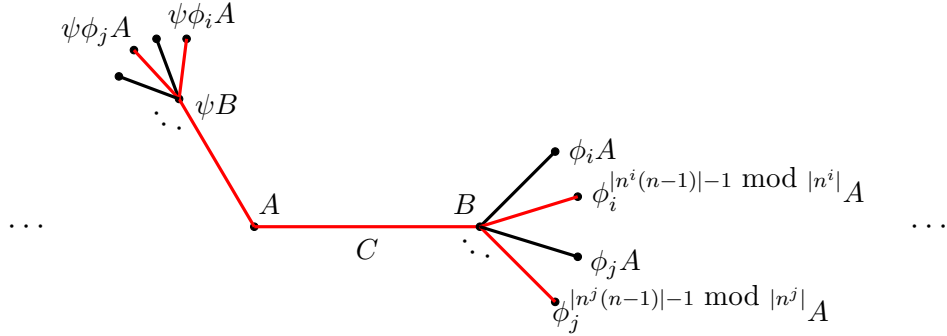
\begin{figure}[ht]
    \centering
\begin{tikzpicture}
          \node[label={above, yshift=0cm:}] at (0,3.3) {$\cdot$};
      \node[label={above, yshift=0cm:}] at (-0.2,3.35) {$\cdot$};
      \node[label={above, yshift=0cm:}] at (-0.3,3.5) {$\cdot$};

      \node[label={above, yshift=0cm:}] at (4.1,1.6) {$\cdot$};
      \node[label={above, yshift=0cm:}] at (3.9,1.65) {$\cdot$};
      \node[label={above, yshift=0cm:}] at (3.8,1.8) {$\cdot$};

      \tikzset{enclosed/.style={draw, circle, inner sep=0pt, minimum size=.1cm, fill=black}}
      
      \node[enclosed,black, label={right, yshift=.2cm:}] at (1,2) {};

      \node[enclosed,black, label={right, yshift=.2cm:}] at (0,3.7) {};
      \node[enclosed,black, label={right, yshift=.2cm:}] at (0.1,4.5) {};
      \node[enclosed,black, label={right, yshift=.2cm:}] at (-0.3,4.5) {};
      \node[enclosed,black, label={right, yshift=.2cm:}] at (-0.6,4.35) {};     \node[enclosed,black, label={right, yshift=.2cm:}] at (-0.8,4) {};

      \draw[very thick,red](1,2) -- (4,2) node[midway, right] {};
      \draw[very thick,red](1,2) -- (0,3.7) node[midway, right] {};

      \draw[very thick](0,3.7) -- (-0.3,4.5) node[midway, right] {};
      \draw[very thick,red](0,3.7) -- (-0.6,4.35) node[midway, right] {};

      \draw[very thick,red](0,3.7) -- (0.1,4.5) node[midway, right] {};
      \draw[very thick](0,3.7) -- (-0.8,4) node[midway, right] {};
  
      \node[enclosed,black, label={right, yshift=.2cm:}] at (4,2) {};
      \node[enclosed,black, label={right, yshift=.2cm:}] at (5,3) {};      \node[enclosed,black, label={right, yshift=.2cm:}] at (5.3,2.4) {};      \node[enclosed,black, label={right, yshift=.2cm:}] at (5.3,1.6) {};

      \node[enclosed,black, label={right, yshift=.2cm:}] at (5,1) {};

      \draw[very thick](4,2) -- (5,3) node[midway, right] {};
      \draw[very thick, red](4,2) -- (5,1) node[midway, right] {};

      \draw[very thick, red](4,2) -- (5.3,2.4) node[midway, right] {};
      \draw[very thick](4,2) -- (5.3,1.6) node[midway, right] {};

      \node[label={above, yshift=0cm:}] at (1.2,2.3) {\small$A$};
      \node[label={above, yshift=0cm:}] at (3.8,2.3) {\small$B$};
      \node[label={above, yshift=0cm:}] at (2.5,1.7) {\small$C$};
      \node[label={above, yshift=0cm:}] at (0.5,3.6) {\small$\psi B$};

      \node[label={above, yshift=0cm:}] at (5.5,3) {\small$\phi_i A$};
      \node[label={above, yshift=0cm:}] at (7.3,2.5) {\small$\phi_i^{|n^i(n-1)|-1\ \mbox{mod}\ |n^i|} A$};
      \node[label={above, yshift=0cm:}] at (5.8,1.5) {\small$\phi_j A$};
      \node[label={above, yshift=0cm:}] at (6.8,0.9) {\small$\phi_j^{|n^j(n-1)|-1\  \mbox{mod}\ |n^j|} A$};

      \node[label={above, yshift=0cm:}] at (0.3,4.8) {\small$\psi\phi_i A$};     \node[label={above, yshift=0cm:}] at (-1.1,4.6) {\small$\psi\phi_j A$};

      \node[label={above, yshift=0cm:}] at (-2,2) {$\cdots$};
      \node[label={above, yshift=0cm:}] at (10,2) {$\cdots$};

\end{tikzpicture}
    \caption{The infinite quotient tree $T_{4,12}/\langle R_v \rangle$ in Example \ref{example: infinite quotient}. The two red lines represent Axis($\psi\phi_i$) and Axis($\psi\phi_j$) respectively.}
    \label{fig: infinite quotient}
\end{figure}

\begin{example}[Finite quotient]\label{example: finite quotient}
Let us now consider $R_A$ is the subgroup of $A$ generated by $\{\psi^3,\iota\}$ and $R_B$ is the subgroup of $B$ still generated by $\{\phi_1,\iota\}$. Then we have $\langle C, R_A\rangle=A$ violates condition (1) of Corollary \ref{coro: quotient of amalgamation}. In this case, $T_{4,12}/\langle R_v \rangle$ is a finite diameter tree pictured as in Figure \ref{fig: infinite quotient}. The quotient action $\mbox{Out}(G_{4,12})/\langle R_v \rangle \curvearrowright T_{4,12}/\langle R_v \rangle$ is still acylindrical, however, there are no hyperbolic elements. Thus the quotient action is an elementary action.

\end{example}

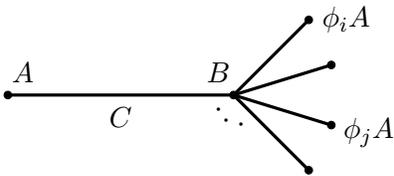
\begin{figure}[ht]
    \centering
\begin{tikzpicture}
      
      \node[label={above, yshift=0cm:}] at (4.1,1.6) {$\cdot$};
      \node[label={above, yshift=0cm:}] at (3.9,1.65) {$\cdot$};
      \node[label={above, yshift=0cm:}] at (3.8,1.8) {$\cdot$};

      \tikzset{enclosed/.style={draw, circle, inner sep=0pt, minimum size=.1cm, fill=black}}
      
      \node[enclosed,black, label={right, yshift=.2cm:}] at (1,2) {};

      \draw[very thick](1,2) -- (4,2) node[midway, right] {};
   
      \node[enclosed,black, label={right, yshift=.2cm:}] at (4,2) {};
      \node[enclosed,black, label={right, yshift=.2cm:}] at (5,3) {};      \node[enclosed,black, label={right, yshift=.2cm:}] at (5.3,2.4) {};      \node[enclosed,black, label={right, yshift=.2cm:}] at (5.3,1.6) {};

      \node[enclosed,black, label={right, yshift=.2cm:}] at (5,1) {};

      \draw[very thick](4,2) -- (5,3) node[midway, right] {};
      \draw[very thick](4,2) -- (5,1) node[midway, right] {};

      \draw[very thick](4,2) -- (5.3,2.4) node[midway, right] {};
      \draw[very thick](4,2) -- (5.3,1.6) node[midway, right] {};

      \node[label={above, yshift=0cm:}] at (1.2,2.3) {\small$A$};
      \node[label={above, yshift=0cm:}] at (3.8,2.3) {\small$B$};
      \node[label={above, yshift=0cm:}] at (2.5,1.7) {\small$C$};

      \node[label={above, yshift=0cm:}] at (5.5,3) {\small$\phi_i A$};
      \node[label={above, yshift=0cm:}] at (5.8,1.5) {\small$\phi_j A$};

\end{tikzpicture}
    \caption{The finite quotient tree $T_{4,12}/\langle R_v \rangle$ in Example \ref{example: finite quotient}.}
    \label{fig: finite quotient}
\end{figure}

\subsection{Comparing tree actions of \texorpdfstring{$\mbox{Out}(\mbox{BS}(p,q))$}{OutGpq}}\label{subsection: compare actions of OutBS}

In this subsection, we compare the two tree actions $\mbox{Out}(G_{p,q})\curvearrowright T_{p,q}$ and $\mbox{Out}(G_{p,q})\curvearrowright X_{p,q}$ in the sense of Section \ref{section: largest section}, and we provide a proof of our last main result, Theorem \ref{thmE}.

We begin by applying Theorem \ref{thm: largestaction} to the action $\mbox{Out}(G_{p,q})\curvearrowright T_{p,q}$ and thus showing that this action is the largest acylindrical action.

\begin{proposition}\label{prop: Tpq largest}
Out$(G_{p,q})$ admits the largest acylindrical action on the Bass-Serre tree $T_{p,q}$.
\end{proposition}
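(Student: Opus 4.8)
The plan is to invoke Corollary \ref{coro: largestaction}, whose hypotheses are tailor-made for this situation. The graph of groups $(\Gamma_2,\mathcal{G}_2)$ of Figure \ref{fig: Out(BS) segment} is a single edge of groups, hence a finite graph of groups, and its Bass-Serre tree is $T_{p,q}$ with $\pi_1(\Gamma_2,\mathcal{G}_2)=\mathrm{Out}(G_{p,q})$. By Corollary \ref{acylActionOnTpq}, the action $\mathrm{Out}(G_{p,q})\curvearrowright T_{p,q}$ is acylindrical. Thus the only remaining point to check is that each of the two vertex groups of $(\Gamma_2,\mathcal{G}_2)$ consists entirely of finite-order elements.

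First I would treat the vertex group corresponding to $v_0$, namely $A\cong\mathbb{Z}_{|p(n-1)|}\rtimes\mathbb{Z}_2$, which is a finite group and so trivially torsion. For the other vertex group $B\cong\mathbb{Z}[\tfrac1{|n|}]/|n(n-1)|\mathbb{Z}\rtimes\mathbb{Z}_2$, I would use its description as the direct limit $\varinjlim\,(\mathbb{Z}_{n^k|n-1|}\rtimes\mathbb{Z}_2)$ of the vertex groups $G_{v_k}$: every element of $B$ already lies in some finite stage $\mathbb{Z}_{n^k|n-1|}\rtimes\mathbb{Z}_2$, hence has finite order. Equivalently, for $a/n^k\in\mathbb{Z}[\tfrac1{|n|}]$ one has $n^k|n(n-1)|\cdot(a/n^k)\in|n(n-1)|\mathbb{Z}$, so the abelian part is torsion, and adjoining the order-two element $\iota$ keeps the whole group torsion.

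With both vertex groups verified to be torsion, Corollary \ref{coro: largestaction} applies and yields that $\mathrm{Out}(G_{p,q})$ admits the largest acylindrical action on $T_{p,q}$. I do not anticipate a genuine obstacle here: the substantive content has already been isolated in Corollary \ref{coro: largestaction}, which rests on Theorem \ref{thm: largestaction} together with the trichotomy of Theorem \ref{acylhypgrpclass} (forcing torsion subgroups to act elliptically in any acylindrical action on a hyperbolic space). The only place requiring a little care is the verification that the infinitely generated vertex group $B$ is torsion, which is immediate from its realization as an increasing union of finite groups.
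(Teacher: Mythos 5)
Your proposal is correct and follows essentially the same route as the paper: cite Corollary \ref{acylActionOnTpq} for acylindricity of $\mathrm{Out}(G_{p,q})\curvearrowright T_{p,q}$, observe that both vertex groups of $(\Gamma_2,\mathcal{G}_2)$ are torsion, and apply Corollary \ref{coro: largestaction}. The only difference is that you spell out the (easy) verification that the infinitely generated vertex group $\mathbb{Z}[\tfrac1{|n|}]/|n(n-1)|\mathbb{Z}\rtimes\mathbb{Z}_2$ is torsion, which the paper states without proof.
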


\begin{proof}
Note that Proposition \ref{acylActionOnTpq} shows that the action of Out$(G_{p,q})$ on $T_{p,q}$ is non-elementary acylindrical. Since the vertex groups contain finite order elements only, then the hypotheses of Corollary \ref{coro: largestaction} are satisfied and hence the proof follows.
\end{proof}

Now we turn our attention to the other tree action $\mbox{Out}(G_{p,q})\curvearrowright X_{p,q}$. We first give a short description of the tree $X_{p,q}$ (full details can be found in Section 3.2 of \cite{AutomorphismBSgroups}). By Bass-Serre theory, we can lift $\Gamma_1$ to an infinite ray $\tilde{\Gamma}_1$ in $X_{p,q}$ such that $\mbox{Out}(G_{p,q})\cdot \tilde{\Gamma}_1=X_{p,q}$. We denote the lift of $v_k$ in $\tilde{\Gamma}_1$ by $\tilde{v}_k$. Thus each vertex in $X_{p,q}$ is actually a translation of some $\tilde{v}_k$. We say a vertex has $\textit{level k}$ if it is a translation of $\tilde{v}_k$. By definition, the action $\mbox{Out}(G_{p,q})\curvearrowright X_{p,q}$ preserves the level of each vertex. Recall that the edge group $G_{e_k}$ has index $|n|$ in $G_{v_{k+1}}$ and index $1$ in $G_{v_{k}}$, so each level $k$ vertex in $X_{p,q}$ has valence $|n|+1$. Since $G_{e_0}$ has index $p$ in $G_{v_0}$, then each level $0$ vertex in $X_{p,q}$ has valence $p$. For example, to highlight the level of each vertex, we have a piece of the tree $X_{2,4}$ pictured as in Figure \ref{fig: X24}. The entire tree $X_{2,4}$ can be obtained by gluing 2 pieces along a level $0$ vertex, and passing to the new tree constructed by the gluing process.

\begin{figure}[ht]
    \centering
\begin{tikzpicture}
      \node[label={right, yshift=0cm:}] at (0,-0.4) {$\Tilde{v}_0$};
      \node[label={right, yshift=0cm:}] at (1.5,-0.4) {$\phi_1\Tilde{v}_0$};
      \node[label={right, yshift=0cm:}] at (3,-0.4) {$\phi_2\Tilde{v}_0$};
      \node[label={right, yshift=0cm:}] at (4.5,-0.4) {$\phi_2\phi_1\Tilde{v}_0$};
      \node[label={right, yshift=0cm:}] at (1.7,2) {$\Tilde{v}_1$};
      \node[label={right, yshift=0cm:}] at (4.2,2) {$\phi_2\Tilde{v}_1$};
      \node[label={above, yshift=0cm:}] at (3,4) {$\Tilde{v}_2$};
      \node[label={above, yshift=0cm:}] at (2.8,4.3) {$\cdot$};
      \node[label={above, yshift=0cm:}] at (2.9,4.45) {$\cdot$};
      \node[label={above, yshift=0cm:}] at (3,4.6) {$\cdot$};
      \node[label={below, yshift=0cm:}] at (-1.5,0) {Level 0};
      \node[label={below, yshift=0cm:}] at (-1.5,2) {Level 1};
      \node[label={below, yshift=0cm:}] at (-1.5,4) {Level 2};

      \tikzset{enclosed/.style={draw, circle, inner sep=0pt, minimum size=.1cm, fill=black}}
      
      \node[enclosed, label={right, yshift=.2cm:}] at (0,0) {};
      \node[enclosed, label={right, yshift=.2cm:}] at (1.5,0) {};
      \node[enclosed, label={right, yshift=.2cm:}] at (3,0) {};
      \node[enclosed, label={right, yshift=.2cm:}] at (4.5,0) {};

      \draw(0,0) -- (2.62, 4) node[midway, right] {};

      \draw(1.5,0) -- (1.3,2) node[midway, right] {};
      \draw(3,0) -- (3.55,2) node[midway, right] {};
      \draw(4.5,0) -- (2.62,4) node[midway, right] {};

\end{tikzpicture}
    \caption{A piece of the Bass-Serre tree $X_{2,4}$.}
    \label{fig: X24}
\end{figure}
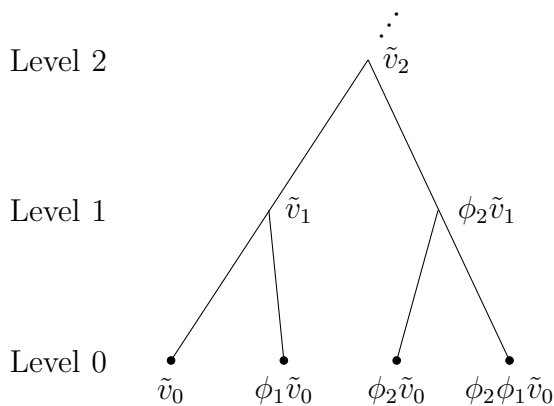

We are ready now to show that the action  $\mbox{Out}(G_{p,q})\curvearrowright X_{p,q}$ is not an acylindrical action.

\begin{proposition}\label{NotAcyl}
The action $\mbox{Out}(G_{p,q})\curvearrowright X_{p,q}$ is not acylindrical.
\end{proposition}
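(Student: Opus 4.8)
The plan is to violate the $(k,C)$-acylindricity criterion of Theorem~\ref{thm: acyl=kcacyl} for \emph{every} choice of constants $k$ and $C$, by producing edge paths of arbitrary prescribed length whose pointwise stabilizers, although finite, have unbounded cardinality. By Theorem~\ref{thm: acyl=kcacyl} this is equivalent to non-acylindricity.

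First I would record the stabilizer data along the lifted ray $\tilde{\Gamma}_1=(\tilde{v}_0,\tilde{e}_0,\tilde{v}_1,\tilde{e}_1,\dots)$ sitting inside $X_{p,q}$. By the Bass--Serre construction of Subsection~\ref{subsection: construction of BS tree}, $\mbox{Stab}(\tilde{v}_k)=G_{v_k}$ and $\mbox{Stab}(\tilde{e}_k)=G_{e_k}$, realized inside the two adjacent vertex stabilizers via $\alpha_{\overline{e}_k}$ and $\alpha_{e_k}$. The key point is that for every $k\geq 1$ the edge $e_k$ is collapsible: $\alpha_{\overline{e}_k}(G_{e_k})=G_{v_k}$, so the inclusion $\mbox{Stab}(\tilde{e}_k)\leq\mbox{Stab}(\tilde{v}_k)$ is an equality, whereas $\alpha_{e_k}(G_{e_k})$ has index $|n|$ in $G_{v_{k+1}}$. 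Consequently the stabilizers of the vertices and edges encountered while moving \emph{up} the ray from level $1$ onward form an increasing chain
\[ G_{v_1}\ \leq\ G_{v_2}\ \leq\ G_{v_3}\ \leq\ \cdots , \]
with $|G_{v_k}|=2\,|n|^{k}\,|n-1|\to\infty$ as $k\to\infty$.

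Next I would compute the pointwise stabilizer of an ascending segment. Fix $k\geq 1$ and $\ell\geq 1$, and let $P$ be the edge path of length $\ell$ joining $\tilde{v}_k$ to $\tilde{v}_{k+\ell}$ along $\tilde{\Gamma}_1$; since $\tilde{\Gamma}_1$ is embedded in $X_{p,q}$, the vertices on $P$ are exactly $\tilde{v}_k,\dots,\tilde{v}_{k+\ell}$. An element fixing $P$ pointwise lies in $\bigcap_{j=k}^{k+\ell}\mbox{Stab}(\tilde{v}_j)$, and by the increasing chain above this intersection is exactly $G_{v_k}$. Thus $P$ has length $\ell$ yet $|\mbox{PStab}(P)|=2\,|n|^{k}\,|n-1|$. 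Now, given any hypothetical constants $k\geq 0$ and $C\geq 1$ witnessing $(k,C)$-acylindricity, set $\ell=\max\{k,1\}$ and pick a level $m\geq 1$ with $2\,|n|^{m}\,|n-1|>C$; the path of length $\ell\geq k$ joining $\tilde{v}_m$ to $\tilde{v}_{m+\ell}$ then has more than $C$ elements in its pointwise stabilizer, a contradiction. Hence no pair $(k,C)$ works, and $\mbox{Out}(G_{p,q})\curvearrowright X_{p,q}$ is not acylindrical.

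The argument is short, and there is essentially one point that needs care rather than a genuine obstacle: one must be sure that passing to a \emph{longer} ascending segment does not shrink the pointwise stabilizer below $G_{v_k}$. This is exactly guaranteed by the collapsibility of the edges $e_j$ for $j\geq 1$ (equivalently, by the direct-limit description of the upper-end stabilizer in Subsection~\ref{subsection: two graphs of Out(BS)}); without it, the intersection of the vertex stabilizers along the segment could be a proper bounded subgroup and the comparison with $C$ would break down.
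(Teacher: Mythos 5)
Your proof is correct and rests on the same core observation as the paper's: the vertex stabilizers along the lifted ray form an increasing chain $G_{v_1}\leq G_{v_2}\leq\cdots$ of finite groups of unbounded order (because $\alpha_{\overline{e}_k}(G_{e_k})=G_{v_k}$ for $k\geq 1$), so arbitrarily distant pairs of vertices have common stabilizer $G_{v_k}$ with $|G_{v_k}|\to\infty$. The only difference is cosmetic: you phrase the contradiction via the $(k,C)$-acylindricity criterion of Theorem \ref{thm: acyl=kcacyl}, while the paper argues directly from Definition \ref{acyldef} with $\varepsilon=\tfrac12$.
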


\begin{proof}
By definition \ref{acyldef}, we need to show that there exists an $\varepsilon > 0$ such that for any $R, N >0$, there exists $x,y$ in $X_{p,q}$ with $d(x,y)\geq R$ and
    \[|\{ g \in \mbox{Out}(G_{p,q}) \hspace{0.1cm}|\hspace{0.1cm} d(x, gx) \leq \varepsilon, \hspace{0.1cm} d(y, gy) \leq \varepsilon \}|  > N\]

Consider $\varepsilon = \frac{1}{2}$. Let $R, N>0$ be fixed. We can choose $k>0$ such that $|n^k(n-1)|>N$. Let $\tilde{v}_l$ be a vertex in $\tilde{\Gamma}_1$ such that $l>k$ and $d(\tilde{v}_l, \tilde{v}_k)\geq R$. Then, by choosing $x = \tilde{v}_l$ and $y = \tilde{v}_k$, we have
  
  \[|\{ g \in \mbox{Out}(G_{p,q})\hspace{0.1cm} |\hspace{0.1cm} d(\tilde{v}_l, g\tilde{v}_l) \leq \varepsilon, \hspace{0.1cm} d(\tilde{v}_k, g\tilde{v}_k) \leq \varepsilon \}| = |G_{v_k}|>|n^k(n-1)| > N\]
Hence the action is not acylindrical.
\end{proof}

We next turn toward a brief discussion of weakly properly discontinuously action (WPD action). The definition of this type of action is essentially due to Bestvina and Fujiwara \cite{BestvinaFujiwaraWPD}. However, to obtain clearer and simpler proofs, we will follow an equivalent definition in \cite{acylactionontrees}.

\begin{definition}\cite[Definition 3.5]{acylactionontrees}\label{WPDdef}
An element $h$ of a group $G$ acting isometrically on a metric space $S$ satisfies the \textit{weak proper discontinuity} condition (or $h$ is a \textit{WPD element}) if for some $s\in S$ (or, equivalently, for all $s\in S$) and every $\varepsilon\geq 0$, there is an $M \geq 0$ such that
\[|\{g \in G| d(s, gs) \leq \varepsilon, \hspace{0.1cm} \hspace{0.1cm}  d(h^Ms, gh^Ms) \leq \varepsilon \}| < \infty\]
\end{definition}

\begin{definition}
An action of a group $G$ on a hyperbolic space $S$ isometrically is called \textit{non-elementary WPD} if there are at least two independent hyperbolic elements that satisfy the WPD condition.
\end{definition}

The following theorem by Osin gives us different ways to prove a group to be acylindrically hyperbolic.

\begin{theorem} \cite[Theorem 1.2]{acylhypgps}\label{acylhypgpseq}
For any group $G$, the following are equivalent.
\begin{enumerate}
\item There exists a generating set $X$ of $G$ such that corresponding Cayley graph $\Gamma(G, X)$ is hyperbolic, $|\Gamma(G, X)|>2$, and the natural action of $G$ on $\Gamma(G, X)$ is acylindrical.
\item $G$ admits a non-elementary acylindrical action on a hyperbolic space.
\item $G$ is not virtually cyclic and admits an action on a hyperbolic space such that at least one element of $G$ is loxodromic and satisfies the WPD condition.
\item $G$ contains a proper infinite hyperbolically embedded subgroup.
\end{enumerate}
\end{theorem}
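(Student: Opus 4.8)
The plan is to prove the cycle of implications $(1)\Rightarrow(2)\Rightarrow(3)\Rightarrow(4)\Rightarrow(1)$. The first two implications are soft and follow from the trichotomy already recorded in Theorem \ref{acylhypgrpclass}, whereas the last two rest on the theory of hyperbolically embedded subgroups and very rotating families developed by Dahmani, Guirardel and Osin \cite{DGO}.

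For $(1)\Rightarrow(2)$ I would argue as follows. The natural action of $G$ on a Cayley graph $\Gamma(G,X)$ is always cobounded, so by Theorem \ref{acylhypgrpclass} it is either bounded, virtually cyclic containing a loxodromic, or non-elementary. Boundedness would force $\Gamma(G,X)$ to have finite diameter, and the virtually cyclic case would force its Gromov boundary to consist of at most two points; the hypothesis $|\partial\Gamma(G,X)|>2$ rules both of these out, so we are in the non-elementary case, which is exactly (2). For $(2)\Rightarrow(3)$: a non-elementary acylindrical action contains infinitely many independent loxodromics by Theorem \ref{acylhypgrpclass}, so in particular $G$ is not virtually cyclic; and acylindricity is precisely a uniform version of the WPD condition of Definition \ref{WPDdef}. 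Concretely, given $\varepsilon$ take the constants $R,N$ furnished by Definition \ref{acyldef}; for a loxodromic element $h$ and a basepoint $s$ one may pick $M$ with $d(s,h^Ms)\geq R$, and then the set appearing in Definition \ref{WPDdef} has at most $N$ elements. Hence every loxodromic element of $G$ is WPD, which gives (3).

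For $(3)\Rightarrow(4)$, given a loxodromic WPD element $h$ for an action $G\curvearrowright S$ on a hyperbolic space, I would form its \emph{elementary closure} $E(h)$, the maximal subgroup of $G$ in which $\langle h\rangle$ has finite index. The WPD hypothesis guarantees that $E(h)$ is well defined and virtually cyclic, hence infinite, and it is proper in $G$ because $G$ is not virtually cyclic. The substantive point is then to show that $E(h)$ is hyperbolically embedded in $G$: one chooses a suitable relative generating set and checks, using the WPD displacement estimates along a quasi-geodesic axis of $h$ in $S$, that the induced relative metric on $E(h)$ is locally finite. This is carried out in \cite{DGO}. For $(4)\Rightarrow(1)$: from a proper infinite hyperbolically embedded subgroup $H\hookrightarrow_h(G,X)$ one passes to the relative Cayley graph $\Gamma(G,X\sqcup H)$, which is hyperbolic by the theory of \cite{DGO}; one then verifies that the $G$-action on it is acylindrical, and that since $H$ is infinite and $H\neq G$ its Gromov boundary has more than two points, so $|\partial\Gamma(G,X\sqcup H)|>2$. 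This yields (1).

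The step I expect to be the main obstacle is $(3)\Rightarrow(4)$. Promoting a single WPD loxodromic element to a genuine hyperbolically embedded subgroup requires a delicate separation and displacement analysis along the axis of $h$, together with a proof that the relative metric on $E(h)$ is proper; this is the technical heart of the matter and the place where the full strength of \cite{DGO} is needed. By contrast, the remaining three implications amount to bookkeeping built on top of the classification in Theorem \ref{acylhypgrpclass}.
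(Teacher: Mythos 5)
This statement is not proved in the paper at all: it is quoted verbatim as \cite[Theorem 1.2]{acylhypgps} (Osin's characterization of acylindrically hyperbolic groups) and used as a black box, so there is no internal proof to compare your attempt against. Measured against Osin's published argument, your outline follows essentially the same cycle $(1)\Rightarrow(2)\Rightarrow(3)\Rightarrow(4)\Rightarrow(1)$ and correctly locates the two soft implications (the first via the trichotomy of Theorem \ref{acylhypgrpclass}, after reading the condition $|\Gamma(G,X)|>2$ as the intended $|\partial\Gamma(G,X)|>2$, and the second via the standard observation that acylindricity is a uniform WPD condition) as well as the two hard ones, which genuinely rest on \cite{DGO}: the elementary closure $E(h)$ of a WPD loxodromic is hyperbolically embedded, and a proper infinite hyperbolically embedded subgroup yields the Cayley graph in $(1)$. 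Two cautions on the last step, where your sketch is glibber than the actual argument: the action of $G$ on $\Gamma(G,X\sqcup H)$ need \emph{not} be acylindrical for the given relative generating set $X$ --- Osin's Theorem 5.4 shows one must first enlarge $X$ to a suitable $Y\supseteq X$ with $H$ still hyperbolically embedded in $(G,Y)$ --- and the assertion that $|\partial\Gamma(G,Y\sqcup H)|>2$ does not follow merely from $H$ being infinite and proper (note $H$ itself has bounded orbits in the relative Cayley graph); one must actually produce independent loxodromic elements, again via the machinery of \cite{DGO}. As a roadmap deferring those points to the literature, your proposal is accurate; as a self-contained proof it is not, but neither is the paper's treatment, which simply cites the result.
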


It is clear that a non-elementary acylindrical action is a non-elementary WPD action. However the converse is not true in general. We have shown in Proposition \ref{NotAcyl} that the action $\mbox{Out}(G_{p,q})\curvearrowright X_{p,q}$ is not acylindrical. Next, we show that this action is in fact a non-elementary WPD action.

\begin{lemma}\label{lemma: WPD condition}
For any level $0$-vertex $w$ in $X_{p,q}$, the set $\{g\in \mbox{Out}(G_{p,q})\hspace{0.1cm} | \hspace{0.1cm} g\tilde{v}_0=w\}$ is finite.
\end{lemma}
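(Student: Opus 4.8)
The plan is to observe that $\{g\in\mbox{Out}(G_{p,q})\mid g\tilde{v}_0=w\}$ is a left coset of the stabilizer of $\tilde{v}_0$, and that this stabilizer is the finite vertex group $G_{v_0}$.

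First I would unwind the definition of a level $0$-vertex: by definition $w$ is a translate $w=h\tilde{v}_0$ for some $h\in\mbox{Out}(G_{p,q})$, so the set in question is nonempty. Fixing such an $h$, an element $g$ satisfies $g\tilde{v}_0=w=h\tilde{v}_0$ if and only if $h^{-1}g$ fixes $\tilde{v}_0$, so
\[
\{g\in\mbox{Out}(G_{p,q})\mid g\tilde{v}_0=w\}=h\cdot\mbox{Stab}_{\mbox{Out}(G_{p,q})}(\tilde{v}_0).
\]
Hence it suffices to show that $\mbox{Stab}_{\mbox{Out}(G_{p,q})}(\tilde{v}_0)$ is finite.

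Next I would apply Bass-Serre theory. Since $\tilde{v}_0$ is the chosen lift in $\tilde{\Gamma}_1$ of the vertex $v_0$ of the quotient graph $\Gamma_1$, property (c) of the Bass-Serre tree construction (Subsection~\ref{subsection: construction of BS tree}) identifies $\mbox{Stab}_{\mbox{Out}(G_{p,q})}(\tilde{v}_0)$ with the vertex group $G_{v_0}$. As recalled in Subsection~\ref{subsection: two graphs of Out(BS)}, $G_{v_0}\cong\mathbb{Z}_{p|n-1|}\rtimes\mathbb{Z}_2$, which is finite of order $2p|n-1|$ (finite since $p,|n|>1$). Therefore $h\cdot G_{v_0}$ is finite, and the lemma follows.

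I do not anticipate a genuine obstacle here; the one point meriting a moment's care is that ``level $0$-vertex'' coincides with ``vertex in the $\mbox{Out}(G_{p,q})$-orbit of $\tilde{v}_0$,'' which is immediate from the definition of level given before the lemma together with the fact that $v_0$ is a single vertex of the quotient graph $\Gamma_1$. This lemma is the base observation for verifying the weak proper discontinuity condition of Definition~\ref{WPDdef} for the (non-acylindrical, by Proposition~\ref{NotAcyl}) action $\mbox{Out}(G_{p,q})\curvearrowright X_{p,q}$: since $X_{p,q}$ is locally finite and all its vertex stabilizers are finite, a bounded-radius subset meets only finitely many translates of $\tilde{v}_0$, which will bound the relevant sets in Definition~\ref{WPDdef} for suitable hyperbolic elements.
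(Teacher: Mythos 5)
Your proof is correct and reaches the same conclusion by essentially the same mechanism as the paper: both arguments reduce to the observation that $\{g \mid g\tilde{v}_0 = w\}$ is a single left coset of the finite vertex group $G_{v_0}$ (the paper exhibits the coset representative explicitly via the normal form $g_1f_1\cdots g_mf_mG_{v_0}$ from \cite{Bass}, whereas you invoke the orbit--stabilizer correspondence together with property (c) of the Bass--Serre construction). Your route is slightly more streamlined, and your check that ``level $0$'' means ``in the orbit of $\tilde{v}_0$'' is the right point to flag.
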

\begin{proof}
For simplicity, we denote $S_w=\{g\in \mbox{Out}(G_{p,q})\hspace{0.1cm} | \hspace{0.1cm} g\tilde{v}_0=w\}$. Let $m=d(w,\tilde{v}_0)$. As discussed in the proof of Theorem 1.17 of \cite{Bass}, $w$ can be uniquely represented by $g_1f_1g_2f_2...g_mf_mG_{v_0}$ where $(f_1,f_2,...,f_m)$ is a reduced edge path in $\Gamma_1$ with $o(f_1)=t(f_m)=v_0$, and $g_i$ is contained in the transversal set $\Sigma_{f_i}$ for $G_{o(f_i)}/\alpha_{\overline{f}_i}(G_{f_i})$. If $g\in S_w$, we have $gG_{v_0}=g_1f_1g_2f_2...g_mf_mG_{v_0}$ which implies that $g\in g_1f_1g_2f_2...g_mf_mG_{v_0}$.
Since every vertex group $G_{v_k}$ and every edge group $G_{e_k}$ in $\Gamma_1$ is finite, then there are only finitely many choices for $\Sigma_{f_i}$ and each $|\Sigma_{f_i}|$ is finite. Thus $g$ has only finitely many choices, which implies that $S_w$ is finite.
\end{proof}

\begin{proposition}\label{prop: WPD action}
The action $\mbox{Out}(G_{p,q})\curvearrowright X_{p,q}$ is a non-elementary WPD action.
\end{proposition}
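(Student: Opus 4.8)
The plan is to establish the two ingredients of a non-elementary WPD action: that $\mbox{Out}(G_{p,q})\curvearrowright X_{p,q}$ admits two independent hyperbolic elements, and that each of them satisfies the weak proper discontinuity condition of Definition \ref{WPDdef}. The hyperbolic elements I would import from the action on $T_{p,q}$. By Corollary \ref{acylActionOnTpq} the action $\mbox{Out}(G_{p,q})\curvearrowright T_{p,q}$ is non-elementary, so fix two independent hyperbolic elements $h_1,h_2$ for it. As recalled in Subsection \ref{subsection: two graphs of Out(BS)}, the graph $\Gamma_2$ is obtained from $\Gamma_1$ by collapsing the collapsible edges $e_1,e_2,\dots$, and at the level of Bass-Serre trees this gives an $\mbox{Out}(G_{p,q})$-equivariant collapse map $c\colon X_{p,q}\to T_{p,q}$ that contracts to a point each connected component of the subforest of $X_{p,q}$ spanned by the vertices of level $\ge 1$; under $c$ the level-$0$ vertices of $X_{p,q}$ go bijectively to the valence-$p$ vertices of $T_{p,q}$ and the contracted components to the infinite-valence vertices.

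Since a collapse map takes elliptic elements to elliptic elements, $h_1$ and $h_2$ remain hyperbolic for the action on $X_{p,q}$; and since a collapse map takes a geodesic to a geodesic, $c$ maps $\mbox{Axis}_{X_{p,q}}(h_i)$ onto $\mbox{Axis}_{T_{p,q}}(h_i)$, contracting only the finite sub-arcs of this axis that run through contracted components. In particular $\mbox{Axis}_{X_{p,q}}(h_i)$ meets infinitely many level-$0$ vertices, for otherwise its image would be bounded and $h_i$ elliptic in $T_{p,q}$. Next I would check independence in $X_{p,q}$: if $\mbox{Axis}_{X_{p,q}}(h_1)$ and $\mbox{Axis}_{X_{p,q}}(h_2)$ shared an end, their intersection would contain a ray $r$, and $r$ would pass through infinitely many level-$0$ vertices (its end is an end of $\mbox{Axis}_{X_{p,q}}(h_1)$, hence not trapped in a contracted component); since $c$ is injective on level-$0$ vertices, $c(r)$ would be an unbounded subset of $\mbox{Axis}_{T_{p,q}}(h_1)\cap\mbox{Axis}_{T_{p,q}}(h_2)$, contradicting the independence of $h_1,h_2$ in $T_{p,q}$. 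Hence $h_1$ and $h_2$ are independent hyperbolic elements for $\mbox{Out}(G_{p,q})\curvearrowright X_{p,q}$.

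For the WPD condition I would test Definition \ref{WPDdef} at the basepoint $s=\tilde{v}_0$. Given $\varepsilon\ge 0$, any $g$ with $d(\tilde{v}_0,g\tilde{v}_0)\le\varepsilon$ sends $\tilde{v}_0$ to a level-$0$ vertex (the action preserves levels) lying in the $\varepsilon$-ball about $\tilde{v}_0$, of which there are only finitely many because $X_{p,q}$ is locally finite; for each such target $w$ the set $\{g : g\tilde{v}_0=w\}$ is finite by Lemma \ref{lemma: WPD condition}, so $\{g : d(\tilde{v}_0,g\tilde{v}_0)\le\varepsilon\}$ is finite. Thus Definition \ref{WPDdef} is satisfied already with $M=0$, and every element of $\mbox{Out}(G_{p,q})$ — in particular $h_1$ and $h_2$ — is a WPD element. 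Having exhibited two independent hyperbolic WPD elements, we conclude the action is non-elementary WPD.

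I expect the delicate point to be the transfer of hyperbolicity and, above all, independence from $T_{p,q}$ to $X_{p,q}$ through the collapse map: one must argue carefully that $c$ contracts only finite sub-arcs of each axis and is injective on the level-$0$ vertices those axes traverse, so that no pair of axis endpoints gets identified. An alternative route, bypassing $c$ entirely, would be to write down two explicit hyperbolic elements of $\mbox{Out}(G_{p,q})$ (analogous to the elements $\psi\phi_i$, $\psi\phi_j$ appearing in Figure \ref{fig: T24}) and describe their axes directly inside $X_{p,q}$; this is more computational but avoids any boundary considerations. By contrast the WPD half is essentially automatic once Lemma \ref{lemma: WPD condition} and the local finiteness of $X_{p,q}$ are available, since together they say the action is metrically proper at the level-$0$ vertices.
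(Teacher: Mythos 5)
Your proof is correct in substance, and its WPD half is the same as the paper's: both reduce Definition \ref{WPDdef} to the finiteness of $\{g \in \mbox{Out}(G_{p,q}) \mid d(\tilde{v}_0, g\tilde{v}_0)\le\varepsilon\}$, which follows from local finiteness of $X_{p,q}$ together with Lemma \ref{lemma: WPD condition}. Where you genuinely diverge is in producing the two independent hyperbolic elements. The paper does this directly: it takes the explicit pairs $\psi\phi_i,\psi\phi_j$ ($i\neq j\geq 1$) and invokes the argument of Proposition \ref{prop: amalgamationacyl} --- precisely the ``alternative route'' you mention at the end. You instead transfer independence from $T_{p,q}$ (Corollary \ref{acylActionOnTpq}) through the equivariant collapse map $c\colon X_{p,q}\to T_{p,q}$. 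That route works, but the step you flag as delicate does need an argument, which you should record: if an end of $\mbox{Axis}_{X_{p,q}}(h)$ were eventually trapped in a single component $K$ of the collapsed subforest, then $hK\cap K\neq\emptyset$, hence $hK=K$ since $h$ permutes the components, hence $h$ fixes the vertex $c(K)$ of $T_{p,q}$, contradicting hyperbolicity of $h$ on $T_{p,q}$; this is what guarantees that each axis crosses infinitely many level-$0$ vertices in both directions and that $c$ collapses only finite sub-arcs of it. With that inserted, your comparison of ends via injectivity of $c$ on level-$0$ vertices is sound. The trade-off: the paper's explicit elements give a shorter, self-contained proof, while your transfer argument is softer, applies to any pair of independent hyperbolic elements of $T_{p,q}$, and essentially re-proves part (3) of Theorem \ref{thmE} along the way.
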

\begin{proof}
For any $\varepsilon \geq 0$. Let $s=\tilde{v}_0\in X_{p,q}$. We first note that, similar to the proof of Proposition \ref{prop: amalgamationacyl}, every pair of elements $\psi\phi_i$ and $\psi\phi_j$ for $i,j\geq 1$ and $i\neq j$ contributes to a pair of independent hyperbolic elements for the action $\mbox{Out}(G_{p,q})\curvearrowright X_{p,q}$. We then claim that each $\psi\phi_i$ satisfies WPD condition. By Definition \ref{WPDdef}, it suffices to show that
\[|\{g \in \mbox{Out}(G_{p,q})\hspace{0.1cm}|\hspace{0.1cm} d(\tilde{v}_0, g\tilde{v}_0) \leq \varepsilon \}| < \infty\]
Note that there are only finitely many translates of $\tilde{v}_0$ inside a $\varepsilon$-neighborhood of $\tilde{v}_0$. Thus, by Lemma \ref{lemma: WPD condition}, the set $\{g \in \mbox{Out}(G_{p,q})\hspace{0.1cm}|\hspace{0.1cm} d(\tilde{v}_0, g\tilde{v}_0) \leq \varepsilon \}$ is finite. This completes the proof.
\end{proof}

We remark that, by Theorem \ref{acylhypgpseq}, the proposition above provides an alternative proof of acylindrical hyperbolicity of Out$(\mbox{BS}(p,q))$. We are ready now to prove our last main result, Theorem \ref{thmE}, which can be restated as follows.
\begin{theorem}
Let $G=Out(G_{p,q})$ where $q=pn$ for some $p,|n|>1$. Then we have the following results:
\begin{enumerate}
    \item $G \curvearrowright X_{p,q}$ is not an acylindrical action while $G \curvearrowright T_{p,q}$ is non-elementary acylindrical, thus $G \curvearrowright X_{p,q}$ is not equivalent to $G \curvearrowright T_{p,q}$ ;
    \item $G \curvearrowright X_{p,q}$ and $G \curvearrowright T_{p,q}$ are both non-elementary WPD actions;
    \item $G \curvearrowright X_{p,q}$ and $G \curvearrowright T_{p,q}$ have the same set of hyperbolic elements;
    \item $G \curvearrowright T_{p,q}$ is the largest acylindrical hyperbolic action.  
\end{enumerate}
\end{theorem}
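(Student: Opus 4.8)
The plan is to deduce the four parts mostly from results already established above, treating them in the order (1), (2), (4), (3), since (3) is the only one requiring a genuinely new argument.

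For (1), I would simply combine Corollary \ref{acylActionOnTpq} (which gives that $G\curvearrowright T_{p,q}$ is non-elementary acylindrical) with Proposition \ref{NotAcyl} (which gives that $G\curvearrowright X_{p,q}$ is not acylindrical). Equivalence of actions preserves acylindricity --- this is the fact, recalled in the proof of Theorem \ref{thm: largestaction}, that acylindricity is invariant under coarsely $G$-equivariant quasi-isometries --- so the two actions are inequivalent. For (2), the action $G\curvearrowright X_{p,q}$ is handled by Proposition \ref{prop: WPD action}, and for $G\curvearrowright T_{p,q}$ I would observe that any non-elementary acylindrical action is a fortiori non-elementary WPD: Theorem \ref{acylhypgrpclass}(3) supplies (infinitely many) independent hyperbolic elements, and for each hyperbolic $h$ the uniform bounds in the definition of acylindricity give, upon taking a point $s$ and an exponent $M$ with $d(s,h^{M}s)$ large, exactly the finiteness required in Definition \ref{WPDdef}. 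Combined with Corollary \ref{acylActionOnTpq} this proves (2) for $T_{p,q}$. Part (4) is exactly Proposition \ref{prop: Tpq largest}.

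The substantive part is (3). I would first recall that, for any action of a group on a tree, an element is elliptic exactly when it is conjugate into a vertex group of the quotient graph of groups, and hyperbolic otherwise; thus it suffices to show that $G\curvearrowright X_{p,q}$ and $G\curvearrowright T_{p,q}$ have the same elliptic elements. One inclusion is easy: the vertex groups of $(\Gamma_1,\mathcal{G}_1)$ are $G_{v_0}=A$ and, for $k\geq 1$, the groups $G_{v_k}$, each of which is contained in $B$; since $A$ and $B$ are precisely the vertex groups of $(\Gamma_2,\mathcal{G}_2)$, every element elliptic on $X_{p,q}$ is elliptic on $T_{p,q}$. For the converse I would use the description of $B$ from subsection \ref{subsection: two graphs of Out(BS)} as the increasing union $\bigcup_{k\geq 1}G_{v_k}$ (the inclusions being given by $\phi_{k+1}^{n}=\phi_k$): an element conjugate into $B$ lies, after conjugation, in some single $G_{v_k}$, hence fixes the lift $\tilde v_k$ in $X_{p,q}$; an element conjugate into $A=G_{v_0}$ fixes $\tilde v_0$. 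In either case it is elliptic on $X_{p,q}$. Since ellipticity is a conjugacy invariant, the elliptic sets coincide, and therefore so do the hyperbolic sets.

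The only place where care is needed is the converse in (3): one must keep track of the conjugating elements and genuinely use that $B$ is the \emph{increasing} union of the $G_{v_k}$, so that \emph{every} element of $B$ --- not merely those in some fixed $G_{v_k}$ --- is elliptic on $X_{p,q}$. I do not anticipate any real obstacle beyond this bookkeeping; the geometric content, namely that collapsing the collapsible edges $e_k$ ($k\geq 1$) turns $X_{p,q}$ into $T_{p,q}$ via a $G$-equivariant forest collapse and so cannot destroy a hyperbolic element, is already implicit in the construction of the two graph-of-groups structures, and the elliptic-side argument above makes it precise in both directions.
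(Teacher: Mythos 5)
Your proposal is correct, and for parts (1), (2), and (4) it is essentially identical to the paper's proof: those parts are quoted directly from Corollary \ref{acylActionOnTpq}, Proposition \ref{NotAcyl}, Proposition \ref{prop: Tpq largest}, and Proposition \ref{prop: WPD action}, together with the observation (stated in the paper just before Proposition \ref{prop: WPD action}) that a non-elementary acylindrical action is automatically non-elementary WPD. The only genuine divergence is in part (3). The paper argues geometrically: $T_{p,q}$ is obtained from $X_{p,q}$ by a sequence of $G$-equivariant collapse moves, and since elementary deformations stay within a deformation space --- whose very definition is that all its trees have the same elliptic subgroups --- the elliptic (hence hyperbolic) sets coincide. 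You instead argue algebraically, via the vertex groups of the two graph-of-groups decompositions: the vertex groups of $(\Gamma_1,\mathcal{G}_1)$ are $A=G_{v_0}$ and the $G_{v_k}\leq B$, while those of $(\Gamma_2,\mathcal{G}_2)$ are $A$ and $B=\bigcup_{k\geq 1}G_{v_k}$, and the increasing-union structure guarantees that every element conjugate into $B$ already lies in a conjugate of some single $G_{v_k}$ and so fixes a vertex of $X_{p,q}$. Both arguments are valid; yours makes explicit the bookkeeping that the paper's appeal to collapse moves leaves implicit, at the cost of invoking the specific presentation of $B$ as a direct limit, whereas the paper's version would transfer verbatim to any pair of trees related by equivariant collapses. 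Your care about the increasing-union point is exactly the right thing to worry about, since it is what rules out an element of $B$ that is elliptic on $T_{p,q}$ but escapes every finite level of $X_{p,q}$.
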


\begin{proof}

(1), (2) and (4) follows directly from Corollary \ref{acylActionOnTpq}, Proposition \ref{NotAcyl}, Proposition \ref{prop: Tpq largest} and Proposition \ref{prop: WPD action}. For (3), we note that $T_{p,q}$ can be obtained from $X_{p,q}$ by a sequence of $G$-equivariant collapse moves. This implies that an element $g\in G$ is elliptic in $G\curvearrowright X_{p,q}$ if and only if $g$ is elliptic in $G\curvearrowright T_{p,q}$. Thus the two actions $G \curvearrowright X_{p,q}$ and $G \curvearrowright T_{p,q}$ have the same set of elliptic elements. It follows that they have the same set of hyperbolic elements.

\end{proof}

\bibliographystyle{alpha}
\bibliography{citations}

\end{document}